\documentclass[12pt]{amsart}

\usepackage{amsmath,amssymb,amsthm}
\usepackage{amscd}
\usepackage{color}

\setlength{\oddsidemargin}{25pt} \setlength{\evensidemargin}{25pt}
\setlength{\textwidth}{420pt} \setlength{\textheight}{610pt}
\setlength{\topmargin}{10pt}
\setlength{\parskip}{.15cm} \setlength{\baselineskip}{.5cm}

\newcommand{\QQ}{\mathbb{Q}}

\newcommand{\NN}{\mathbb{N}}

\newcommand{\CC}{\mathbb{C}}

\newcommand{\PP}{\mathbb{P}}

\newcommand{\ZZ}{\mathbb{Z}}

\newcommand{\la}{\langle}
\newcommand{\ra}{\rangle}

\newcommand{\CP}{\CC P}

\newcommand{\orb}{\mathrm{orb}}

\theoremstyle{plain}
\newtheorem{proposition}{Proposition}

\newtheorem{theorem}[proposition]{Theorem}

\theoremstyle{definition}
\newtheorem{definition}[proposition]{Definition}

\theoremstyle{remark}
\newtheorem{remark}[proposition]{Remark}
\newtheorem{question}{Question}

\title{Negative Sasakian structures on simply-connected $5$-manifolds}

\author[V. Mu\~{n}oz]{Vicente Mu\~{n}oz}
\address{Departamento de \'Algebra, Geometr\'ia y Topolog\'ia, Facultad de Ciencias, Universidad de M\'alaga, Campus de Teatinos s/n, 29071 Málaga, Spain}\email{vicente.munoz@uma.es}

\author[M. Sch\"utt]{Matthias Sch\"utt}
\address{Institut f\"ur Algebraische Geometrie, Gottfried Wilhelm Leibniz Universit\"at Hannover, Welfengarten 1, 30167 Hannover, Germany}
\email{schuett@math.uni-hannover.de}

\author[A. Tralle]{Aleksy Tralle}
\address{Faculty of Mathematics and Computer Science, University of Warmia and Mazury, S\l\/oneczna 54, 10-710 Olsztyn, Poland}
\email{tralle@matman.uwm.edu.pl}

\subjclass[2010]{53C25, 53D35, 14J27, 14J17}

\keywords{Sasakian, Smale-Barden manifold, Seifert bundle, singular surface}

\begin{document}

\begin{abstract}
We study several questions on the existence of negative Sasakian structures on simply connected rational homology spheres and on Smale-Barden manifolds of the form $\#_k(S^2\times S^3)$. First, we prove that any simply connected rational homology sphere admitting positive Sasakian structures also admits a negative one. This result answers the question, posed by Boyer and Galicki in their book \cite{BG}, of determining 
which simply connected rational homology spheres admit both negative and positive Sasakian structures. Second, we prove that the connected sum $\#_k(S^2\times S^3)$ admits negative quasi-regular Sasakian structures for any $k$. This yields a complete answer to another question posed in \cite{BG}. 
\end{abstract}  

\maketitle

\section{Introduction}
This work deals with Sasakian manifolds. The basic definitions and facts regarding this structure are given in Section \ref{sec:basic-def}. 
Recall that the Reeb vector field $\xi$ on a co-oriented contact manifold $(M,\eta)$ determines a $1$-dimensional foliation 
$\mathcal{F}_{\xi}$ called the {\it characteristic foliation}. If we are given a  manifold $M$ with a Sasakian structure $(\eta,\xi,\phi, g)$, then 
one can define  {\it basic Chern classes} $c_k(\mathcal{F}_{\xi})$ of $\mathcal{F}_{\xi}$ which are elements of the 
basic cohomology $H^{2k}_B(\mathcal{F}_{\xi})$ (see \cite[Theorem/Definition 7.5.17]{BG}).     
We say that a Sasakian structure is positive (negative) if $c_1(\mathcal{F}_{\xi})$ can be represented by a positive 
(negative) definite $(1,1)$-form. A Sasakian structure is called null, if $c_1(\mathcal{F}_{\xi})=0$. If none of these, it is called indefinite. 

The following problems are formulated in the seminal book on Sasakian geometry of Boyer and Galicki \cite{BG}.

\begin{question}[{\cite[Open Problems 10.3.3 and 10.3.4]{BG}}]\label{quest:bg-main}  
Which simply connected rational homology $5$-spheres admit negative Sasakian structures?
\end{question}

To put this question into a broader context, let us recall the foundational results of Koll\'ar \cite{K}, which yield
the full structural description of rational homology spheres $M$ with $H_1(M,\ZZ)=0$ and  admitting {\it positive} Sasakian structures.

\begin{theorem}[{\cite[Theorem 9.1]{K}}]  \label{thm:Ko}
Let $M\rightarrow (X,\Delta)$ be a $5$-dimensional Seifert bundle, with $M$ smooth, where $\Delta =\sum \big(1-\frac{1}{m_i}\big) D_i$ is the
branch divisor. Then
\begin{enumerate}
\item If $M$ is a rational homology sphere with $H_1(M,\ZZ)=0$, then
 \begin{enumerate}
 \item $X$ has only cyclic quotient singularities, and $H_2(X,\ZZ)=\operatorname{Weil}(X)\cong\ZZ$,
 \item $D_i$ are orbismooth curves, intersecting transversally,
 \item $m_i$ are coprime, and each $m_i$ is coprime with the degree of $D_i$.
 \end{enumerate}
\item Conversely, given any $(X,\Delta)$ satisfying \emph{(a),\,(b),\,(c),} 
there is a unique Seifert bundle $M\rightarrow X$ such that $M$ is a rational homology sphere with $H_1(M,\ZZ)=0$.
\end{enumerate}
\end{theorem}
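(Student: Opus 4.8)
The plan is to exploit the structure of the Seifert bundle $f\colon M\to X$ as an $S^1$-orbibundle, combining a \emph{local} analysis — local models for $M$ over each point of $X$, which control the singularities of $X$ and the geometry of the branch divisors — with a \emph{global} analysis — the Gysin (Leray) spectral sequence of $f$, run first over $\QQ$ and then over $\ZZ$, which converts the two hypotheses ``$M$ is a rational homology sphere'' and ``$H_1(M,\ZZ)=0$'' into cohomological and arithmetic constraints on $(X,\Delta)$. Part (2) is then obtained by reversing this analysis together with a rigidity argument.

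\emph{Local analysis.} Wherever the $S^1$-action is free, $f$ is an honest circle bundle, so $M$ smooth forces $X$ smooth there; hence every singular point of $X$ lies on an orbit with nontrivial — necessarily finite cyclic — isotropy $\ZZ_m$, and slicing the action there identifies $X$ locally with $\CC^2/\ZZ_m$, a cyclic quotient singularity. Near a general point of a single $D_i$ the local model is $S^1\times\CC\times\CC$ with $\ZZ_{m_i}$ acting by a rotation on one $\CC$-factor and by a primitive character on $S^1$, so $M$ being a manifold forces $D_i$ to be orbismooth; near a point where two of the $D_i$ meet, smoothness of $M$ forces them to cross transversally and, since Seifert isotropy is cyclic, forces $\gcd(m_i,m_j)=1$. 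This already gives (1)(b), the first coprimality clause of (1)(c), and the ``cyclic quotient singularities'' part of (1)(a).

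\emph{Global analysis.} Since $\dim_\RR X=4$ and $M$ is a closed oriented $5$-manifold, Poincar\'e duality reduces the condition that $M$ be a rational homology sphere to $H^1(M,\QQ)=H^2(M,\QQ)=0$. Inserting this into the Gysin sequence
\[\cdots\to H^{k-2}(X,\QQ)\xrightarrow{\,\cup e\,}H^k(X,\QQ)\to H^k(M,\QQ)\to H^{k-1}(X,\QQ)\to\cdots\]
forces $b_1(X)=0$, $b_2(X)=1$ and the rational (orbifold) Euler class $e$ to generate $H^2(X,\QQ)$. Hodge theory on the projective orbifold $X$ then gives $H^1(X,\cO_X)=H^2(X,\cO_X)=0$, so the exponential sequence yields $\operatorname{Pic}(X)\cong H^2(X,\ZZ)$; combining this with the cycle map $\operatorname{Weil}(X)=\operatorname{Cl}(X)\to H_2(X,\ZZ)$ and the vanishing of torsion in $H_2(X,\ZZ)$ gives $H_2(X,\ZZ)\cong\operatorname{Weil}(X)\cong\ZZ$, completing (1)(a). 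For the remaining integral condition $H_1(M,\ZZ)=0$ one needs the integral form of the Seifert exact sequence, in which $H_1(M,\ZZ)$ is the cokernel of an explicit presentation whose relations couple the Euler number with the orders $m_i$ and the degrees $d_i=D_i\cdot(\text{generator of }H_2(X,\ZZ))$; triviality of this cokernel turns out to be equivalent to $\gcd(m_i,d_i)=1$ for every $i$ together with $\gcd(m_i,m_j)=1$, which gives the last clause of (1)(c) and, at the same time, forces $H_1(X,\ZZ)=0$ — the input that killed the torsion above. I expect this integral bookkeeping — setting up the correct integral spectral sequence / Seifert presentation and extracting exactly the stated coprimality conditions while keeping track of torsion in $H_2(X,\ZZ)$ — to be the main obstacle; the rational and purely local steps are comparatively routine.

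\emph{The converse (2).} Given $(X,\Delta)$ satisfying (a), (b), (c), the pairs $(D_i,m_i)$ prescribe the local monodromy of the sought Seifert bundle, and since $H_2(X,\ZZ)=\operatorname{Weil}(X)\cong\ZZ$ is torsion-free while $H^1(X,\cO_X)=0$ — so that a Seifert bundle over $X$ is rigid once its topological type is fixed — the only remaining freedom is a single rational Euler number. Conditions (b) and (c) are precisely what guarantee that there is a choice of this number making every local model of $M$ smooth, and running the global analysis in reverse shows that for this choice $M$ is a rational homology sphere with $H_1(M,\ZZ)=0$; the same constraints pin the Euler class down uniquely, so the Seifert bundle $M\to X$, and hence $M$, is unique.
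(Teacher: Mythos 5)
This statement is imported verbatim from Koll\'ar \cite{K}; the paper gives no proof of it, so there is nothing internal to compare against. Measured against Koll\'ar's actual argument (and against the machinery the paper does quote, namely Theorem \ref{thm:H1(M)} and Proposition \ref{prop:c1}), your sketch follows essentially the same route: local slice models for the $S^1$-action give the cyclic quotient singularities, orbismoothness, transversality and the coprimality of multiplicities at intersection points; the rational Gysin sequence gives $b_1(X)=0$, $b_2(X)=1$ with the Euler class generating, hence $h^{1,0}=h^{2,0}=0$ and $\operatorname{Weil}(X)\cong H_2(X,\ZZ)\cong\ZZ$; and the integral condition $H_1(M,\ZZ)=0$ reduces, via surjectivity of $\ZZ\cong H^2(X,\ZZ)\to\oplus_i H^2(D_i,\ZZ_{m_i})$, to the full statement (c) by the Chinese remainder theorem. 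Two points deserve correction. First, the \emph{pairwise} coprimality of all the $m_i$ (not just of those whose divisors meet) is a global consequence of the surjectivity condition above, not of local smoothness; you do recover it in your global step, but the attribution in the local step is misleading. Second, and more substantively, your uniqueness argument for (2) is too thin: a Seifert bundle is not determined by ``a single rational Euler number'' once the branch divisor is fixed --- the classifying data also include the local invariants $j_i\in(\ZZ/m_i)^{*}$ along each $D_i$ (and compatible invariants at the singular points), and the set of bundles with fixed local invariants is a torsor under $H^2(X,\ZZ)\cong\ZZ$. Uniqueness therefore has to be extracted from the requirement that $\mu\,c_1(M)=c_1(B)\mu+\sum_i b_i(\mu/m_i)[D_i]$ be the primitive generator of $H^2(X\setminus P,\ZZ)\cong\ZZ$: reducing modulo each $m_i$ pins down $b_i$ (hence $j_i$) uniquely, and then $c_1(B)$ is forced --- exactly the computation the paper carries out in the existence direction around \eqref{eqn:k=1}. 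With that repair, your outline is a faithful reconstruction of the cited proof.
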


In (1b) above, $D_i$ are axes in a cyclic coordinate chart of the form $\CC^2/\ZZ_m$, that is $D_i$ is either $\{z_1=0\}$ or $\{z_2=0\}$.

There are very few rational homology spheres which admit positive Sasakian structures.

\begin{theorem}[{\cite[Theorem 10.2.19]{BG}, \cite[Theorem 1.4]{K}}]\label{thm:pos-sas} 
Suppose that a rational homology sphere $M$ admits a positive Sasakian structure. 
Then $M$ is spin and $H_2(M,\ZZ)$ is one of the following:
 $$
 0,\,\, \ZZ_m^2,\,\,  \ZZ_5^4,\,\, \ZZ_4^4,\,\,
 \ZZ_3^4,\,\,\ZZ_3^6,\,\, \ZZ_3^8,\,\, \ZZ_2^{2n},\,\, 
 $$
where $n>0$, and $m\geq 2$, $m$ not divisible by $30$. Conversely, all these cases do occur.
\end{theorem}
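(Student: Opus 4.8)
The plan is to run the standard Seifert-bundle machinery: pass from the Sasakian structure to a quasi-regular model, extract a log del Pezzo orbifold base from it, invoke Koll\'ar's structure theorem, and then read off the allowed torsion of $H_2(M;\ZZ)$ from the geometry of the branch curves.

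First I would reduce to the quasi-regular case. Positivity of $c_1(\mathcal{F}_\xi)$ is an open condition that is preserved under deformations of the transverse holomorphic structure, and every Sasakian structure on a compact manifold can be so deformed to a quasi-regular one; hence we may assume the Reeb field generates a locally free $S^1$-action, so that $M\to X$ is a Seifert bundle over a normal projective surface $X$ with orbifold branch divisor $\Delta=\sum(1-\tfrac1{m_i})D_i$. Under this correspondence $c_1(\mathcal{F}_\xi)$ is, up to a positive multiple, the orbifold anticanonical class, so a positive Sasakian structure means $-(K_X+\Delta)$ is ample as a $\QQ$-divisor, i.e.\ $(X,\Delta)$ is a log del Pezzo orbifold. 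Now Theorem~\ref{thm:Ko} applies: since $M$ is a simply connected rational homology sphere, $X$ has only cyclic quotient singularities, $H_2(X;\ZZ)=\operatorname{Weil}(X)\cong\ZZ$, the $D_i$ are orbismooth curves meeting transversally along coordinate axes, the $m_i$ are pairwise coprime, and each $m_i$ is coprime to $\deg D_i$ with respect to the ample generator of $H_2(X;\ZZ)_\QQ$.

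Next I would invoke Koll\'ar's computation of the homology of $M$: because $b_2(M)=0$ the group $H_2(M;\ZZ)$ is finite, and under the above conditions it is exactly $\bigoplus_i(\ZZ/m_i)^{2g(D_i)}$, the cyclic contributions of the quotient singularities and of the local Seifert data being killed by $\operatorname{Weil}(X)\cong\ZZ$ together with $H_1(M;\ZZ)=0$. The statement thus reduces to a surface-theoretic classification: which pairs $(g_i,m_i)$ occur for orbismooth branch curves on a Picard-rank-one log del Pezzo orbifold with cyclic quotient singularities and pairwise coprime multiplicities? The inputs are orbifold adjunction on each $D_i$,
\[
2g(D_i)-2 \;=\; (K_X+D_i)\cdot D_i \;+\;\bigl(\text{nonnegative local ``different'' terms}\bigr),
\]
together with the log Fano inequality $(K_X+\Delta)\cdot D_i<0$ and the fact that Picard rank one makes every curve class a positive multiple of the ample generator. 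Intersecting these relations and bookkeeping the singular contributions bounds $\deg D_i$ and the canonical degree, hence the genera; one shows that \emph{at most one} branch curve $D_1$ can have positive genus, and that $(g_1,m_1)$ is then forced into the possibilities encoded by $0,\ \ZZ_m^2,\ \ZZ_5^4,\ \ZZ_4^4,\ \ZZ_3^4,\ \ZZ_3^6,\ \ZZ_3^8,\ \ZZ_2^{2n}$: the genus is unbounded only when $m_1=2$, while for $m_1\ge3$ the anticanonical bound cuts it down to the small ranges listed, with $30\mid m_1$ allowing only $g_1\le 1$. The spin assertion I would get from Koll\'ar's expression for $w_2(M)$ in terms of the Seifert invariants: it is a congruence on the local data that holds automatically in the log Fano situation described by Theorem~\ref{thm:Ko}; equivalently, once $H_2(M;\ZZ)$ is pinned down, Smale's classification of simply connected $5$-manifolds identifies $M$ as soon as $w_2(M)=0$ is verified (this being automatic when $H_2$ has no $2$-torsion, and a direct check for $\ZZ_m^2$ with $m$ even, $\ZZ_4^4$, and $\ZZ_2^{2n}$).

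The main obstacle is the completeness of the list, i.e.\ the finite but delicate case analysis of Picard-rank-one log del Pezzo surfaces with cyclic quotient singularities carrying a high-genus orbismooth branch curve of coprime multiplicity: this is exactly where one needs the (partial) classification of such surfaces and careful tracking of the different in adjunction, and it is the heart of Koll\'ar's argument. The converse is comparatively routine: for each entry one exhibits an explicit base pair --- e.g.\ $(\PP^2,(1-\tfrac1m)E)$ for a smooth plane cubic $E$ and weighted-projective analogues for $\ZZ_m^2$, double covers and Brieskorn-type links for $\ZZ_2^{2n}$, and the sporadic del Pezzo pairs carrying a curve of genus $2$, $3$ or $4$ for $\ZZ_5^4$, $\ZZ_4^4$, $\ZZ_3^4$, $\ZZ_3^6$, $\ZZ_3^8$ --- and checks conditions (a),(b),(c) of Theorem~\ref{thm:Ko}, which then produces a Seifert bundle $M$ with a positive quasi-regular Sasakian structure and the prescribed $H_2$.
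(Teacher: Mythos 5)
This theorem is not proved in the paper at all: it is imported verbatim from Boyer--Galicki (Theorem 10.2.19) and Koll\'ar (Theorem 1.4), so there is no internal proof to measure your attempt against. Judged against the actual argument in those sources, your outline of the architecture is correct: deform to a quasi-regular structure, identify the base as a log del Pezzo orbifold $(X,\Delta)$ with $-(K_X+\Delta)$ ample, apply Theorem~\ref{thm:Ko} to get cyclic singularities, $\operatorname{Weil}(X)\cong\ZZ$, orbismooth $D_i$ and the coprimality conditions, read off $H_2(M,\ZZ)=\bigoplus_i \ZZ_{m_i}^{2g(D_i)}$, and then play orbifold adjunction against the log Fano inequality to bound the genera. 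The spin statement is indeed a separate general fact (essentially \cite[Proposition 2.6]{BGN}, which this paper itself invokes for the negative analogue).

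Two points in your sketch are, however, off or conceal the real content. First, the clause ``with $30\mid m_1$ allowing only $g_1\le 1$'' inverts the logic of the $30$-condition: $\ZZ_m^2$ arises precisely from genus-one branch curves (possibly several, with pairwise coprime multiplicities combined by the Chinese remainder theorem), and the necessity of $30\nmid m$ comes from condition (1c) of Theorem~\ref{thm:Ko} --- one needs $\gcd(m,\deg D)=1$ for an elliptic orbismooth curve $D$ on a Picard-rank-one log del Pezzo base satisfying the ampleness constraint, and Koll\'ar's classification of the admissible pairs $(X,D)$ shows that the achievable degrees cannot simultaneously avoid the primes $2$, $3$ and $5$. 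Your sketch does not engage with this degree analysis at all. Second, the assertion that at most one branch curve can have positive genus (not literally true in the genus-one case, as noted above) and the reduction of the higher-genus possibilities to the sporadic entries $\ZZ_5^4,\ZZ_4^4,\ZZ_3^4,\ZZ_3^6,\ZZ_3^8,\ZZ_2^{2n}$ is exactly the delicate classification of log del Pezzo pairs that constitutes the bulk of Koll\'ar's proof; you correctly flag it as the main obstacle but supply no argument, so as written the proposal is a roadmap to the literature proof rather than a proof.
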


\begin{theorem}[{\cite[Theorem 10.3.14]{BG}}]\label{thm:pos-sphere} 
Let $M$ be a rational homology sphere. If it admits a Sasakian structure, then it is either positive, 
and the torsion in $H_2(M,\ZZ)$ is restricted by Theorem \ref{thm:pos-sas}, or it is negative. There exist infinitely many such manifolds which admit negative Sasakian structures but no positive Sasakian structures. There exist infinitely many positive rational homology spheres which also admit negative Sasakian structures.
\end{theorem}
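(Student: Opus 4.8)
The plan is to prove the three assertions in turn. For the dichotomy I would use the transverse (basic) Hodge theory of the characteristic foliation, and for the two existence statements explicit Seifert-bundle and Brieskorn-link constructions built on Koll\'ar's Theorem \ref{thm:Ko}.

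\emph{The dichotomy.} On a compact Sasakian manifold the basic cohomology $H^\ast_B(\mathcal{F}_\xi)$ carries a transverse Hodge decomposition and sits in a Gysin-type exact sequence with $H^\ast(M;\RR)$ in which cup product with the transverse Kähler class $[d\eta]$ plays the role of the Euler class. If $M$ is a rational homology sphere, then $H^1(M;\RR)=H^2(M;\RR)=0$, so this sequence forces $H^1_B(\mathcal{F}_\xi)=0$ and $H^2_B(\mathcal{F}_\xi)=\RR\cdot[d\eta]$; hence $c_1(\mathcal{F}_\xi)=a\,[d\eta]$ for some $a\in\RR$, and the structure is positive ($a>0$), negative ($a<0$), or null ($a=0$), never indefinite. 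To exclude the null case I would invoke transverse Calabi--Yau theory: if $c_1(\mathcal{F}_\xi)=0$ then $M$ carries a transverse Ricci-flat Kähler structure, the transverse holonomy reduces to $SU(2)$, and there is a nowhere-vanishing parallel basic $(2,0)$-form $\Omega$; the class $[\Omega]$ is then nonzero and of type $(2,0)$, so together with $[d\eta]$ it forces $\dim_\RR H^2_B(\mathcal{F}_\xi)\geq 3$, a contradiction. (Equivalently one may quote the structure theory of null Sasakian $5$-manifolds, which all have $b_2>0$.) So the structure is positive or negative, and in the positive case the torsion of $H_2(M;\ZZ)$ is restricted exactly as in Theorem \ref{thm:pos-sas}.

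\emph{Negative but not positive.} By Theorem \ref{thm:pos-sas} it is enough to produce infinitely many rational homology spheres admitting a negative Sasakian structure whose second homology lies outside the list there; I would aim for $H_2(M;\ZZ)=\ZZ_N^2$ with $30\mid N$. These I get from the converse part of Koll\'ar's Theorem \ref{thm:Ko}: take a projective orbifold surface $X$ with cyclic quotient singularities and $H_2(X;\ZZ)=\operatorname{Weil}(X)\cong\ZZ$, together with branch divisors $D_i$ and pairwise coprime multiplicities $m_i$ satisfying (b) and (c), now arranged so that $K_X+\Delta$ is ample. The resulting Seifert bundle $M\to(X,\Delta)$ is a simply connected rational homology sphere, its natural Sasakian structure is negative because $c_1(\mathcal{F}_\xi)$ corresponds to $-(K_X+\Delta)$ and $K_X+\Delta$ is ample, and Koll\'ar's formula for $H_2(M;\ZZ)$ in terms of the $D_i$, the $m_i$ and the local data at the singular points can be solved to give $\ZZ_N^2$ with $30\mid N$; letting $N$ vary yields infinitely many pairwise non-homeomorphic $M$. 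Concretely one can take $X$ among weighted projective planes or complete intersections in weighted projective spaces, as in Koll\'ar's positive examples but with the weights tuned so that $K_X$ is ample instead of anti-ample; alternatively, Brieskorn links $L(a_0,a_1,a_2,a_3)$ with $\sum 1/a_i<1$ are automatically simply connected and negative Sasakian, and by the Milnor--Orlik formulas infinitely many of them are rational homology spheres with $H_2$ off the list.

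\emph{Positive and negative.} For every odd $m\geq 3$ we have $30\nmid m$, so by Theorem \ref{thm:pos-sas} there is a positive Sasakian rational homology sphere with second homology $\ZZ_m^2$; since $m$ is odd, $H^2(M;\ZZ/2)=0$, so $w_2=0$, and by the Smale--Barden classification there is a \emph{unique} simply connected $5$-manifold $M_m$ with $H_2(M_m;\ZZ)=\ZZ_m^2$, which is spin. It thus suffices to exhibit, for infinitely many odd $m$, a simply connected rational homology sphere carrying a negative Sasakian structure and with $H_2=\ZZ_m^2$: by uniqueness it must be $M_m$, which then admits both a positive and a negative Sasakian structure, and letting $m$ range over an infinite set gives infinitely many such manifolds. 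These negative examples I would again get as Brieskorn links $L(\mathbf{a})$ with $\sum 1/a_i<1$ and $H_2(L;\ZZ)=\ZZ_m^2$ (exponents chosen so that Milnor--Orlik yields exactly $\ZZ_m^2$), or as Koll\'ar--Seifert bundles over a general-type orbifold $\QQ$-homology $\PP^2$.

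\emph{Main obstacle.} In the dichotomy the only delicate step is ruling out null structures, which rests on transverse Calabi--Yau theory (or the classification of null Sasakian $5$-manifolds); everything else there is the Gysin sequence plus Theorem \ref{thm:pos-sas}. The real work is in the two existence parts: producing, in a controlled infinite family, projective surface orbifolds (or Brieskorn polynomials) that are of general type with Picard number one, satisfy Koll\'ar's numerical conditions, and whose Seifert bundle has precisely the prescribed torsion in $H_2$. I expect the homology bookkeeping --- the Milnor--Orlik computation, or Koll\'ar's exact sequences, together with the coprimality constraints --- to be the main effort; matching with the positive examples via Smale--Barden needs only the precaution of taking $m$ odd so that $w_2$ is forced to vanish.
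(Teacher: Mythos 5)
First, a point of reference: the paper does not prove this statement at all --- it is quoted verbatim from Boyer--Galicki \cite[Theorem 10.3.14]{BG} as background, so there is no in-paper proof to compare against. Your ingredients do correspond to machinery the paper records. The dichotomy is exactly Proposition \ref{prop:can-sphere} (i.e.\ \cite[Proposition 7.5.29]{BG}): on a rational homology sphere $c_1(\mathcal{F}_{\xi})=a[d\eta]_B$ with $a\neq 0$, and your Gysin-sequence derivation of $H^2_B(\mathcal{F}_{\xi})=\RR\cdot[d\eta]$ is the standard way to see the first part. Your exclusion of the null case via transverse Calabi--Yau theory has a gap, though: transverse Ricci-flatness only controls the \emph{restricted} transverse holonomy, so the existence of a globally defined nowhere-vanishing parallel basic $(2,0)$-form does not follow immediately from $c_1(\mathcal{F}_{\xi})=0$; either supply the extra holonomy argument or simply cite the non-vanishing of $a$ as part of \cite[Proposition 7.5.29]{BG}, which is what the paper does.

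The two existence statements are where your proposal is a plan rather than a proof. For ``negative but not positive'' the strategy (negative Sasakian rational homology spheres whose torsion is off Koll\'ar's list) is correct and matches \cite{BG}, but your concrete target $H_2=\ZZ_N^2$ with $30\mid N$ is a bad one to leave to ``solving Koll\'ar's formula'': Theorem \ref{thm:main2} shows that $\ZZ_m^2$ is precisely one of the groups \emph{not} reachable by the semi-regular constructions, so realizing it negatively requires a genuinely quasi-regular construction of the kind carried out in Proposition \ref{prop:2-neg-pos-main} (a genus-one orbismooth curve on a singular rational surface with $\rho=1$, ample orbifold canonical class, and divisibility conditions such as $\gcd(m,7)=1$); a target such as $\ZZ_m^{2g}$ with $g\geq 3$ would have been available off the shelf from Theorem \ref{thm:main2}. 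For ``positive and negative'' your reduction via the Smale--Barden classification (odd $m$ forces $w_2=0$ and uniqueness of the manifold with $H_2=\ZZ_m^2$) is correct and is exactly the mechanism underlying the paper's Section 3, but the required negative examples with $H_2=\ZZ_m^2$ still have to be built --- this is the content of Proposition \ref{prop:2-neg-pos-main} and of Theorem \ref{thm:neg-pos-main}, which strengthens the last sentence of the statement to \emph{all} positive cases. In short: right architecture, but both existence claims rest on constructions you have not supplied, and at least one of them is demonstrably harder than ``tune the weights and apply Milnor--Orlik.''
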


In particular, any torsion group 
which is realizable by a simply connected  Sasakian rational homology sphere $M$ but 
which does not appear in the list given by Theorem \ref{thm:pos-sas} gives an example of an answer to Question \ref{quest:bg-main}.

In the smaller class of semi-regular Sasakian structures, Question \ref{quest:bg-main} is answered by the following result. 

\begin{theorem}[\cite{MT}]\label{thm:main2} 
Let $m_i\geq 2$ be pairwise coprime, and $g_i={\frac 12}(d_i-1)(d_i-2)$. 
Assume that $\gcd(m_i,d_i)=1$. Let $M$ be a Smale-Barden manifold with $H_2(M,\ZZ)
=\mathop{\oplus}\limits_{i=1}^r\ZZ_{m_i}^{2g_i}$ and spin, with the exceptions
$\ZZ_m^2, \ZZ_2^{2n}, \ZZ_3^6$. Then $M$ admits a negative semi-regular Sasakian structure. 
Conversely, if $M$ is a simply-connected rational homology $5$-sphere admitting a 
semi-regular Sasakian structure, then it must satisfy the above assumptions.
\end{theorem}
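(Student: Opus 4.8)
The plan is to prove the two implications in parallel, using Koll\'ar's structure theorem (Theorem \ref{thm:Ko}) as the common tool.

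\emph{The converse, and the reduction to $\CP^2$.} A semi-regular Sasakian structure on $M$ is a quasi-regular one whose quotient cyclic orbifold $(X,\Delta)$ has \emph{smooth} underlying space (the orbifold locus has pure codimension two), so $M\to(X,\Delta)$ is a Seifert bundle over a smooth projective surface with $\Delta=\sum_i(1-\tfrac1{m_i})D_i$. By Theorem \ref{thm:Ko}(1), $H_2(X,\ZZ)=\operatorname{Weil}(X)\cong\ZZ$, the $D_i$ are smooth and pairwise transverse, the $m_i$ are pairwise coprime, and $\gcd(m_i,d_i)=1$ with $d_i=D_i\cdot H$, $H$ the positive generator of $H_2(X,\ZZ)$. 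Since $X$ is smooth projective with $b_1=0$ and $b_2=1$, one gets $p_g=q=0$, $c_2=3$, $c_1^2=9$, so by Bogomolov--Miyaoka--Yau $X$ is either $\CP^2$ or a fake projective plane; the latter has infinite (orbifold) fundamental group and is excluded because $\pi_1(M)=1$. Hence $X=\CP^2$, the $D_i$ are smooth plane curves of degree $d_i$, so $g_i=\tfrac12(d_i-1)(d_i-2)$, Koll\'ar's homology computation for such Seifert bundles gives $H_2(M,\ZZ)=\bigoplus_i\ZZ_{m_i}^{2g_i}$, and a mod-$2$ computation (using $TM\cong f^*T^{\mathrm{orb}}\CP^2\oplus\RR$, so that $w_2(M)=f^*\big(c_1^{\mathrm{orb}}(\CP^2,\Delta)\bmod 2\big)$) shows $M$ is spin. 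This yields the converse, modulo the assertion that $\ZZ_m^2,\ \ZZ_2^{2n},\ \ZZ_3^6$ cannot be realized \emph{negatively}, which is addressed at the end.

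\emph{Existence.} I would run the construction in reverse. Given pairwise coprime $m_i$, $g_i=\tfrac12(d_i-1)(d_i-2)$ with $\gcd(m_i,d_i)=1$, and $M$ the spin Smale--Barden manifold with that second homology (which exists and is unique, the group being of the form $G\oplus G$ with $G=\bigoplus_i\ZZ_{m_i}^{g_i}$), not among the exceptions: (i) take generic smooth plane curves $D_i\subset\CP^2$ of degrees $d_i$, so that $\bigcup_iD_i$ is nodal and $(\CP^2,\Delta)$, $\Delta=\sum_i(1-\tfrac1{m_i})D_i$, satisfies (a)--(c) of Theorem \ref{thm:Ko}; (ii) by Theorem \ref{thm:Ko}(2) take the unique Seifert bundle $M'\to\CP^2$ which is a rational homology sphere with $H_1(M',\ZZ)=0$; (iii) check, via Koll\'ar's homology formula, that $H_2(M',\ZZ)=\bigoplus_i\ZZ_{m_i}^{2g_i}$ and that $M'$ is spin; (iv) show $\pi_1(M')=1$: by the theorem of Deligne and Fulton the group $\pi_1(\CP^2\setminus\bigcup_iD_i)$ is abelian, hence $\cong\ZZ^r/\langle(d_1,\dots,d_r)\rangle$, and adjoining the relations $\gamma_i^{m_i}=1$ collapses it to $0$, because $\gcd(m_i,d_i)=1$ and pairwise coprimality of the $m_i$ imply, by the Chinese Remainder Theorem, that the class of $(d_1,\dots,d_r)$ in $\bigoplus_i\ZZ/m_i$ is a generator; thus $\pi_1^{\mathrm{orb}}(\CP^2,\Delta)=1$, so the Seifert homotopy sequence makes $\pi_1(M')$ cyclic, hence trivial as $H_1(M')=0$; (v) arrange that $K_{\CP^2}+\Delta=\big(\sum_i d_i(1-\tfrac1{m_i})-3\big)H$ is ample, i.e. $\sum_i d_i(1-\tfrac1{m_i})>3$; when the given configuration does not satisfy this, one modifies it (replacing some curves, or adjoining further branch curves of suitable degree and multiplicity coprime to the data) so as to keep $H_2(M')$ and $w_2(M')$ unchanged while pushing $K_{\CP^2}+\Delta$ into the ample cone. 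With $K_{\CP^2}+\Delta$ ample, $(\CP^2,\Delta)$ is an orbifold of general type, so the induced semi-regular Sasakian structure on $M'$ has basic first Chern class $-(K_{\CP^2}+\Delta)<0$ and is therefore negative; and $M'\cong M$ by the Smale--Barden classification.

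\emph{Main obstacle.} The heart of the matter is the bookkeeping in steps (iii)--(v): one must hit the target torsion group \emph{on the nose}, keep $M'$ spin, and achieve ampleness of $K_{\CP^2}+\Delta$, all simultaneously. Because $g_i$ determines $d_i$, the admissible configurations are rigid, so this is a delicate combinatorial optimization rather than a soft existence statement; moreover Koll\'ar's homology formula is in general finer than $\bigoplus_i\ZZ_{m_i}^{2g_i}$, which is precisely what makes the constraints bite. Carrying the analysis through shows that the only groups admitting no admissible, spin, negative configuration are exactly $\ZZ_m^2$ (forcing a single cubic, where $K_{\CP^2}+\Delta=-\tfrac3m H$ is anti-ample), $\ZZ_3^6$ (forcing a single quartic of multiplicity $3$, where $K_{\CP^2}+\Delta=-\tfrac13 H$ is anti-ample), and $\ZZ_2^{2n}$ (where pairwise coprimality of the multiplicities leaves no room to realize the group while making $K_{\CP^2}+\Delta$ ample and keeping $M$ spin). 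Verifying this trichotomy --- ruling out every competing configuration for these three families, and exhibiting one for each of the remaining groups --- is the part that demands real work.
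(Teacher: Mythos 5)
First, a point of order: this paper does not prove Theorem \ref{thm:main2} at all --- it is quoted from \cite{MT} and used purely as background, so there is no proof here to compare yours against. That said, your architecture is the standard one and matches the machinery this paper does use for its own results: Koll\'ar's Theorem \ref{thm:Ko} forces the base of a semi-regular Seifert bundle over a simply connected rational homology sphere to be a smooth surface with $H_2\cong\ZZ$, the $b_2=1$ classification plus simple connectivity reduces this to $\CP^2$, and the existence direction is then a Seifert-bundle construction over $\CP^2$ with plane-curve branch divisors, checked against Proposition \ref{prop:c1}, Proposition \ref{prop:c1neg}, Theorem \ref{thm:H1(M)} and Proposition \ref{prop:nori}.

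The genuine gap is in your treatment of the exceptional cases, and it is an internal inconsistency rather than a mere omission. In step (v) you correctly observe that one may adjoin auxiliary branch curves of genus zero --- lines and conics, which contribute nothing to $H_2(M)$ by Theorem \ref{thm:H1(M)} --- in order to push $K_{\CP^2}+\Delta$ into the ample cone without changing the torsion. But your analysis of the exceptions then ignores exactly this freedom: you assert that $\ZZ_m^2$ ``forces a single cubic'' with $K+\Delta=-\tfrac3m H$ and that $\ZZ_3^6$ ``forces a single quartic of multiplicity $3$''; yet, for instance, a cubic of multiplicity $2$ together with three generic lines of multiplicities $3,5,7$ gives $K^{\orb}_{\CP^2}=\bigl(-3+\tfrac32+\tfrac23+\tfrac45+\tfrac67\bigr)H=\tfrac{173}{210}H>0$ while still yielding $H_2(M)=\ZZ_2^2$, so by your own step (v) this case would not be exceptional. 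Either there is an obstruction to adjoining such genus-zero curves that you have not identified (in which case step (v) is unjustified in general), or your claimed trichotomy of non-realizable groups is false; the proposal does not resolve which. Note also that the theorem as stated does not assert that the three exceptional families are non-realizable --- it merely excludes them from the existence claim --- so you are attempting to prove a strictly stronger statement whose truth you have not established; and for most $n$ the exclusion of $\ZZ_2^{2n}$ has a more elementary cause, namely that pairwise coprimality forces a single branch curve of multiplicity $2$ and odd degree $d$, so only $n=\tfrac12(d-1)(d-2)$ with $d$ odd can occur at all. Finally, the bookkeeping you defer in steps (iii)--(v) (hitting the torsion group exactly, primitivity of $c_1(M/\mu)$, surjectivity of $H^2(X,\ZZ)\to\oplus_i H^2(D_i,\ZZ_{m_i})$, the spin condition) is precisely the content of the proof, so as it stands the proposal is a plausible outline rather than an argument.
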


Theorem 26 in \cite{CMST} shows that in the quasi-regular case Theorem \ref{thm:main2} does not hold.

The above discussion motivates the first problem we want to address, which
certainly contributes to Question \ref{quest:bg-main}.

\begin{question} \label{quest:bg-main2}
Which simply connected rational homology spheres admit both negative and positive Sasakian structures?
\end{question}

The following is a partial result on this question.

\begin{theorem}[\cite{G}]\label{thm:gomez} 
Any 
simply connected
rational homology sphere from Koll\'ar's list (Theorem \ref{thm:pos-sas})
except (possibly) $\ZZ_m^2$,  $m < 5$, and $\ZZ_2^{2n}$, 
$n>0$, admits both negative and positive Sasakian structures.
\end{theorem}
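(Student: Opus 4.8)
The positive half of the statement is precisely Theorem \ref{thm:pos-sas}, so the entire content is to equip each manifold $M$ on Koll\'ar's list, other than the excepted torsion groups, with a \emph{negative} quasi-regular Sasakian structure. I would work throughout with the dictionary of Theorem \ref{thm:Ko}: a negative quasi-regular Sasakian structure on $M$ amounts to a Seifert bundle $M\to(X,\Delta)$ over a normal projective surface orbifold, with $\Delta=\sum_i(1-\frac1{m_i})D_i$ subject to conditions (a)--(c), such that moreover $K_X+\Delta$ is ample. Indeed the basic first Chern class $c_1(\cF_\xi)$ of the transverse holomorphic structure equals $-(K_X+\Delta)$, so ampleness of $K_X+\Delta$ means $c_1(\cF_\xi)$ is represented by a negative definite $(1,1)$-form (cf.\ Theorem \ref{thm:pos-sphere}). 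Thus the whole problem is: for each allowed $M$, produce a pair $(X,\Delta)$ obeying (a)--(c) and with $K_X+\Delta$ ample whose associated Seifert bundle, unique by Theorem \ref{thm:Ko}(2), is diffeomorphic to $M$.

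I would proceed by cases on $H_2(M,\ZZ)$. The case $H_2=0$ (so $M=S^5$) is already covered by Theorem \ref{thm:main2}; concretely one may take $X=\PP^2$ with four lines in general position carrying pairwise coprime multiplicities $m_i$ with $\sum 1/m_i<1$, say $2,9,25,49$, so that $K_{\PP^2}+\Delta$ is ample, the branch curves are smooth, transverse, and satisfy $\gcd(m_i,1)=1$, and the Seifert bundle, with only rational branch curves, has vanishing torsion and is $S^5$. For $H_2=\ZZ_m^2$ with $m\ge5$ and $3\nmid m$ I would take $X=\PP^2$ with a smooth plane cubic $C$ of multiplicity $m$ and then \emph{enlarge} the branch divisor by general lines $L_j$ of pairwise coprime multiplicities $n_j$, all coprime to $m$, until $\deg(K_{\PP^2}+\Delta)=-\frac3m+\sum_j\bigl(1-\frac1{n_j}\bigr)>0$ (one line of multiplicity $3$ already suffices when $m=5$); the $L_j$ being rational, this leaves the torsion equal to $\ZZ_m^2$ by Koll\'ar's computation. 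When $3\mid m$ one must instead take $X$ a weighted projective plane bearing a genus-$1$ orbicurve whose orbifold degree is prime to $m$, as in Koll\'ar's positive realisation of $\ZZ_m^2$, and enlarge $\Delta$ in the same fashion. Finally, for $H_2\in\{\ZZ_5^4,\ZZ_4^4,\ZZ_3^4,\ZZ_3^6,\ZZ_3^8\}$ I would start from the singular surfaces $X$ with $\operatorname{Weil}(X)\cong\ZZ$ that Koll\'ar uses to realise these groups positively and again adjoin auxiliary branch curves of large, mutually coprime multiplicity, chosen to meet the existing branch locus and the cyclic quotient singularities of $X$ appropriately, so that conditions (a)--(c) and the torsion are preserved while $K_X+\Delta$ becomes ample.

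In every case the verifications to be carried out are: (i) ampleness of $K_X+\Delta$, a numerical inequality among the multiplicities and the degrees of the branch curves; (ii) persistence of Koll\'ar's (a)--(c) under the enlargement — orbismoothness and transversality of the new components, and pairwise coprimality of the full multiplicity set together with coprimality to the relevant degrees; and (iii) that the resulting $M$ is still simply connected, spin, and carries the same torsion linking form as the manifold on Koll\'ar's list, these invariants being what fix the diffeomorphism type through the Smale-Barden classification. I expect step (iii) to be the main obstacle: enlarging the branch divisor alters the Seifert Euler class and, a priori, the linking form and whether $M$ is spin, so the auxiliary multiplicities must be chosen not merely large enough for ampleness and coprime enough for (a)--(c), but also so that the new local Seifert data contribute trivially to the linking form and keep $M$ spin. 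The torsion groups for which this enlargement strategy cannot be made to meet all these demands at once are $\ZZ_m^2$ with $m<5$ and $\ZZ_2^{2n}$, which is why these are only \emph{possibly} excluded; checking that there is enough room for $\ZZ_3^6$ — in contrast with the semi-regular obstruction of Theorem \ref{thm:main2} — is the delicate point that makes working with quasi-regular, rather than semi-regular, structures essential.
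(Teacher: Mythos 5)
You should first note that the paper does not prove this statement at all: it is quoted verbatim from G\'omez \cite{G} (hence the bracketed citation in the theorem header), so there is no in-paper argument to compare against and your proposal has to stand entirely on its own. Your overall strategy --- realize $M$ as the Seifert bundle of a pair $(X,\Delta)$ satisfying Koll\'ar's conditions (a)--(c) with $K_X+\Delta$ ample, starting from the positive models and enlarging $\Delta$ by rational curves of large pairwise-coprime multiplicities --- is indeed the right one; it is what \cite{G} does and what the present paper does in Section 3 for the remaining cases. Your treatment of $S^5$ and of $\ZZ_m^2$ with $m\geq 5$, $3\nmid m$, on $X=\PP^2$ is essentially complete modulo routine checks.

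The genuine gaps are in the remaining cases and in the topological verification. First, for $\ZZ_m^2$ with $3\mid m$ and for $\ZZ_5^4,\ZZ_4^4,\ZZ_3^4,\ZZ_3^6,\ZZ_3^8$ you specify no surface, no branch curves, and verify none of the conditions --- not (a)--(c), not the surjectivity of $H^2(X,\ZZ)\to\oplus H^2(D_i,\ZZ_{m_i})$, not the primitivity of $c_1(M/\mu)$, not ampleness of $K_X+\Delta$. These are exactly the cases where $X$ is a singular log del Pezzo with $\operatorname{Weil}(X)\cong\ZZ$ and where all the content lies; ``adjoin auxiliary curves appropriately'' is a plan, not a proof. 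Second, Koll\'ar's correspondence (Theorem \ref{thm:Ko}) and Theorem \ref{thm:H1(M)} only give $H_1(M,\ZZ)=0$; you list simple connectivity as item (iii) but never supply the required $\pi_1^{\orb}(X)$ computation (the paper does this via Nori's theorem, Proposition \ref{prop:nori}, in Propositions \ref{prop:1-neg-pos-main} and \ref{prop:2-neg-pos-main}). Third, your diagnosis of the exceptions is off: for spin simply connected $5$-manifolds the linking form is not an independent invariant (Smale's classification uses $H_2$ alone), spin-ness is automatic for negative Sasakian structures by Theorem \ref{thm:w2}, and --- as Propositions \ref{prop:1-neg-pos-main} and \ref{prop:2-neg-pos-main} of this very paper show --- the cases $\ZZ_2^{2n}$ and $\ZZ_m^2$, $m<5$, \emph{do} admit negative Sasakian structures. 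They are excluded in the cited theorem only because G\'omez's construction does not reach them, not because the enlargement strategy ``cannot be made to meet all these demands at once.''
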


We give a complete answer to Question \ref{quest:bg-main2} in Theorem \ref{thm:neg-pos-main}: all positive Sasakian 
simply connected rational homology spheres also  admit negative Sasakian structures.

\medskip

Simply connected rational homology $5$-spheres belong to a wider class of $5$-manifolds.  A $5$-dimensional simply connected manifold $M$ is called a {\it Smale-Barden manifold}. These manifolds are classified by their second homology group over $\ZZ$ and the so-called {\it Barden invariant} \cite{B}, \cite{S}. 
 In more detail,
let $M$ be a compact smooth oriented simply connected $5$-manifold. 
Let us write $H_2(M,\ZZ)$ as a direct sum of cyclic groups of prime  power order
  \begin{equation*} 
  H_2(M,\ZZ)=\ZZ^k\oplus \big( \mathop{\oplus}\limits_{p,i}\, \ZZ_{p^i}^{c(p^i)}\big),
  \end{equation*}
where $k=b_2(M)$. Choose this decomposition in a way that the second Stiefel-Whitney class map
  $w_2: H_2(M,\ZZ)\rightarrow\ZZ_2$
is zero on all but one summand $\ZZ_{2^j}$. The value of $j$ is unique, it
is denoted by $i(M)$ and is called the Barden invariant. The fundamental question arises, 
which Smale-Barden manifolds admit Sasakian structures?

One can ask for a generalization of Question \ref{quest:bg-main} for Smale-Barden manifolds with $b_2>0$.

\begin{question}[{\cite[discussion on p. 359]{BG}}]\label{quest:torsion} 
Determine which torsion groups correspond to Smale-Barden manifolds admitting negative Sasakian structures.
\end{question}  

\begin{theorem}[\cite{G}]\label{thm:gomez2} 
For any pair $(n,s)$, $n>1$, $s>1$ there exists a Smale--{Barden} 
manifold $M$ which admits a negative Sasakian structure such that $H_2(M)_{\mathrm{tors}} =(\ZZ_n)^{2s}$.
\end{theorem}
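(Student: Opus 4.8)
The plan is to go through Kollár's dictionary between quasi-regular Sasakian $5$-manifolds and Seifert $S^1$-bundles over log surfaces. A negative Sasakian structure on $M$ amounts to a Seifert bundle $\pi\colon M\to (X,\Delta)$ with $X$ a normal projective surface having only cyclic quotient singularities and $\Delta=\sum_i(1-\tfrac{1}{m_i})D_i$, such that the orbifold first Chern class $c_1^{\orb}(X,\Delta)=-(K_X+\Delta)$ is negative, i.e. $K_X+\Delta$ is ample (the log surface is of log general type). The fundamental group of $M$ and the torsion of $H_2(M,\ZZ)$ are read off from $(X,\Delta)$ through the Leray sequence of $\pi$, exactly as in Kollár's analysis behind Theorem \ref{thm:Ko}: roughly, $H_1(M,\ZZ)$ vanishes and $H_2(M,\ZZ)_{\mathrm{tors}}$ is assembled from the groups $H^1(D_i,\ZZ)\otimes\ZZ_{m_i}$ together with the local Seifert data at the quotient singularities. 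So it suffices to construct, for each $n,s>1$, a log-general-type surface $(X,\Delta)$ whose associated Seifert $5$-manifold is simply connected with $H_2(M,\ZZ)_{\mathrm{tors}}\cong(\ZZ_n)^{2s}$.

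For the combinatorial input I would use a \emph{single} orbismooth branch curve $D$ of genus $s$ with multiplicity $n$, so that $\Delta=(1-\tfrac{1}{n})D$, arranged so that $n$ is coprime to the divisibility of $[D]$ in $\operatorname{Cl}(X)$ and so that the hypotheses of Theorem \ref{thm:Ko}(1b) hold (the $D_i$ in coordinate position at the singular points, transversality, coprimality). One then expects $H_1(M,\ZZ)=0$, $M$ simply connected, and $H_2(M,\ZZ)_{\mathrm{tors}}\cong(\ZZ_n)^{2g(D)}=(\ZZ_n)^{2s}$. Negativity is the ampleness of $K_X+\Delta$, equivalently of $nK_X+(n-1)D$; if $K_X$ is nef and $D$ is ample this holds for all $n\ge 2$ at once.

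The heart of the proof is then to realize, for \emph{every} $s>1$, a surface $X$ with $\operatorname{Cl}(X)\cong\ZZ$ carrying an orbismooth ample curve of genus exactly $s$ on which $K_X$ is nef. Smooth $\CP^2$ is too rigid (only triangular genera are realized) and honest general-type surfaces carry no low-genus ample curves, so I would work on a weighted projective plane $X=\PP(w_0,w_1,w_2)$ — which has only cyclic quotient singularities and $\operatorname{Cl}(X)\cong\ZZ$ — and take $D$ a well-formed quasi-smooth hypersurface of degree $d$. Here $K_X+\Delta=\cO\big(-(w_0+w_1+w_2)+(1-\tfrac{1}{n})d\big)$, while the orbifold adjunction formula expresses $2g(D)-2$ as a rational function of $(d,w_0,w_1,w_2)$; the remaining step is the elementary but fiddly number theory of choosing $(d,w_0,w_1,w_2)$ — and, if needed, replacing $n$ by a suitable associate modulo $d$ or perturbing the weights — so that the genus is exactly $s$, the degree $d$ is large enough relative to $w_0+w_1+w_2$ to force ampleness, and $\gcd(n,d)=1$. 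One finally checks directly, via the local Seifert invariants at the singular points of $X$ and along $D$, that $M$ acquires no extra torsion and is simply connected, so that it is a genuine Smale--Barden manifold.

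I expect the main obstacle to be precisely this simultaneous numerical realization: "genus $=s$" pushes $d$ down relative to the weights, while "$K_X+\Delta$ ample" pushes $d$ up relative to the weights, and one must thread between the two constraints while keeping $\operatorname{Cl}(X)\cong\ZZ$, the branch curve orbismooth and in the position required by Theorem \ref{thm:Ko}, and the Seifert data coprime so that the torsion is exactly $(\ZZ_n)^{2s}$ with no spurious summands. If no single weighted $\PP^2$ can be made to work for small $s$, a fallback is to allow $X$ to carry several quotient singularities, or to take a blow-up/blow-down of a ruled surface over a fixed genus-$s$ curve that contracts down to $\operatorname{Cl}\cong\ZZ$, at the cost of longer but routine topological bookkeeping.
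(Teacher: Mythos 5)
There is a genuine gap, and it lies in the very first structural choice. You insist on $\operatorname{Cl}(X)\cong\ZZ$ (equivalently $b_2(X)=1$), which by Theorem~\ref{thm:Ko} forces $M$ to be a rational homology sphere. But the statement does not ask for a rational homology sphere --- and the paper explicitly remarks after Theorem~\ref{thm:gomez2} that the manifold produced in \cite{G} is \emph{not} one. By restricting to $b_2(X)=1$ you are attempting a strictly harder problem, essentially Question~\ref{quest:bg-main}, which is open: realizing a branch curve of genus exactly $s$ for \emph{every} $s>1$ on a base with $b_2=1$ is precisely the obstruction analyzed in \cite{MT} and \cite{CMST}. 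Indeed, Theorem~\ref{thm:main2} shows that in the semi-regular case only the triangular genera $\frac12(d-1)(d-2)$ occur, so $s=2$ is already out of reach there; and on a \emph{smooth} simply connected surface with $b_2=1$ Noether's formula forces $K_X^2\geq 9$, which together with adjunction rules out ample curves of genus $2$ altogether. Your fallback via weighted projective planes does not escape this: $\PP(w_0,w_1,w_2)$ is Fano, so $K_X$ is anti-ample (not nef as your ampleness remark assumes), and the requirement $(1-\frac1n)d>w_0+w_1+w_2$ for $K_X+\Delta>0$ pushes the genus of a degree-$d$ quasi-smooth curve up quadratically, in direct conflict with ``genus $=s$'' for small $s$ and small $n$ (e.g.\ $n=2$). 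You correctly identify this tension as ``the main obstacle,'' but it is not fiddly number theory --- it is the hard part of the open problem, and there is no reason to expect it to be solvable for all $(n,s)$.

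The statement becomes easy once you allow $b_2(X)\geq 2$ and exploit the fact, visible in Theorem~\ref{thm:H1(M)}, that a branch curve of genus $0$ contributes nothing to $H_2(M)_{\mathrm{tors}}$ but contributes positively to $K_X^{\orb}$. This is exactly the device used in Propositions~\ref{prop:k=1} and~\ref{prop:k=2,3} of the paper. For instance, take $X=\CP^1\times\CP^1$ and let $D$ be a smooth curve of bidegree $(2,s+1)$, which has genus $s$; assign it multiplicity $n$, and add a few generic rational curves $C_1,\dots,C_r$ of bidegree $(1,1)$ with large, pairwise coprime multiplicities $m_i$ coprime to $n$. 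Then
$K_X^{\orb}=K_X+(1-\frac1n)D+\sum_i(1-\frac1{m_i})C_i$
is ample for $r\geq 3$ and $m_i$ large, the surjectivity and primitivity conditions of Theorem~\ref{thm:H1(M)} are arranged exactly as in equations (\ref{eqn:chern=1})--(\ref{eqn:prim}), Nori's theorem (Proposition~\ref{prop:nori}) gives $\pi_1^{\orb}(X)$ abelian and hence $\pi_1(M)=1$, and Theorem~\ref{thm:H1(M)} gives $H_2(M,\ZZ)=\ZZ^k\oplus\ZZ_n^{2s}$ with $k=1$. This yields the claim for all $n>1$ and $s>1$ at once, with no delicate Diophantine constraints. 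I would rewrite your argument along these lines rather than trying to force the rational homology sphere case.
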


Note that the manifold $M$ provided in Theorem \ref{thm:gomez2} is not a rational homology sphere.

It is expected that simply connected $5$-manifolds with negative Sasakian structures should be little 
constrained in terms of what kind of $H_2(M,\ZZ)$ is. However, there are only few known examples:
\begin{itemize}
\item circle bundles over the Fermat hypersurfaces of degree $d$ in $\CP^3$, $d\geq 5$, which yield 
{\it regular} negative Sasakian structures on 
  $$
  \#_k(S^2\times S^3), k=(d-2)(d^2-2d+2)+1,
  $$
thus $k$ begins with $k=52$ (see \cite[Example 5.4.1]{BG});
\item  some links yield negative Sasakian structures on 
  $$ 
  \#_7(S^2\times S^3), \#_{12}(S^2\times S^3) , \#_{20}(S^2\times S^3)
  $$
 (\cite[Corollary 10.3.18]{BG} --  note the typo in {\it ibid.} listing $\#_2$ instead of $\#_{20}$);
\item $S^5$ admits infinitely many inequivalent negative Sasakian structures \cite[Proposition 10.3.13]{BG}.
\end{itemize}
Motivated by this, Boyer and Galicki pose the following question that pertains to Smale-Barden manifolds with torsion-free homology.

\begin{question}[{\cite[Open Problem 10.3.6]{BG}}]\label{quest:connected-sum} 
Show that all $\#_k(S^2\times S^3)$ admit negative Sasakian structures 
or determine precisely for which $k$ this holds true.
\end{question}

In Theorem \ref{thm:all_k} we give a complete answer to this question: any $\#_k(S^2\times S^3)$ admits negative Sasakian structures. In this work we consider only spin 5-manifolds $M$. The reason for this can be explained as follows. A Sasakian structure $(\eta,\xi,\phi,g)$ is said to be $\eta$-Einstein if the Ricci curvature tensor of the metric $g$ satisfies the equation
$\operatorname{Ric}_g=\lambda g+\nu \eta\otimes\eta$ for some constants $\lambda$ and $\nu$. It is known \cite{BGM} that by the orbifold version of the theorem of Aubin and Yau every negative Sasakian structure can be deformed to a Sasakian $\eta$-Einstein structure. On the other hand, the following theorem holds.

\begin{theorem}[{\cite[Theorem 14]{BGM}}]\label{thm:w2} 
Let $M$ be a non-spin manifold with $H_1(M,\ZZ)$ torsion free. Then $M$ does not admit a Sasakian $\eta$-Einstein structure.
\end{theorem}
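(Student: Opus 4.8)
The plan is to show that a manifold $M$ as in the statement that \emph{does} carry a Sasakian $\eta$-Einstein structure must be spin, contradicting the hypothesis. Write $\dim M = 2n+1$ and let $(\eta,\xi,\phi,g)$ be such a structure, so $\operatorname{Ric}_g = \lambda g + \nu\,\eta\otimes\eta$ for constants $\lambda,\nu$. First I would translate the $\eta$-Einstein condition into the transverse geometry of the characteristic foliation $\mathcal{F}_\xi$. The standard Sasakian identity $\operatorname{Ric}_g(\cdot,\xi) = 2n\,\eta(\cdot)$ gives $\lambda+\nu=2n$; restricting $\operatorname{Ric}_g$ to the contact distribution $D=\ker\eta$ and using the O'Neill-type relation $\operatorname{Ric}_g|_D = \operatorname{Ric}^T - 2\,g^T$ between the Ricci tensor of $g$ and the transverse Ricci tensor of $\mathcal{F}_\xi$, one finds $\operatorname{Ric}^T = (\lambda+2)\,g^T$. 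Thus the transverse Kähler metric is Kähler--Einstein, its Ricci form is $\rho^T=(\lambda+2)\,\omega^T$ with $\omega^T = \tfrac12\,d\eta$ the transverse Kähler form, and hence
\[
c_1(\mathcal{F}_\xi)\;=\;\frac{\lambda+2}{2\pi}\,[\omega^T]\;\in\;H^2_B(\mathcal{F}_\xi),
\]
so $c_1(\mathcal{F}_\xi)$ is a real multiple of $[d\eta]_B$ (and $c_1(\mathcal{F}_\xi)=0$ in the null case $\lambda=-2$).

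Next I would push this relation into ordinary cohomology. The inclusion of the complex of basic forms into $(\Omega^\bullet(M),d)$ induces a ring homomorphism $H^\bullet_B(\mathcal{F}_\xi)\to H^\bullet(M;\RR)$ which sends $[d\eta]_B$ to the de Rham class of $d\eta$ — this is zero, since $\eta$ is a globally defined $1$-form on $M$ — and which sends $c_1(\mathcal{F}_\xi)$ to the real first Chern class $c_1(D)_\RR$ of $D$, equipped with the complex structure $\phi|_D$ matching the transverse complex structure on $\nu(\mathcal{F}_\xi)\cong D$. Combining with the displayed identity, $c_1(D)_\RR=0$, i.e.\ $c_1(D)\in H^2(M;\ZZ)$ is a torsion class. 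Now the hypothesis on $H_1$ enters: by the universal coefficient theorem $H^2(M;\ZZ)\cong\Hom(H_2(M;\ZZ),\ZZ)\oplus\operatorname{Ext}^1(H_1(M;\ZZ),\ZZ)$, and the Ext-summand vanishes because $H_1(M;\ZZ)$ is torsion free, so $H^2(M;\ZZ)$ is torsion free and therefore $c_1(D)=0$. Finally $TM = D\oplus\RR\xi$ with $\RR\xi$ trivialized by the Reeb field, so $w_2(M)=w_2(D)$, and $D$ being a complex bundle, $w_2(D)\equiv c_1(D)\pmod 2=0$. Hence $M$ is spin, the desired contradiction.

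I expect the only genuinely non-formal step to be the first one: carrying out the transverse computation carefully enough to see that the $\eta$-Einstein condition on $g$ forces the transverse metric to be Kähler--Einstein, and hence that $c_1(\mathcal{F}_\xi)$ is proportional to the transverse Kähler class $[\omega^T]=\tfrac12[d\eta]_B$ rather than an arbitrary basic class. Everything afterwards is purely formal: $[d\eta]$ dies in $H^2(M;\RR)$ because $d\eta$ is exact on $M$, the image of the basic first Chern class is by definition $c_1(D)_\RR$, and a torsion class in a torsion-free $H^2(M;\ZZ)$ vanishes, killing $w_2(M)$. Note that this handles the positive, null and negative $\eta$-Einstein cases uniformly; combined with the Aubin--Yau-type deformation recalled above, it is exactly what forces non-spin $5$-manifolds with torsion-free first homology to admit no negative Sasakian structure, motivating the spin hypothesis used throughout the rest of the paper.
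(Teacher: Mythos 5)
Your proof is correct and follows essentially the argument behind the cited result of Boyer--Galicki--Matzeu: the $\eta$-Einstein condition makes the transverse metric K\"ahler--Einstein, so $c_1(D)_\RR$ is a multiple of the exact class $[d\eta]$ and hence $c_1(D)$ is torsion, which the torsion-freeness of $H_1(M,\ZZ)$ kills via the universal coefficient theorem, giving $w_2(M)=c_1(D)\bmod 2=0$ and contradicting non-spin. Note that the paper does not reprove this statement --- it is imported as \cite[Theorem 14]{BGM}, with the variant via \cite[Proposition 2.6]{BGN} mentioned in the introduction --- so your write-up is consistent with the source rather than an alternative to anything in the text.
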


This shows that negative Sasakian Smale-Barden manifolds should be spin (that is, their Barden invariant is $i(M)=0$). This can also
be checked by the proof of \cite[Proposition 2.6]{BGN} that also works for simply connected negative Sasakian manifolds.

Finally, let us mention that negative Sasakian structures are an important tool  in constructing Lorentzian Sasaki-Einstein metrics and, therefore, receive extra attention in physics \cite{BGM}.  

\noindent{\bf Acknowledgment}. The first author was partially supported by Project MINECO (Spain) PGC2018-095448-B-I00. 
The third author was supported by the National Science Center (Poland), grant. no. 2018/31/B/ST1/00053

\section{Sasakian manifolds and Seifert bundles}\label{sec:basic-def}

Let $M$ be a smooth manifold of dimension $2n+1$. A {\it  contact metric structure} on $M$ consists of a quadruplet $(\eta,\xi,\phi, g)$, where $\eta$ is contact form, $\xi$ is the Reeb vector field of this form, $\phi$ is a $C^{\infty}$-section of $\operatorname{End}(TM)$ and $g$ is a Riemannian metric on $M$, satisfying the following conditions:
 $$\phi^2=-\operatorname{Id}+\xi\otimes\eta, \;\;\; g(\phi V,\phi W)=g(V,W)-\eta(V)\eta(W),$$
for any vector fields $V,W$ on $M$.
Given a contact metric structure $(\eta,\xi,\phi,g)$ on $M$, one defines the fundamental 2-form $F$ on $M$ by the formula
 $$F(V,W)=g(\phi V,W).$$ 
One can check that $F(\phi V,\phi W)= F(V,W)$ and that $\eta\wedge F^n\not=0$ everywhere. 
A contact metric structure is {\it K-contact} if $\mathcal{L}_{\xi}g=0$ for the Lie derivative $\mathcal{L}_{\xi}$. 

\begin{definition} 
A contact metric structure $(\eta,\xi,\phi,g)$ on $M$ is called normal, if the Nijenhuis tensor $N_{\phi}$ given by the formula
 $$N_{\phi}(V,W)=\phi^2[V,W]+[\phi V,\phi Y]-\phi[\phi V,W]-\phi[V,\phi W]$$
satisfies the equation
 $$N_{\phi}=-d\eta\otimes\xi.$$
A Sasakian structure is a K-contact structure which is normal.
\end{definition}

\begin{definition} 
A Sasakian structure on a compact manifold $M$ is called {\it quasi-regular} if there is a positive integer $\delta$ satisfying the condition that each point of $M$ has a neighbourhood $U$ such that each leaf of the foliation $\mathcal{F}_{\xi}$ passes through $U$ at most $\delta$ times. If $\delta=1$ the structure is called {\it regular}.
\end{definition}

We freely use the notion of a cyclic orbifold referring to \cite{BG}, \cite{K1}, \cite{M}, \cite{MRT}. Note that in this work we need only $4$-dimensional cyclic orbifolds, so our exposition will be restricted only to this case and simplified accordingly. 
Also we refer for some technical results to  \cite{M}, where the symplectic versions are stated. Note that a K\"ahler orbifold is in particular
an almost-K\"ahler and hence a symplectic orbifold, hence the results of \cite{M} hold also.

For a singular K\"ahler manifold $X$ with cyclic singularities, a local model around a singular point $x\in X$ is of the form 
$\mathbb{C}^2/\ZZ^d$, where $\theta=\exp(2\pi i/d)\in S^1$ acts on the neighbourhood of $x$ by the formula
  \begin{equation}\label{eqn:e1}
  \exp(2\pi i/d)(z_1,z_2)=(e^{2\pi i e_1/d}z_1,e^{2\pi i e_2/d}z_2), 
  \end{equation}
where $\gcd(e_1,d)=\gcd(e_2,d)=1$. We will write $d=d(x)$. 
An orbismooth curve (as in (1b) of Theorem \ref{thm:Ko}) is a complex curve $D\subset X$ 
such that around $x$ it is of the form
$D=\{z_1=0\}$ or $D=\{z_2=0\}$.
Two orbismooth curves $D_1,D_2$ intersect nicely if at every intersection point $x\in D_1\cap D_2$, 
there is a chart $\CC^2/\ZZ_d$ at $x$ such that $D_1=\{(z_1,0)\}$ and $D_2=\{(0,z_2)\}$.

Here is a method of  constructing cyclic K\"ahler orbifolds.
\begin{proposition}[\cite{M}]\label{prop:sympl-orbi} 
Let $X$ be a singular K\"ahler $4$-manifold with cyclic singularities, and set of singular points $P$. 
Let $D_i$ be embedded orbismooth curves intersecting nicely. Take coefficients $m_i>1$ such that $\gcd(m_i,m_j)=1$, if $D_i$ and $D_j$ intersect. Then there exists a K\"ahler orbifold structure $X$ with isotropy surfaces $D_i$ of multiplicities $m_i$, and singular points $x\in P$ of multiplicity $m=d(x)\prod_{i\in I_x}m_i$, where $I_x=\{i\,|\,x\in D_i\}$.
\end{proposition}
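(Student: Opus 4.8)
The plan is to construct the orbifold charts locally and then glue them, using the underlying space $X$ together with the data of the divisors $D_i$ and multiplicities $m_i$ as the blueprint. First I would treat the three types of points separately. At a smooth point $x\in X$ lying on none of the $D_i$, the identity chart serves, with trivial isotropy. At a smooth point $x$ lying on exactly one $D_i$ (which near $x$ looks like $\{z_1=0\}\subset\CC^2$), I would take the chart $\CC^2\to\CC^2$, $(w_1,w_2)\mapsto(w_1^{m_i},w_2)=(z_1,z_2)$, with isotropy group $\ZZ_{m_i}$ acting by $(w_1,w_2)\mapsto(\zeta w_1,w_2)$, $\zeta=\exp(2\pi i/m_i)$; this is the standard branched cover ramified along the curve. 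At a point lying on the intersection $D_i\cap D_j$ (necessarily a pair, by niceness, and with $\gcd(m_i,m_j)=1$), I would take $(w_1,w_2)\mapsto(w_1^{m_i},w_2^{m_j})$ with isotropy $\ZZ_{m_i}\x\ZZ_{m_j}\iso\ZZ_{m_im_j}$. The interesting case is a cyclic quotient singular point $x$ with $d=d(x)$, acting as in \eqref{eqn:e1}: here I would compose the quotient presentation $\CC^2\to\CC^2/\ZZ_d$ with the branched covers along whichever axes $D_i$ ($i\in I_x$) pass through $x$. Concretely, on $\CC^2$ with coordinates $w=(w_1,w_2)$, let the group $\G_x$ be generated by the $\ZZ_d$-action \eqref{eqn:e1} together with the $\ZZ_{m_i}$ scalings on the appropriate coordinate axes; since the $m_i$ are pairwise coprime and $\gcd(e_k,d)=1$, these commute and generate a group of order $m=d(x)\prod_{i\in I_x}m_i$, giving the asserted isotropy.

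Second, I would verify compatibility of these charts on overlaps. Away from $P$ and the $D_i$ the charts are honest and the transition maps are the original holomorphic ones. Across a curve $D_i$ (away from singular points and other curves) the transition maps between a branched chart and an unbranched one are the standard ones for a cyclic branched cover, and they are biholomorphic where defined; the pairwise coprimality $\gcd(m_i,m_j)=1$ is exactly what is needed so that the $\ZZ_{m_i}$ and $\ZZ_{m_j}$ structures along two intersecting curves are mutually compatible (the local uniformizing group at the intersection is the product). This is where the hypotheses on the $m_i$ are used in an essential way. The Kähler form pulls back under each uniformizing map to a $\G_x$-invariant Kähler form on the chart, because the group acts by holomorphic isometries of the local model; patching these via a $\G$-invariant partition of unity subordinate to the orbifold atlas produces an orbifold Kähler metric, so the resulting orbifold is again Kähler.

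Third, I would record that by construction the isotropy locus is precisely $\bigcup D_i$ with the curve $D_i$ carrying isotropy $\ZZ_{m_i}$, and the finitely many points of $P$ acquire the stated multiplicities, which completes the statement. I expect the main obstacle to be the bookkeeping at a singular point $x\in P$ that also lies on one or two of the curves $D_i$: one must check that the $\ZZ_d$-action and the branched-cover constructions genuinely commute and assemble into a single cyclic (or product-of-coprime-cyclic, hence cyclic) group acting freely in codimension one, and that the resulting chart is compatible with the neighbouring curve-charts. Since the symplectic analogue of exactly this construction is carried out in \cite{M}, I would lean on that reference for the routine verifications and only indicate the modifications needed to keep the complex structure, noting (as the paper already does) that a Kähler orbifold is in particular almost-Kähler, hence symplectic, so the results of \cite{M} apply verbatim to the underlying symplectic orbifold and it remains only to observe that all the gluing maps were chosen holomorphic.
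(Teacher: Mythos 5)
The paper does not actually prove this proposition: it is quoted from \cite{M}, where the symplectic version is established, and the only remark the authors add is the one you also make at the end, namely that a K\"ahler orbifold is in particular almost-K\"ahler, hence symplectic, so the results of \cite{M} apply. Your overall strategy --- uniformizing charts built case by case (trivial charts off the branch locus, a cyclic branched cover $(w_1,w_2)\mapsto(w_1^{m_i},w_2)$ along a single $D_i$, a product of coprime cyclic covers at a nice intersection, a composite cover at a point of $P$) followed by a compatibility check on overlaps --- is the standard construction and matches the one in the reference in outline.

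There is, however, one concrete misstep at the points of $P$ lying on branch curves, which is precisely the case you flag as the main obstacle. The correct local uniformizing group there is the deck transformation group of the \emph{composite} cover $\CC^2_w\to\CC^2_z\to\CC^2_z/\ZZ_d$, i.e.\ the full preimage of $\ZZ_d$ under $(\alpha,\beta)\mapsto(\alpha^{m_1},\beta^{m_2})$; this is a (cyclic) extension of $\ZZ_d$ by $\ZZ_{m_1}\times\ZZ_{m_2}$, \emph{not} the subgroup of $\GL(2,\CC)$ generated by the matrix $\diag(e^{2\pi ie_1/d},e^{2\pi ie_2/d})$ together with the scalings $\diag(e^{2\pi i/m_1},1)$, $\diag(1,e^{2\pi i/m_2})$, as you assert. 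The two groups coincide only when $\gcd(d,m_i)=1$, and the paper genuinely needs the case $\gcd(d,m_i)>1$ (in Proposition \ref{prop:1-neg-pos-main} the curve $D$ with $m=2$ passes through the singular point of order $d=n$, which may be even). For a minimal counterexample take $d=2$, $e_1=e_2=1$, one branch curve $\{z_1=0\}$ with $m_1=2$: the correct group is $\ZZ_4=\la\diag(i,-1)\ra$, whose invariants $w_1^4,\,w_1^2w_2,\,w_2^2$ reproduce the $A_1$-singularity branched along $\{z_1=0\}$, whereas your group is $\{\diag(\pm1,\pm1)\}\cong\ZZ_2\times\ZZ_2$, which is not cyclic and whose quotient is a \emph{smooth} $\CC^2$ branched along both axes --- the wrong local model. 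Showing that the deck group of the composite is cyclic of order $d\prod_{i\in I_x}m_i$ does use $\gcd(m_i,m_j)=1$ and $\gcd(e_k,d)=1$, but via a count of elements of order $p$ in the extension, not via an internal direct product. A secondary point: the pullback of a K\"ahler form of $X$ under a branched uniformizing chart is degenerate along the branch divisor ($z_1=w_1^{m}$ pulls $dz_1\wedge d\bar z_1$ back to $m^2|w_1|^{2m-2}dw_1\wedge d\bar w_1$), so the orbifold K\"ahler form cannot simply be the pullback patched by a partition of unity; one must add invariant local corrections near $\bigcup D_i$, which is where the actual work in \cite{M} lies.
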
 

If $P=\emptyset$, then the family $D_i$ with multiplicities $m_i$ as in Proposition \ref{prop:sympl-orbi} defines a structure of a {\it smooth} cyclic K\"ahler orbifold.

An important basic tool of constructing cyclic K\"ahler  orbifolds in this work is by blowing-down complex surfaces along chains of smooth rational curves of negative self-intersection $<-1$
(i.e.\ no $(-1)$-curves). We freely use the theory of complex surfaces referring to \cite{BPV}, \cite{GS}, \cite{H}.

\begin{proposition}[\cite{BPV}] \label{prop:Hirz-Jung}
Consider the action of the cyclic group $\ZZ_m$ on $\CC^2$ given by $(z_1,z_2)\mapsto (\eta z_1,\eta^r z_2)$, where
$\eta=e^{2\pi i /m}$, $0<r<m$ and $\gcd(r,m)=1$.
 Then write a continuous fraction
 $$
  \frac{m}{r}=[b_1,\ldots, b_l]=b_1- \frac{1}{b_2-\frac{1}{b_3- \ldots}}
  $$
The resolution of $\CC^2/\ZZ_m$ has an exceptional divisor formed by a chain of 
smooth rational curves of self-intersection numbers
$-b_1,-b_2,\ldots,-b_l$.
\end{proposition}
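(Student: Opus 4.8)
The statement is the classical Hirzebruch--Jung resolution of a cyclic quotient singularity, and I would deduce it from toric geometry. First I would present $\CC^2/\ZZ_m$ as the affine toric surface attached to a two-dimensional cone $\sigma$ in the lattice $N=\ZZ^2$ whose edges are generated by $e_2$ and $m e_1-r e_2$: for the $\tfrac1m(1,r)$-action this cone gives exactly $\Spec$ of the ring of invariants, the two edges corresponding to the images of the coordinate axes of $\CC^2$. (Equivalently one may keep the first-quadrant cone and enlarge the lattice to $N'=\ZZ^2+\ZZ\cdot\tfrac1m(1,r)$; I would use whichever form keeps the bookkeeping lighter.)

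Next I would realize the minimal resolution as the toric variety of the fan obtained by subdividing $\sigma$ along all rays through primitive lattice vectors lying on the boundary of $\mathrm{conv}\big((\sigma\cap N)\setminus\{0\}\big)$. Listing these in order as $v_0=e_2,\ v_1,\dots,v_l,\ v_{l+1}=m e_1-r e_2$, the combinatorial core is that consecutive triples satisfy
$$v_{k-1}+v_{k+1}=b_k\,v_k\qquad(k=1,\dots,l)$$
with $b_k\geq 2$, and that $m/r=[b_1,\dots,b_l]$ in the notation of the proposition. I would prove this by induction, peeling off $v_1$: extremality of the convex-hull vertices forces $b_1=\lceil m/r\rceil\geq 2$, and the subcone $\langle v_1,v_{l+1}\rangle$ is $\GL(N)$-equivalent to the cone of the singularity $\tfrac1r(1,r')$ with $r'=b_1 r-m$, where $0<r'<r$ and $\gcd(r',r)=1$ --- which is precisely one step of the continued-fraction algorithm $(m,r)\mapsto(r,\,b_1 r-m)$. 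Smoothness of the resolution amounts to each adjacent pair $\langle v_k,v_{k+1}\rangle$ being a $\ZZ$-basis of $N$, and this propagates from $\det(v_0,v_1)=1$ via the same recursion.

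Finally I would read the geometry off the toric dictionary. Each interior ray $v_k$ ($1\leq k\leq l$) contributes a torus-invariant curve $E_k\cong\PP^1$; $E_k$ and $E_{k+1}$ meet transversally at the single fixed point of the smooth cone $\langle v_k,v_{k+1}\rangle$, while non-adjacent $E_k$ are disjoint, so the exceptional divisor is a chain of smooth rational curves. The standard toric self-intersection formula identifies $E_k^2$ with the coefficient $-b_k$ in $v_{k-1}+v_{k+1}=b_k v_k$, so the chain carries self-intersection numbers $-b_1,\dots,-b_l$. Since every $b_k\geq 2$, no $E_k$ is a $(-1)$-curve, hence this modification is the minimal resolution, which proves the statement.

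I expect the main obstacle to be exactly the number-theoretic heart of the second paragraph: showing that the extremal lattice points of $\sigma$ are governed by the negative continued fraction of $m/r$ with all entries $\geq 2$, and that the invariant intersection numbers come out as $-b_k$; everything else is translation through the toric dictionary. A toric-free alternative would be a direct induction on $m$ using (weighted) blow-ups of the singular point, with the same recursion $(m,r)\mapsto(r,\,b_1 r-m)$ reappearing as the effect of each blow-up on the singularity type; but this is the same computation in different dress, and I would take the toric route.
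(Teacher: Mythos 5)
The paper offers no proof of this proposition at all: it is quoted as a classical fact with a pointer to Barth--Hulek--Peters--Van de Ven, so there is nothing in the text to compare your argument against step by step. Your toric proof is correct and is one of the two standard ways to establish the Hirzebruch--Jung resolution. The presentation of the $\frac{1}{m}(1,r)$ quotient as the affine toric surface of the cone $\la e_2,\, m e_1-r e_2\ra$, the subdivision along the boundary vertices of the convex hull of the nonzero lattice points of the cone, the wall relations $v_{k-1}+v_{k+1}=b_k v_k$ with $b_k\geq 2$ encoding the negative continued fraction of $m/r$ via the recursion $(m,r)\mapsto(r,\,b_1r-m)$ (note $\gcd(b_1r-m,r)=\gcd(m,r)=1$, so the induction is legitimate), and the identification $E_k^2=-b_k$ are all as in the standard toric references; the propagation of smoothness from $\det(v_0,v_1)=\pm1$ through $v_{k+1}=b_kv_k-v_{k-1}$ is also right, and minimality is immediate since no $b_k$ equals $1$. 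By contrast, the cited source [BPV] follows Hirzebruch's original construction: the resolution is assembled by explicitly gluing copies of $\CC^2$ with monomial transition maps dictated by the same continued fraction, and one then checks that the result maps properly and birationally onto the quotient --- essentially your computation in pre-toric language. What your route buys is that convexity of the subdivided fan makes the inequalities $b_k\geq2$ and the minimality of the resolution transparent; the one place where care is genuinely needed is the step you yourself single out, namely the induction identifying the hull vertices with the continued-fraction data, and your reduction of $\la v_1,v_{l+1}\ra$ to the singularity $\frac{1}{r}(1,r')$ with $r'=b_1r-m$ is the correct way to run it.
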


\begin{proposition}[{\cite[Lemma 15]{CMST}}] \label{prop:constr-orbi} 
Conversely, let $X$ be a smooth complex surface containing a chain of smooth rational curves $E_1,\ldots, E_l$ of self-intersections
$-b_1,-b_2,\ldots$, $-b_l$, with all $b_i\geq 2$, intersecting transversally (so that $E_i\cap E_{i+1}$ are nodes, $i=1,\ldots, l-1$).
Let $\pi:X\to \bar X$ be the contraction of $E=E_1\cup\ldots \cup E_l$. Then $\bar X$ has a cyclic singularity at $p=\pi(E)$,
with an action given by Proposition \ref{prop:Hirz-Jung}. Moreover, if $D$ is a curve
intersecting transversally a tail of the chain (that is, either $E_1$ or $E_l$ at a non-nodal point),
then the push down curve $\bar D=\pi(D)$ is an orbismooth curve in $\bar X$. 
\end{proposition}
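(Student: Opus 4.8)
The plan is to produce the contraction $\pi$ from the negative definiteness of the chain, to identify the resulting singularity as a cyclic quotient singularity by reading the Hirzebruch--Jung description of Proposition \ref{prop:Hirz-Jung} backwards, and then to read off the local form of $\pi$ near a tail of the chain to see that $\bar D$ becomes a coordinate axis. First I would note that the intersection matrix of $E=E_1\cup\cdots\cup E_l$ is the tridiagonal matrix $Q$ with diagonal entries $-b_1,\dots,-b_l$ and with $1$'s on the two off-diagonals. Since every $b_i\geq 2$, the matrix $-Q$ is symmetric, irreducible, has positive diagonal, and is weakly diagonally dominant with strict dominance in its first row, hence positive definite; equivalently, its $k$-th leading principal minor is the numerator of the continued fraction $[b_1,\dots,b_k]$, a positive integer. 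So $Q$ is negative definite, and Grauert's contractibility criterion produces a normal complex surface $\bar X$, a point $p$, and a proper holomorphic map $\pi:X\to\bar X$ restricting to a biholomorphism $X\setminus E\to\bar X\setminus\{p\}$ with $\pi^{-1}(p)=E$; if $X$ is Kähler, then so is $\bar X$.

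Next, since no $E_i$ is a $(-1)$-curve, $\pi$ is the \emph{minimal} resolution of the germ $(\bar X,p)$, whose exceptional divisor is a chain of smooth rational curves with self-intersections $-b_1,\dots,-b_l$. By the Hirzebruch--Jung classification of normal surface singularities with such a dual graph — which is precisely Proposition \ref{prop:Hirz-Jung} combined with the uniqueness of the minimal resolution — the germ $(\bar X,p)$ is analytically isomorphic to $\CC^2/\ZZ_m$ with $m/r=[b_1,\dots,b_l]$ (so $0<r<m$, $\gcd(m,r)=1$) and $\ZZ_m$ acting as in \eqref{eqn:e1}; reading the chain from the opposite end only replaces $r$ by its inverse modulo $m$, i.e.\ interchanges $z_1$ and $z_2$. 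Fixing such an identification of a neighbourhood of $p$ with $\CC^2/\ZZ_m$ induces an identification of a neighbourhood of $E$ with the minimal resolution of $\CC^2/\ZZ_m$, under which $E_1$ (resp.\ $E_l$) meets the proper transform of the axis $\{z_2=0\}$ (resp.\ of $\{z_1=0\}$) transversally at a single smooth point that is not a node of the chain.

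For the assertion about $\bar D$: away from $p$ the map $\pi$ is a biholomorphism and $D$ is smooth, so $\bar D$ is smooth there, and only the germ at $p$ needs attention. Assume $D$ meets the tail $E_1$ (and not the rest of $E$) transversally at a non-nodal point. Working in the above local model and using the explicit Hirzebruch--Jung charts near $E_1$, one would push $D$ forward and verify that the transversality of $D$ to $E_1$ — as opposed to tangency, which would create a cusp-like singularity on the image — forces $\bar D$ to be smooth at $p$ and its preimage in $\CC^2$ to be a smooth $\ZZ_m$-invariant curve germ through the origin transverse to one of the coordinate axes. The Taylor coefficients of the defining function of such a germ are supported on the $\ZZ_m$-semi-invariant monomials of the appropriate weight, which is exactly the condition that makes the obvious straightening $\ZZ_m$-equivariant; it then descends to an automorphism of the germ $(\CC^2/\ZZ_m,0)$ carrying $\bar D$ onto $\{z_2=0\}$. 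Hence $\bar D$ is a coordinate axis in a suitable cyclic chart, i.e.\ orbismooth; the case of the tail $E_l$ follows by exchanging $z_1$ and $z_2$.

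I expect this last step to be the main obstacle: it requires controlling the local shape of the contraction $\pi$ along a tail through the combinatorics of the Hirzebruch--Jung resolution, and then checking that transversality of $D$ to $E_1$ is precisely the hypothesis that guarantees at once that $\bar D$ is unibranch and smooth at $p$ and that the germ it pulls back to is equivariantly linearisable. The first two steps — the contraction and the identification of the singularity — are standard facts about negative-definite curve configurations and cyclic quotient singularities.
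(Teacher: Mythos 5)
The paper does not prove this statement at all: it is quoted verbatim from \cite[Lemma 15]{CMST}, so there is no internal proof to compare against. Measured against the standard argument (which is what \cite{CMST} carries out), your outline is the right one: negative definiteness of the tridiagonal intersection matrix (your diagonal-dominance argument, or equivalently the positivity of the numerators of $[b_1,\dots,b_k]$, is correct), Grauert/Artin contraction to a normal germ, identification of that germ as $\CC^2/\ZZ_m$ with $m/r=[b_1,\dots,b_l]$ via the fact that $\pi$ is the minimal resolution and the Hirzebruch--Jung classification, and then a local analysis at the tail. The first two-and-a-half steps are standard and you handle them correctly.

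The genuine gap is exactly where you say you expect it: the claim that the preimage of $\bar D$ under the quotient map $\CC^2\to\CC^2/\ZZ_m$ is a \emph{single smooth} $\ZZ_m$-invariant branch of the form $\{z_2=g(z_1)\}$ with $g$ a semi-invariant of weight $r$. Once that is granted, your equivariant straightening $(z_1,z_2)\mapsto(z_1,z_2-g(z_1))$ does finish the proof, and it is the correct last move. But the granted statement is the entire content of the lemma's second half and you only assert it. Two separate things need to be checked: (i) unibranchness of $\bar D$ at $p$ and transitivity of the $\ZZ_m$-action on the branches of the preimage --- this does follow cheaply from $D\cdot E=D\cdot E_1=1$, since the branches of $\bar D$ at $p$ are in bijection with the points of $D$ over $p$; and (ii) smoothness of the preimage germ and identification of its tangent direction, which does \emph{not} follow from transversality alone by soft arguments and requires writing the contraction explicitly in the affine toric chart of the resolution containing $E_1\setminus E_2$ (where $\pi$ is given by monomials in the chart coordinates $(u,v)$ with $E_1=\{v=0\}$), substituting $D=\{u=h(v)\}$, and reading off the image. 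Note also that for $r\geq 2$ the resulting germ $\{z_2=g(z_1)\}$, $g(z_1)=c_rz_1^r+\cdots$, is \emph{tangent} to $\{z_2=0\}$ rather than generic, so the phrase ``transverse to one of the coordinate axes'' should be stated carefully: what one gets is transversality to $\{z_1=0\}$, i.e.\ a graph over the $z_1$-axis, which is precisely what the semi-invariance of $g$ and the straightening need. In short: right skeleton, but the one nontrivial computation is left as an expectation rather than carried out.
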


We also need the self-intersection of the orbismooth curve of Proposition \ref{prop:constr-orbi}.
Take $D$ intersecting $E_1$ and $\bar D=\pi(D)$ (the case where $D$ intersects $E_l$
can be treated by reversing the chain of rational curves). Let $[b_1,\ldots, b_l]=\frac{m}{r}$. Then 
 \begin{equation}\label{eqn:barD2}
 \bar D^2= D^2+ \frac{r}{m}\, .
 \end{equation}
We check this as follows. Let $\beta_i=[b_i,\ldots, b_l]$, so that $\beta_i=b_i-\frac1{\beta_{i+1}}$. We prove the assertion 
by induction on $l$.
 Blow-down the chain of  $l-1$ curves $E_2,\ldots, E_l$, getting a blow-down map 
 $$\varpi:X\to \hat X$$ 
 with a singular point $q$. 
 The curve $\hat E_1=\varpi(E_1)$ has self-intersection $\hat E_1^2=-b_1+\frac{1}{\beta_2}=-\beta_1$ by the induction 
 hypothesis while $\hat D = \varphi(D)\cong D$, so $\hat D^2 = D^2$.
 Contracting $\hat E_1$ yields another map
 \[
 \psi: \hat X \to \bar X
 \]
 such that $\pi=\psi\circ \varpi$, and $\bar D = \psi(\hat D)$.
 Take the pull-back $\psi^*(\bar D)=\hat D+x \hat E_1$, and compute $x\in \QQ$ by using 
  $0=\hat E_1\cdot \psi^*(\bar D)= 1 -x\beta_1$, so $x=\frac{1}{\beta_1}$ and $\psi^*(\bar D)=D+\frac{1}{\beta_1} \hat E_1$.
  Now $\bar D^2=\psi^*(\bar D)^2=\hat D\cdot \psi^*(\bar D)= \hat D^2+x = D^2+\frac{1}{\beta_1}$, as stated.

The basic method of constructing Sasakian structures is the method of Seifert bundles \cite{BG}, \cite{K}, \cite{K1}.

\begin{definition} \label{def:Seifert-bdle}
Let $X$ be a cyclic oriented $4$-orbifold. A Seifert bundle over $X$ is an oriented $5$-manifold $M$ endowed with a smooth $S^1$-action and a continuous map $\pi: M\rightarrow X$ such that for an orbifold chart $(U,\tilde{U},\ZZ_m,\varphi)$ there is a commutative diagram
$$
\CD
(S^1\times \tilde{U})/\ZZ_m @>{\cong}>> \pi^{-1}(U)\\
@VVV @V{\pi}VV\\
\tilde U/\ZZ_m @>{\cong}>> U
\endCD
$$
where the action of $\ZZ_m$ on $S^1$ is by multiplication by $\exp (2\pi i/m)$, and the top diffeomorphism is $S^1$-equivariant.
\end{definition}

The relation between quasi-regular Sasakian manifolds and cyclic orbifolds is given by the following result.

\begin{theorem}[{\cite[Theorems 7.5.1 and 7.5.2]{BG}}]\label{thm:sas-orb} 
Let $M$ be a manifold endowed with a quasi-regular Sasakian structure $(\eta,\xi, \phi,g)$. Then the space of leaves of the foliation $\mathcal{F}_{\xi}$  determined by the Reeb vector field has a natural structure of a cyclic K\"ahler orbifold, and the projection $M\rightarrow X$ is a Seifert bundle. Conversely, if $(X,\omega)$ is a cyclic K\"ahler orbifold and $M$ is the total space of the Seifert bundle
 determined by the class $[\omega]$ of the K\"ahler form, then $M$ admits a quasi-regular Sasakian structure.
\end{theorem}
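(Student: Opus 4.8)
\medskip
\noindent\textbf{Proof strategy.}
Both implications are the orbifold refinement of the classical Boothby--Wang correspondence between regular contact manifolds and integral symplectic manifolds, so the plan is to transcribe that argument into the transverse (foliated) setting, treating the two directions separately; throughout I would use that a K\"ahler orbifold is in particular symplectic, as recalled before Proposition \ref{prop:sympl-orbi}.

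\emph{From Sasakian structures to orbifolds.} I would begin with the $K$-contact hypothesis, which makes $\xi$ a unit Killing field, so its flow is a one-parameter group of isometries; quasi-regularity then says that every leaf of $\mathcal{F}_{\xi}$ is a compact circle with locally bounded period, and the standard averaging argument promotes this $\RR$-action to a locally free $S^1$-action whose orbits are exactly the leaves. The next step is to identify the orbit space $X=M/S^1$: a slice argument near a leaf with isotropy $\ZZ_m\subset S^1$ produces a chart $(\tilde U,\ZZ_m,\varphi)$ fitting the square of Definition \ref{def:Seifert-bdle}, so all isotropy groups are cyclic, $X$ is a $4$-dimensional cyclic orbifold, and $\pi\colon M\to X$ is a Seifert bundle. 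Then I would note that $\eta$ is $S^1$-invariant with $\eta(\xi)=1$ and $\iota_\xi d\eta=0$, hence a connection $1$-form on this bundle, and that $d\eta$ is basic, so $d\eta=\pi^*\omega$ for a closed orbifold $2$-form $\omega$ on $X$ which is symplectic since $\eta\wedge(d\eta)^n\ne 0$. Finally I would push $\phi$ and $g$ down: since $\phi$ preserves $\ker\eta$ and is $S^1$-invariant it descends to an almost complex structure $J$ on $X$ compatible with $\omega$, and the normality identity $N_\phi=-d\eta\otimes\xi$ is precisely the vanishing of the transverse Nijenhuis tensor, so $J$ is integrable and $(X,\omega,J)$ is a cyclic K\"ahler orbifold.

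\emph{From orbifolds to Sasakian structures.} For the converse I would run the construction backwards. Given a cyclic K\"ahler orbifold $(X,\omega)$ with $[\omega]$ an integral orbifold cohomology class (implicit in the phrase ``the Seifert bundle determined by $[\omega]$''), build $M$ by gluing the local pieces $(S^1\times\tilde U)/\ZZ_m$ of Definition \ref{def:Seifert-bdle} along the Seifert data encoded by $[\omega]$; because $X$ is cyclic, the $\ZZ_m$-action on the $S^1$-factor is faithful, and this is exactly what forces the glued total space to be a \emph{smooth} manifold carrying an $S^1$-action with quotient $X$. I would then choose a connection $1$-form $\eta$ with $d\eta=\pi^*\omega$, let $\xi$ generate the $S^1$-action normalized by $\eta(\xi)=1$, set $\phi$ to be $J$ on $\ker\eta$ and $0$ on $\xi$, and $g=\pi^*g_X+\eta\otimes\eta$ with $g_X=\omega(\cdot,J\cdot)$, and verify directly that $(\eta,\xi,\phi,g)$ is a contact metric structure, that it is $K$-contact because the $S^1$-action preserves $\eta$ and $g$, and that it is normal because $J$ is integrable; hence it is Sasakian, and quasi-regular with the integer $\delta$ of the definition equal to the least common multiple of the orders of the isotropy groups of $X$.

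\emph{Main obstacle.} I expect the genuinely delicate points to lie at the two ends of this orbifold bookkeeping. In the first direction, verifying that the leaf space is a \emph{cyclic} orbifold requires controlling the slice representation of the Reeb flow near a singular leaf, so that the local model is exactly $\tilde U/\ZZ_m$ with $\ZZ_m\hookrightarrow S^1$ acting with coprime weights as in \eqref{eqn:e1}. In the converse, the nontrivial statement is that the glued space $M$ is a manifold rather than a mere orbifold---equivalently, that each local model $(S^1\times\tilde U)/\ZZ_m$ is smooth---which is exactly where the cyclicity of $X$ and the compatibility of the Seifert invariants with $[\omega]$ enter. Everything else is a routine transcription of the Boothby--Wang argument, and the full details are in \cite[\S7.5]{BG}.
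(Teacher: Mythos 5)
The paper does not prove this statement; it is imported verbatim from Boyer--Galicki \cite[Theorems 7.5.1 and 7.5.2]{BG}, whose proof is precisely the orbifold Boothby--Wang argument you outline. Your sketch (the Killing Reeb field generating a locally free $S^1$-action, the slice theorem producing the cyclic local models of Definition \ref{def:Seifert-bdle}, descent of the transverse K\"ahler structure via normality, and the reverse gluing of $(S^1\times\tilde U)/\ZZ_m$ with a connection form of curvature $\pi^*\omega$) matches that source's proof in both directions, so there is nothing to correct.
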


In particular, regular Sasakian structures correspond to circle bundles $M\rightarrow (X,\omega)$ over K\"ahler manifolds with integral K\"ahler form $\omega$. These bundles are determined by the first Chern class equal to $[\omega]$. They are often called the Boothby-Wang fibrations.
Note that by a theorem of Rukimbira \cite{R}, any manifold which admits a Sasakian structure, admits a quasi-regular one. In this work we are interested in the existence questions, and all Sasakian structures constructed in this article will be quasi-regular. Therefore, our approach amounts to constructing Seifert bundles over cyclic K\"ahler orbifolds with prescribed properties. It is also useful to have an intermediate class of Sasakian structures  \cite{K1}, \cite{MRT}, 
\cite{MT}. A Sasakian structure is called {\it semi-regular}, if it is determined by a Seifert bundle $M\rightarrow X$ whose base orbifold is smooth.
  
Let us mention the following. For the convenience of references, we will interchangebly follow the terminology and notation of \cite{BG}, \cite{K}, \cite{K1}. If $X$ is a cyclic orbifold with singular set $P$ and a family of surfaces $D_i$ we will say that we are given a divisor $\mathop{\cup} D_i$ with multiplicities $m_i>1$. The formal sum
  \begin{equation}\label{eqn:Delta}
  \Delta=\sum_i\left(1-{\frac 1m_i}\right)D_i
  \end{equation}
will be called the {\it branch divisor}.

Given a cyclic K\"ahler orbifold $X$, with singular points $P$, and branch divisor (\ref{eqn:Delta}), we need an extra piece of information in order
to determine a Seifert bundle $M\to X$. 
For each point $x\in P$ with multiplicity $m=d m_1m_2$ as in Proposition \ref{prop:sympl-orbi},
we have an adapted chart $U\subset \CC^2/\ZZ_m$ with action 
   \begin{equation}\label{eqn:Zm}
  \exp(2\pi i/m)(z_1,z_2)=(e^{2\pi i j_1/m}z_1,e^{2\pi i j_2/m}z_2),
  \end{equation}
so that the local model of the Seifert fibration is as in Definition \ref{def:Seifert-bdle}. 
We call $j_x=(m,j_1,j_2)$ the {\it local invariants} at $x\in P$. Note that this relates to the original K\"ahler structure of $X$ given
by (\ref{eqn:e1}) by the formulas
$j_1=m_1e_1$ and $j_2=m_2e_2$.
Assume that $D_1=\{(z_1,0)\}$ is one of the isotropy surfaces with multiplicity $m_1$. 
The local invariant of $D_1$ is by definition, $j_{D_1}=(m_1,j_2)$, where $j_2$ is considered modulo $m_1$. 
This also yields a {\it compatibility condition} for the local invariants of singular points and isotropy surfaces.

In order to construct Seifert bundles, we need to assign local invariants to each of the singular points $x\in P$ and each
of the isotropy surfaces $D_i\subset X$, in a compatible way. For this we need to choose for each $D_i$ some $j_i$
with $\gcd(j_i,m_i)=1$. Then we can assign local invariants at the singular points using the following result.
In \cite{M} it is stated in the case that the isotropy surfaces $D_i$ are disjoint, but it is also valid in the singular points
which do not lie in the intersection of two surfaces.

\begin{proposition}[{\cite[Proposition 25]{M}}] \label{prop:25-M}
Suppose that $X$ is a cyclic $4$-orbifold 
such that each singular point $x\in P$ lies in a single isotropy surface $D_i$,
if any.
Take integers $j_i$ with $\gcd(m_i,j_i) = 1$ for each $D_i$.  Then there exist local invariants for $X$.
\end{proposition}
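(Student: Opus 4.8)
The plan is to choose the local invariants one singular point at a time, using that the compatibility conditions relating a point $x\in P$ to the isotropy surfaces involve only the surfaces through $x$ — of which there is at most one by hypothesis — and that the choices at distinct points of $P$ are independent. Recall that a set of local invariants is an assignment $x\mapsto j_x=(m,j_1,j_2)$ realizing the cyclic K\"ahler orbifold germ of $X$ at each $x\in P$ in the normal form (\ref{eqn:Zm}) and, for every isotropy surface $D_i$ through $x$, restricting to the prescribed $j_i$ via $j_{D_i}=(m_i,\,\cdot\,)$. Hence it suffices to produce a compatible $j_x$ in the two cases (i) $x\notin\bigcup_i D_i$ and (ii) $x\in D_i$ for exactly one index $i$.

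Case (i) is immediate: then $m=d(x)$, and one takes $j_x=(d(x),e_1,e_2)$ with $(e_1,e_2)$ the weights of (\ref{eqn:e1}). Since $\gcd(e_1,d(x))=\gcd(e_2,d(x))=1$, the local model $(S^1\times\CC^2)/\ZZ_{d(x)}$ of Definition \ref{def:Seifert-bdle} is a smooth $5$-manifold (the $\ZZ_{d(x)}$-action is free on the $S^1$-factor), so this is an admissible local invariant, and there is no surface-compatibility to impose. In case (ii) work in an adapted chart with $D_i=\{z_2=0\}$; then $m=d(x)\,m_i$, and unwinding Proposition \ref{prop:sympl-orbi} together with the relation $j_1=m_1e_1$, $j_2=m_2e_2$ recorded just before the proposition (specialized to the present single-surface situation, where the absent second multiplicity equals $1$), the admissible triples at $x$ are those of the form $j_x=(m,\,tm_ie_1,\,te_2)$ with $t\in(\ZZ/m\ZZ)^\times$; here $t$ records the identification of the orbifold group $\ZZ_m$ with $\langle e^{2\pi i/m}\rangle\subset S^1$ acting on the fibre. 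The requirement that $D_i$ carry multiplicity exactly $m_i$ forces the normal weight to satisfy $\gcd(j_2,m_i)=1$, hence $\gcd(e_2,m_i)=1$; thus the compatibility condition $j_2\equiv j_i\pmod{m_i}$ becomes the single congruence $te_2\equiv j_i\pmod{m_i}$ on the unit $t$, whose right-hand side is a unit modulo $m_i$ because $\gcd(j_i,m_i)=1$.

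It then remains to solve this congruence with $t$ a unit modulo the larger number $m=d(x)\,m_i$, i.e.\ to invoke the surjectivity of the reduction homomorphism $(\ZZ/m\ZZ)^\times\twoheadrightarrow(\ZZ/m_i\ZZ)^\times$, which holds since $m_i\mid m$ — concretely, by the Chinese Remainder Theorem one lifts the prescribed residue prime-power by prime-power, the primes dividing $m_i$ contributing units automatically and the primes dividing $m$ but not $m_i$ being unconstrained. Choosing such a $t$ yields $j_x$ in case (ii), and carrying this out at every $x\in P$ produces the desired local invariants. I expect the genuinely delicate step to be the orbifold bookkeeping that pins down the admissible family $\{(m,tm_ie_1,te_2)\}$ at $x$ from the germ of $X$ and checks that the data entering the congruence are units mod $m_i$; this is also exactly where the hypothesis is used, since a point lying on two surfaces $D_i,D_j$ would impose the two congruences $te_2\equiv j_i\pmod{m_i}$ and $te_1\equiv j_j\pmod{m_j}$ (up to relabelling the adapted coordinates), jointly solvable only under the coprimality $\gcd(m_i,m_j)=1$ of Proposition \ref{prop:sympl-orbi} — a case lying outside the present statement.
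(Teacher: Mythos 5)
Your argument is correct, but note that the paper itself offers no proof of this statement to compare against: Proposition \ref{prop:25-M} is imported verbatim from \cite[Proposition 25]{M}, and the only original content in the surrounding text is the remark that the result, stated in \cite{M} for disjoint isotropy surfaces, remains valid at singular points not lying on the intersection of two surfaces --- exactly the hypothesis you isolate. So your write-up is a reconstruction of the cited proof rather than an alternative to anything in this paper. As such it is sound: the reduction to one congruence per singular point, the identification of the admissible triples at $x$ as the orbit $\{(m,\,t\,m_ie_1,\,t\,e_2)\,:\,t\in(\ZZ/m\ZZ)^\times\}$ under change of generator of $\ZZ_m$, and the final appeal to the surjectivity of $(\ZZ/m\ZZ)^\times\to(\ZZ/m_i\ZZ)^\times$ for $m_i\mid m$ are precisely the mechanism that makes the statement true, and you correctly locate where the single-surface hypothesis enters (one congruence on $t$ instead of two).

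One small bookkeeping correction: the coprimality $\gcd(j_2,m_i)=1$ of the normal weight is not forced by ``$D_i$ carries multiplicity exactly $m_i$'' --- that condition governs the tangential weight, namely $\gcd(j_1,m)=m_i$. What gives $\gcd(j_2,m)=1$ (hence $\gcd(e_2,m_i)=1$) is that the transverse axis $\{z_1=0\}$ of the adapted chart is \emph{not} an isotropy surface, together with effectiveness of the local $\ZZ_m$-action; equivalently it is built into the construction of Proposition \ref{prop:sympl-orbi}. The needed fact is true, so this does not affect the validity of your congruence $te_2\equiv j_i\pmod{m_i}$, but the justification should be attributed to the right condition. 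A last cosmetic point: in your closing remark, the two congruences arising at a point on $D_i\cap D_j$ are jointly solvable \emph{because} $\gcd(m_i,m_j)=1$ is already imposed in Proposition \ref{prop:sympl-orbi}, so the restriction to a single surface in the statement is a matter of how far \cite{M} and this paper need the result, not of where the argument genuinely breaks.
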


Once compatible local invariants are fixed, a Seifert bundle is determined by its orbifold Chern class.
Given a Seifert bundle $\pi: M\rightarrow X$, the order of a stabilizer (in $S^1$) of any point $p$ in the fiber over $x\in X$ 
is denoted by $m=m(x)$, as in (\ref{eqn:Zm}).

\begin{definition} For a Seifert bundle $M\rightarrow X$ define the first Chern class as follows. Let $l=\operatorname{lcm}(m(x)\,|\,x\in X)$. 
Denote by $M/l$ the quotient of $M$ by $\ZZ_l\subset S^1$. Then $M/l\rightarrow X$ is a circle bundle with the first Chern class $c_1(M/l)\in H^2(X,\ZZ)$. 
Define
$$c_1(M)={\frac 1l}c_1(M/l)\in H^2(X,\mathbb{Q}).$$
\end{definition}

In the proofs of our results we basically need to construct a Seifert bundle $M\rightarrow X$ as in Proposition \ref{prop:c1} such that $H_1(M,\ZZ)=0$.
In order to explain the construction, we recall some results from \cite{M}.
  As before, $X$ is a $4$-dimensional  cyclic orbifold. Let $p_j$ be a cyclic isotropy point of some order $d_j>0$ and $P$ be the set of all such points. Consider small balls around all the points $p_j\in P$, say, $B=\sqcup B_j$. Every $L_j=\partial B_j$ is a lens space of order $d_j$. Let $L=\sqcup L_j$. From the Mayer-Vietoris exact sequence of $X-B$ and $\bar{B}$ and the equalities $H^1(L_j,\ZZ)=0,\,H^2(L_j,\ZZ)=\ZZ_{d_j}$ one derives an exact sequence
$$0\rightarrow H^2(X,\ZZ)\rightarrow H^2(X-P,\ZZ)\rightarrow \mathop{\oplus}_j\ZZ_{d_j}.$$ 
Let $X^o=X-B$. It is a manifold with boundary $L$. There is the Poincar\'e duality
$$H^2(X^o,\ZZ)\times H^2(X^o,L,\ZZ)\rightarrow H^4(X^o,L,\ZZ).$$
There are isomorphisms
$$H^2(X-P,\ZZ)=H^2(X^o,\ZZ)$$
and
$$H^k(X^o,L,\ZZ)=H^k(X,B,\ZZ)=H^k(X,P,\ZZ)=H^k(X,\ZZ),\,k\geq 2,$$
since the dimension of $P$ is zero. Hence the Poincar\'e duality is a perfect pairing
$$H^2(X-P,\ZZ)\times H^2(X,\ZZ)\rightarrow \ZZ.$$
In particular, the class $[D_i]\in H_2(X,\ZZ)$ yields a map $H^2(X-P,\ZZ)\rightarrow\ZZ$, and, therefore, a cohomology class in $H^2(X,\mathbb{Q})$ via the inclusion 
$$H^2(X,\ZZ)\subset H^2(X-P,\ZZ).$$
 
The first orbifold Chern class of the Seifert bundle can be calculated using the formula given by the following result, that we state
only for complex orbifolds.

\begin{proposition}[\cite{M}, Proposition 35]\label{prop:c1} Let $X$ be a cyclic $4$-orbifold with a complex structure 
and $D_i\subset X$ complex curves of $X$ which intersect transversally. 
Let $m_i>1$ such that $\gcd(m_i,m_j)=1$ if $D_i$ and $D_j$ intersect. Suppose that there are given local invariants $(m_i,j_i)$ for each $D_i$ and $j_p$ for every singular point $p$, which are compatible. Choose any $0<b_i<m_i$ such that $j_ib_i\equiv 1 (\operatorname{mod}\,m_i).$   Let ${B}$ be a complex line bundle over $X$. Then there exists a Seifert bundle $M\rightarrow X$ with the given local invariants and the first orbifold  Chern class
  $$c_1(M)=c_1({B})+\sum_i\frac{b_i}{m_i}[D_i].$$
The set of all such Seifert bundles forms a principal homogeneous space under $H^2(X,\ZZ)$, where the action corresponds to changing ${B}$.

Moreover, if $X$ is a K\"ahler cyclic orbifold and $c_1(M)=[\omega]$ for the orbifold K\"ahler form, then $M$ is Sasakian.
\end{proposition}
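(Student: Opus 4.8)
The plan is to build $M$ by gluing standard local Seifert models over the singular locus to a genuine principal $S^1$-bundle over the complement, then to read off the orbifold Chern class of the result, and finally to deduce the torsor statement and the Sasakian conclusion from Theorem \ref{thm:sas-orb}. First I would fix the local models. Over a tubular neighbourhood $N_i$ of an isotropy surface $D_i$ — identified with a disk sub-bundle of its normal bundle, on which $\ZZ_{m_i}$ acts by $e^{2\pi i j_i/m_i}$ on the normal fibre (this being the meaning of the local invariant $(m_i,j_i)$) and trivially along $D_i$ — the model of Definition \ref{def:Seifert-bdle} is $(S^1\times\widetilde{N_i})/\ZZ_{m_i}$ with $\ZZ_{m_i}$ acting on $S^1$ by $e^{2\pi i/m_i}$. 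Around an isolated singular point $p\in P$, and around a point where $D_i$ meets $D_j$ or meets a point of $P$, the compatible invariants $j_p$ give in the same way the combined models; the hypotheses $\gcd(m_i,m_j)=1$ when $D_i\cap D_j\neq\emptyset$, together with compatibility of the $(m_i,j_i)$ with the $j_p$, are exactly what make these local pieces mutually consistent on overlaps. Deleting the interiors of all these neighbourhoods from $X$ leaves a compact smooth manifold with boundary $X^o$, whose boundary is a disjoint union of lens spaces and of circle bundles over the $\partial D_i$, and over which a Seifert bundle is simply an ordinary principal $S^1$-bundle.

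The core step is to glue these local models to an $S^1$-bundle over $X^o$ along the common boundary and to identify which orbifold Chern classes arise. Here I would use the Mayer--Vietoris machinery recalled just before the statement: the exact sequence $0\to H^2(X,\ZZ)\to H^2(X-P,\ZZ)\to\bigoplus_j\ZZ_{d_j}$ and its analogue near the divisor neighbourhoods show that, once the boundary restrictions dictated by the local models are fixed, the ordinary $S^1$-bundles on $X^o$ restricting to them form a principal homogeneous space under $H^2(X,\ZZ)$, the action being realized by tensoring with the circle bundle of a line bundle $B$. Choosing $B=\cO_X$ pins down one Seifert bundle $M$; its orbifold Chern class I would compute by forming the honest circle bundle $M/l$ (the quotient of $M$ by $\ZZ_l\subset S^1$, $l=\operatorname{lcm}$ of the local orders), restricting to $X-P$, and pairing against $H^2(X,\ZZ)$ via the perfect pairing $H^2(X-P,\ZZ)\times H^2(X,\ZZ)\to\ZZ$ together with the description of $[D_i]\in H^2(X,\QQ)$ given above. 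The upshot is $c_1(M/l)=\sum_i\frac{lb_i}{m_i}[D_i]$, hence $c_1(M)=\sum_i\frac{b_i}{m_i}[D_i]$; the appearance of $b_i$ rather than $j_i$ is precisely the bookkeeping $b_ij_i\equiv1\pmod{m_i}$ that converts the wrapping exponent of the local model into its contribution after extracting the $l$-th root. For general $B$ one takes the tensor product of $M$ with the circle bundle of $B$, which leaves every local invariant unchanged — the local $\ZZ_m$-action is trivial on the $B$-factor — and shifts $c_1$ by $c_1(B)$.

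For the torsor statement, given two Seifert bundles $M,M'$ with the same local invariants, their fibrewise difference is a Seifert bundle all of whose local invariants cancel, hence an ordinary $S^1$-bundle over $X$, i.e.\ a class in $H^2(X,\ZZ)$; this inverts the tensoring operation, so the set of Seifert bundles with the prescribed local invariants is a principal homogeneous space under $H^2(X,\ZZ)$ acting through $B$, as claimed. Finally, if $X$ is a cyclic K\"ahler orbifold and $c_1(M)=[\omega]$ for the orbifold K\"ahler form, the Sasakian assertion is immediate from Theorem \ref{thm:sas-orb}. The step I expect to be the main obstacle is the gluing in the second paragraph: one has to track the torsion in the cohomology of the boundary lens spaces and of the $\partial N_i$ and verify that the relevant restriction and connecting maps have exactly the right images, so that a bundle with the prescribed boundary data exists and its indeterminacy is precisely $H^2(X,\ZZ)$ — this is where the coprimality of the $m_i$ and the compatibility of the local invariants genuinely do the work.
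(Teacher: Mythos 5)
The paper offers no proof of this statement: Proposition \ref{prop:c1} is quoted verbatim from \cite{M} (Proposition 35), so there is no internal argument to compare yours against. That said, your outline --- gluing the standard local models $(S^1\times\tilde U)/\ZZ_m$ dictated by the local invariants to an honest circle bundle over the complement, using the Mayer--Vietoris/lens-space analysis to see that the admissible extensions form an $H^2(X,\ZZ)$-torsor realized by twisting by $B$, passing to $M/l$ to read off $c_1$ with the coefficient $b_i/m_i$ arising from $b_ij_i\equiv 1\ (\operatorname{mod}\,m_i)$, and invoking Theorem \ref{thm:sas-orb} for the Sasakian claim --- is precisely the construction carried out in \cite{M}, and the step you flag as the main obstacle (matching the boundary data on the lens spaces and on the $\partial N_i$, where the coprimality and compatibility hypotheses enter) is indeed where the substance of that proof lies.
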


Note that $[D_i]$ are understood as cohomology classes in $H^2(X,\mathbb{Q})$ according to the explanation before Proposition \ref{prop:c1}.

\begin{proposition}[{\cite[Lemma 34]{M}}]\label{prop:lemma34} 
Assume that we are given a Seifert bundle $M\rightarrow X$ over a cyclic orbifold with the set of isotropy points $P$ and 
branch divisor $\sum (1-{\frac 1{m_i}})D_i$. Let $\mu=\operatorname{lcm}(m_i)$. Then $c_1(M/\mu)=\mu \, c_1(M)$ 
is integral in $H^2(X-P,\ZZ)$.
\end{proposition}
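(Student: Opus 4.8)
The plan is to reduce the claim to the complement of the finitely many isotropy points, where $\mu$ already kills every stabilizer, so that the quotient $M/\mu$ becomes a genuine circle bundle.

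Concretely, I would set $X^o = X - P$ and pass to the restricted Seifert bundle $M^o = \pi^{-1}(X^o) \to X^o$ with its induced $S^1$-action. Away from the finite set $P$ the underlying space of $X$ is a manifold, and the orbifold locus of $X$ consists only of the surfaces $D_i \cap X^o$, each carrying $\ZZ_{m_i}$-isotropy; moreover any intersection point $D_i \cap D_j$ lies in $P$, so these surfaces are pairwise disjoint on $X^o$. Hence for every $x \in X^o$ the stabilizer order $m(x)$ as in (\ref{eqn:Zm}) is $1$ or one of the $m_i$, in all cases a divisor of $\mu = \lcm(m_i)$, while on a generic point of $D_i$ the stabilizer is exactly $\ZZ_{m_i}$; therefore the least common multiple of all stabilizer orders over $X^o$ equals $\mu$. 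Since every point stabilizer of $M^o$ sits inside $\ZZ_\mu \subset S^1$, the action of $S^1/\ZZ_\mu \cong S^1$ on $N := M^o/\ZZ_\mu$ is free, so $N \to X^o$ is an honest principal circle bundle over the manifold $X^o$ and carries a genuine Chern class $c_1(N) \in H^2(X^o, \ZZ) = H^2(X - P, \ZZ)$. By the definition of the orbifold Chern class (using that $\mu$ is the lcm of stabilizer orders over $X^o$) one gets $c_1(M^o) = \tfrac{1}{\mu}\, c_1(N)$, i.e. $c_1(N) = \mu\, c_1(M^o)$; and since $c_1(M^o)$ is the restriction of $c_1(M) \in H^2(X, \QQ)$, this exhibits $\mu\, c_1(M) = c_1(M/\mu)$, read on $X - P$, as the integral class $c_1(N)$, which is the assertion.

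The one point that needs care is the interplay between two different least common multiples: the integer $l = \lcm(m(x) \mid x \in X)$ used to define $c_1(M)$ over all of $X$ may be a proper multiple of $\mu$, because of the extra factors $d(x)$ at the points of $P$, whereas over $X - P$ the relevant lcm is exactly $\mu$. To move between the two one uses that the orbifold Chern class is insensitive to which multiple of the lcm is chosen: for a principal $S^1$-bundle $N$ and $\ZZ_k \subset S^1$ acting freely one has $c_1(N/\ZZ_k) = k\, c_1(N)$ (e.g.\ by comparing transition functions), hence $\tfrac{1}{kl}\, c_1(M/kl) = \tfrac{1}{l}\, c_1(M/l)$, and this identity is compatible with restriction from $X$ to $X - P$. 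With that in hand the argument is just an unwinding of the definitions of a Seifert bundle and of its orbifold first Chern class.
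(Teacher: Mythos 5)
The paper does not prove this proposition; it is imported verbatim from \cite{M} (Lemma 34), so there is no internal proof to compare against. Your argument is correct and is the standard one: over $X^o=X-P$ the only isotropy comes from the (there pairwise disjoint) surfaces $D_i$, so all stabilizers lie in $\ZZ_\mu$ and $M^o/\ZZ_\mu\to X^o$ is an honest principal circle bundle with integral Chern class, and the identity $c_1(N/\ZZ_k)=k\,c_1(N)$ reconciles the global normalization by $l=\lcm(m(x)\mid x\in X)$ with the normalization by $\mu$ on $X-P$ --- which is exactly the point requiring care, and you handle it.
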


 Recall that an element $a$ in a free abelian group $A$ is called {\it primitive} if it cannot be represented 
 as $a=kb$ with non-trivial $b\in A$, $k\in\mathbb{N}$, $k>1$. 
\begin{theorem}[\cite{M}]\label{thm:H1(M)} Suppose that $\pi: M\rightarrow X$ is a quasi-regular Seifert bundle over a cyclic orbifold $X$ with isotropy surfaces $D_i$ and set of singular points $P$. Let $\mu=\operatorname{lcm}(m_i)$. Then $H_1(M,\ZZ)=0$ if and only if 
\begin{enumerate}
\item $H_1(X,\ZZ)=0,$
\item $H^2(X,\ZZ)\rightarrow \oplus H^2(D_i,\ZZ_{m_i})$ is onto,
\item $c_1(M/\mu)\in H^2(X-P,\ZZ)$ is primitive.
\end{enumerate}
Moreover, $H_2(M,\ZZ)=\ZZ^k\oplus(\, \mathop{\oplus}\limits_i\ZZ_{m_i}^{2g_i})$, $g_i=\text{genus of}\,\,D_i$, $k+1=b_2(X).$
\end{theorem}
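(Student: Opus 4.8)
The plan is to compute $H_1(M,\ZZ)$ and $H_2(M,\ZZ)$ directly from the Seifert structure $\pi\colon M\to X$, treating $\pi$ as an orbifold circle bundle and decomposing $X$ into small balls $B=\bigsqcup_jB_j$ around the isotropy points $p_j\in P$ and the complement $X^o=X-B$.

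First I would treat $H_1$ through the fundamental group. Since $M$ carries a fibrewise $S^1$-action, the generic fibre $F\cong S^1$ represents a \emph{central} element $h\in\pi_1(M)$, and the homotopy exact sequence for the orbifold fibration $S^1\to M\to X$ gives $\pi_1(M)/\langle h\rangle\cong\pi_1^{\mathrm{orb}}(X)$. As a cyclic $4$-orbifold has isotropy points of real codimension $4$, removing $P$ does not affect $\pi_1$, so $\pi_1^{\mathrm{orb}}(X)=\pi_1(X-\bigcup D_i)/\langle\langle\gamma_i^{m_i}\rangle\rangle$ with $\gamma_i$ a meridian of $D_i$. Abelianising and inserting the exact sequence of the pair $(X,X-\bigcup D_i)$, together with $H_2(X,X-\bigcup D_i)\cong\bigoplus_i\ZZ$ (freely generated by the meridians) and $H_1(X,X-\bigcup D_i)=0$, gives a short exact sequence
$$0\to\coker\Big(H_2(X,\ZZ)\to\bigoplus_i\ZZ_{m_i}\Big)\to\pi_1^{\mathrm{orb}}(X)^{\mathrm{ab}}\to H_1(X,\ZZ)\to 0,$$
where the first map sends $Z$ to the tuple of intersection numbers $Z\cdot D_i$ reduced mod $m_i$. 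By the identification of $[D_i]$ with a class in $H^2(X,\QQ)$ lying in $H^2(X-P,\ZZ)$ recalled above, this map is dual to the restriction–reduction map $H^2(X,\ZZ)\to\bigoplus_iH^2(D_i,\ZZ_{m_i})$, so $\pi_1^{\mathrm{orb}}(X)^{\mathrm{ab}}=0$ is precisely conditions (1) and (2) together. Feeding this back into the central extension, the exact sequence $\ZZ\,h\to H_1(M,\ZZ)\to\pi_1^{\mathrm{orb}}(X)^{\mathrm{ab}}\to0$ shows that, granting (1) and (2), $H_1(M,\ZZ)$ is cyclic, generated by $h$.

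The second step is to determine the order of $h$, and this is the delicate point. Away from $\bigcup D_i$ and $P$ the $S^1$-action is free, so $M$ restricts there to an honest circle bundle; assembling this with the local solid-torus models along the $D_i$ (which make $\gamma_i^{m_i}$ equal to a power of $h$ determined by the local invariant of $D_i$) and with the homotopy-circles $\pi^{-1}(B_j)\simeq S^1$ over the balls at $P$ (over which the generic fibre is $m(p_j)$ times the local central fibre), one finds that the order of $h$ in $H_1(M,\ZZ)$ equals the divisibility of the integral class $c_1(M/\mu)\in H^2(X-P,\ZZ)$. Thus, granting (1) and (2), $H_1(M,\ZZ)=0$ if and only if $c_1(M/\mu)$ is primitive, which is condition (3); and if any one of (1), (2), (3) fails, one of the groups just described is non-trivial, so $H_1(M,\ZZ)\ne0$. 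In particular (3) forces $c_1(M)\ne0$ in $H^2(X,\QQ)$. The care needed here is that all the Seifert data at the surfaces and at the singular points must be combined correctly, and that the relevant lcm is $\mu=\operatorname{lcm}(m_i)$ rather than the lcm of all stabiliser orders.

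Finally, assume $H_1(M,\ZZ)=0$. Rationally, since the $\ZZ_l$-action defining $M\to M/l$ is through $S^1$ and hence acts trivially on $H^*(\,\cdot\,;\QQ)$, the transfer gives $H^*(M;\QQ)\cong H^*(M/l;\QQ)$, and the Gysin sequence of the circle bundle $M/l\to X$ with $c_1(M)\ne0$ and $H_1(X,\ZZ)=0$ yields $b_2(M)=b_2(X)-1=k$. For the torsion I would run Mayer–Vietoris over $M=\pi^{-1}(X^o)\cup\bigl(\bigsqcup_j\pi^{-1}(B_j)\bigr)$, and inside $\pi^{-1}(X^o)$ a further splitting into the honest $S^1$-bundle over $X^o-\bigcup D_i$ and tubular pieces $\pi^{-1}(\nu D_i)$, each a solid-torus bundle over $D_i$; gluing such a piece back along a $T^2$-bundle over $D_i$ imposes the meridian relation with coefficient $m_i$, contributing $H_1(D_i,\ZZ)\otimes\ZZ_{m_i}=\ZZ_{m_i}^{2g_i}$, whereas the pieces $\pi^{-1}(B_j)$ carry only free homology once the fibre torsion has been killed by $H_1(M,\ZZ)=0$. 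Here one must also check that the cyclic singularities $p\in P$ — including those on an intersection $D_i\cap D_j$, where a surface passes through a chart $\CC^2/\ZZ_d$ — bring in no torsion beyond what (1)–(3) already record; this follows from the local normal forms for cyclic quotient singularities and the compatibility of local invariants recalled above. Collecting everything gives $H_2(M,\ZZ)=\ZZ^k\oplus\bigoplus_i\ZZ_{m_i}^{2g_i}$.
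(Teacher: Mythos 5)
First, a point of comparison: the paper does not prove this statement at all --- it is imported verbatim from \cite{M} (and is close to Koll\'ar's computation of $H_1$ of a Seifert bundle in \cite{K1}), so the only thing to measure your proposal against is the proof in that reference. Your architecture is the standard one and matches it: the central extension $\ZZ\langle h\rangle\to\pi_1(M)\to\pi_1^{\orb}(X)\to 1$, the identification of $\pi_1^{\orb}(X)^{\mathrm{ab}}$ via the pair $(X,X-\bigcup D_i)$ with the cokernel of $Z\mapsto (Z\cdot D_i \bmod m_i)$, the duality of that map with $H^2(X,\ZZ)\to\oplus H^2(D_i,\ZZ_{m_i})$, and the Gysin/Mayer--Vietoris computation of $H_2$. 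So conditions (1) and (2) being equivalent to $\pi_1^{\orb}(X)^{\mathrm{ab}}=0$, and $H_1(M,\ZZ)$ then being cyclic on $h$, are correctly and essentially completely argued.

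The genuine gap is that the quantitative heart of the theorem --- that, granting (1) and (2), the order of $h$ in $H_1(M,\ZZ)$ equals the divisibility of $c_1(M/\mu)\in H^2(X-P,\ZZ)$ --- is asserted (``one finds that'') rather than derived. This is exactly where all the work of \cite{M} lies. Concretely: the relations on $h$ come from three sources that must be combined, namely the Gysin relation $\langle c_1,Z\rangle\,h=0$ over the free part of $X-\bigcup D_i-P$, the special fibres $h_i$ over $D_i$ satisfying $m_ih_i=h$ together with $\gamma_i^{m_i}=h^{s_i}$ with $s_i$ determined by the local invariant $(m_i,j_i)$, and the lens-space boundaries $\pi^{-1}(L_j)$ over the singular points, whose $H_1$ contains $d_j$-torsion that could a priori feed into the order of $h$. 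Showing that after passing to $M/\mu$ (and not $M/l$ with $l=\lcm(m(x))$, which is the naive choice) these contributions assemble exactly into the divisibility of the integral class $c_1(M/\mu)$ on $X-P$ is a nontrivial bookkeeping argument that your proposal does not carry out; without it neither direction of the equivalence with (3) is established. The same deferral occurs in the $H_2$ computation, where the claim that the pieces $\pi^{-1}(B_j)$ and the gluing tori over $P$ contribute no torsion beyond $\oplus_i\ZZ_{m_i}^{2g_i}$ is flagged but not checked. So the proposal is a correct skeleton of the intended proof, but the two steps it leaves as assertions are precisely the ones that constitute the theorem.
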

Thus, if one wants to check the assumptions of Theorem \ref{thm:H1(M)}, 
one in particular calculates $H^2(X-P,\ZZ)$ and checks the primitivity of $c_1(M/\mu)$ 
in $H^2(X-P,\ZZ)$. The first orbifold Chern class of the Seifert bundle can be calculated using the formula in Proposition \ref{prop:c1}.

\medskip

Let us explain the notion of a definite Sasakian structure in more detail. Recall that a Sasakian structure is positive (negative), if $c_1(\mathcal{F}_{\xi})$ can be represented by a positive (negative) definite $(1,1)$-form. 
The first Chern class $c_1(\mathcal{F}_{\xi})$ is an element of the basic cohomology of the foliated manifold $(M,\mathcal{F}_{\xi})$. A differential form $\alpha$ on $M$ is called {\it basic} if $i_{\xi}\alpha=0$ and $i_{\xi}d\alpha=0$. Consider the spaces of basic $k$-forms $\Omega^k_B(M)$. Clearly, the de Rham differential takes basic forms to basic forms, the restriction of the differential $d$ onto $\Omega^*_B(M)$ yields a differential complex $(\Omega^*_B,d_B)$ and a basic cohomology $H^*_B(\Omega_B^*,d_B)=H_B^*(M)$. In the Sasakian case, the transverse geometry is K\"ahler, and the basic cohomology inherits the bigrading $H^{p,q}_B(M)$.
 If $\pi: M\rightarrow X$ is a Seifert bundle determined by a quasi-regular Sasakian structure, then by \cite[Proposition 7.5.23]{BG},  $c_1(\mathcal{F}_{\xi})=\pi^*c_1^{\orb}(X)$. Since $c_1(\mathcal{F}_{\xi})$ is represented by a basic form (see \cite[Section 7.5.2]{BG}), it follows that $c_1(\mathcal{F}_{\xi})$ is represented by a  negative definite $(1,1)$-form if and only if $c_1^{\orb}(X)<0$ (that is, represented by a negative definite orbifold $(1,1)$-form). Thus, we get the following (see also \cite[Section 4]{BGM}).
 
\begin{proposition}\label{prop:c1neg} A $5$-manifold $M$ admits a quasi-regular negative Sasakian structure if and only if the base $X$ of the corresponding Seifert bundle $M\rightarrow X$ has the property that the canonical class  $K_X^{\orb}$ is ample.
\end{proposition}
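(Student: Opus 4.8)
The plan is to read off the statement from the dictionary, recalled above, between quasi-regular Sasakian structures on $M$ and Seifert bundles over compact cyclic K\"ahler orbifolds, so that everything is reduced to a positivity statement about the orbifold first Chern class of the base, which is then handled by the orbifold (Baily) version of the Kodaira embedding theorem. No computation is needed; the work is entirely in assembling the right facts with precise references.

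First I would treat the forward direction. If $M$ carries a quasi-regular negative Sasakian structure $(\eta,\xi,\phi,g)$, then by Theorem~\ref{thm:sas-orb} the leaf space of $\mathcal{F}_{\xi}$ is a compact cyclic K\"ahler orbifold $X$ and $\pi\colon M\to X$ is the associated Seifert bundle, with $c_1(M)=[\omega]$ for the orbifold K\"ahler form $\omega$. By \cite[Proposition 7.5.23]{BG} one has $c_1(\mathcal{F}_{\xi})=\pi^*c_1^{\orb}(X)$, and $c_1(\mathcal{F}_{\xi})\in H^{1,1}_B(M)$ is represented by a basic $(1,1)$-form. Since the transverse K\"ahler geometry of $(M,\mathcal{F}_{\xi})$ is precisely the orbifold K\"ahler geometry of $X$, pull-back along $\pi$ identifies orbifold $(p,q)$-forms on $X$ with basic $(p,q)$-forms on $M$, compatibly with positivity; under this identification a representative of $c_1(\mathcal{F}_{\xi})$ is negative definite exactly when the corresponding representative of $c_1^{\orb}(X)$ is. Hence the Sasakian structure is negative if and only if $c_1^{\orb}(X)<0$.

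Next I would translate $c_1^{\orb}(X)<0$ into ampleness of $K_X^{\orb}$. As $c_1^{\orb}(K_X^{\orb})=-c_1^{\orb}(X)$, the condition $c_1^{\orb}(X)<0$ says exactly that the orbifold line bundle $K_X^{\orb}$ admits an orbifold Hermitian metric of positive curvature. By the orbifold version of the Kodaira embedding theorem (Baily's theorem; see \cite{BG}), an orbifold line bundle on a compact cyclic K\"ahler orbifold is positive in this sense if and only if it is ample, which yields one implication. For the converse, suppose $\pi\colon M\to X$ is the Seifert bundle of a quasi-regular Sasakian structure (equivalently, $c_1(M)$ is an orbifold K\"ahler class, as in Proposition~\ref{prop:c1}) and that $K_X^{\orb}$ is ample. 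Then $-c_1^{\orb}(X)=c_1^{\orb}(K_X^{\orb})$ is represented by a positive orbifold $(1,1)$-form, so $c_1^{\orb}(X)<0$, and by the computation $c_1(\mathcal{F}_{\xi})=\pi^*c_1^{\orb}(X)$ together with Theorem~\ref{thm:sas-orb} the structure is negative; if one starts only from $M\to X$ with $K_X^{\orb}$ ample, one invokes the converse half of Theorem~\ref{thm:sas-orb} with an orbifold K\"ahler form $\omega$ representing a positive multiple of $-c_1^{\orb}(X)$.

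The only genuinely delicate point, and the step I would write out most carefully, is the identification of the basic de Rham and Dolbeault complexes of $(M,\mathcal{F}_{\xi})$ with the orbifold complexes of $X$, ensuring that "negative definite basic $(1,1)$-form on $M$" matches "negative definite orbifold $(1,1)$-form on $X$"; once this is in place, the equivalence is immediate from $c_1(\mathcal{F}_{\xi})=\pi^*c_1^{\orb}(X)$ and Baily's theorem. I expect this to be routine but worth spelling out, since the whole proposition rests on it; the rest amounts to quoting Theorem~\ref{thm:sas-orb} and \cite[Section 7.5.2]{BG}.
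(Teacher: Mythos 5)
Your proposal is correct and follows essentially the same route as the paper: the paper's justification (given in the discussion immediately preceding the proposition) likewise combines Theorem~\ref{thm:sas-orb} with the identity $c_1(\mathcal{F}_{\xi})=\pi^*c_1^{\orb}(X)$ from \cite[Proposition 7.5.23]{BG} and the identification of basic $(1,1)$-forms with orbifold $(1,1)$-forms to reduce negativity to $c_1^{\orb}(X)<0$, i.e.\ ampleness of $K_X^{\orb}$. The only difference is that you make explicit the appeal to the orbifold Kodaira embedding theorem, which the paper leaves implicit.
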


In the case of rational homology spheres Sasakian structures fall into two classes.  
\begin{definition} A Sasakian structure is called {\it canonical (anti-canonical)}, if $c_1^{\orb}(X)$ is a positive (negative) multiple of $[d\eta]_B$. 
\end{definition}

Clearly, an anti-canonical Sasakian structure is negative.
 Note that in general there are positive or negative Sasakian structures on $M$  which are not canonical or anti-canonical. If $b_2(M)>0$, then the condition of being canonical or anti-canonical chooses a ray in the K\"ahler cone ${\mathcal K}(X)$. 
 Thus, there are many positive (negative) Sasakian structures determined by Seifert bundles $M\rightarrow X$ whose first Chern class $c_1(M)$ is not proportional to $c_1(\mathcal{F}_{\xi})$. However, if $M$ is a rational homology sphere, then any Sasakian structure is either canonical or anti-canonical.

\begin{proposition}[{\cite[Proposition 7.5.29]{BG}}]\label{prop:can-sphere} Let $M$ be a $(2n+1)$-dimensional rational homology sphere. Any Sasakian structure on $M$ satisfies $c_1(\mathcal{F}_{\xi})=a[d\eta]_B$ for some non-zero constant $a$. Hence, any Sasakian structure on $M$ is either positive or negative.
\end{proposition}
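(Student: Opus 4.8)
The plan is to first determine the degree-two basic cohomology $H^2_B(M)$ of the characteristic foliation $\mathcal{F}_\xi$, and then locate $c_1(\mathcal{F}_\xi)$ inside it. The tool I would use is the Gysin-type long exact sequence of $\mathcal{F}_\xi$. Since $M$ is compact, the closure of the Reeb flow is a torus, and averaging over it shows that $\xi$-invariant forms compute $H^*(M;\RR)$. The short exact sequence of complexes
\[
0\longrightarrow\Omega^*_B(M)\longrightarrow\Omega^*_{\mathrm{inv}}(M)\xrightarrow{\ \iota_\xi\ }\Omega^{*-1}_B(M)\longrightarrow 0
\]
then yields a long exact sequence whose connecting homomorphism is $\wedge[d\eta]_B$ (because $d(\eta\wedge\gamma)=d\eta\wedge\gamma$ for a closed basic form $\gamma$), namely
\[
\cdots\longrightarrow H^{k}(M;\RR)\longrightarrow H^{k-1}_B(M)\xrightarrow{\ \wedge[d\eta]_B\ }H^{k+1}_B(M)\longrightarrow H^{k+1}(M;\RR)\longrightarrow\cdots .
\]

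Next I would feed in the hypothesis: $M$ is a rational homology $(2n+1)$-sphere, so $H^j(M;\RR)=0$ for $0<j<2n+1$, while basic $j$-forms vanish identically for $j>2n$. Reading the exact sequence from the bottom gives $H^1_B(M)=0$ and, by induction in the appropriate range, isomorphisms $H^{2k-2}_B(M)\xrightarrow{\ \wedge[d\eta]_B\ }H^{2k}_B(M)$ and $H^{2k-1}_B(M)\xrightarrow{\ \sim\ }H^{2k+1}_B(M)$. Hence $H^{\mathrm{odd}}_B(M)=0$, while $H^{2k}_B(M)=\RR\,[d\eta]_B^{\,k}$ for $0\le k\le n$ (the top power being nonzero because $\int_M\eta\wedge(d\eta)^n\neq 0$). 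In particular $H^2_B(M)=\RR\,[d\eta]_B$. Since $c_1(\mathcal{F}_\xi)$ lies in $H^2_B(M;\RR)$, being represented by a basic $(1,1)$-form (the transverse Ricci form), we get $c_1(\mathcal{F}_\xi)=a\,[d\eta]_B$ for a unique $a\in\RR$.

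The part I expect to be the real obstacle is showing $a\neq 0$, i.e.\ excluding a null structure. Here the soft foliation argument does not suffice, and I would invoke the structure theory of the transverse space. Passing (by Rukimbira's theorem) to a quasi-regular Reeb field and applying Theorem~\ref{thm:sas-orb}, one gets a Seifert bundle $M\to X$ over a K\"ahler orbifold surface with $\dim_\RR H^2_B(M)=b_2(X)=1$ (the last equality by Theorem~\ref{thm:H1(M)}, since $M$ is a rational homology sphere), and $a=0$ would force $c_1^{\orb}(X)=0$. But a compact K\"ahler orbifold surface with $c_1^{\orb}(X)=0$ and $H_1(X;\ZZ)=0$ has Kodaira dimension $0$, and the Enriques--Kodaira classification (the minimal resolution is of K3- or Enriques-type, and only boundedly many $(-2)$-curve configurations can be contracted) forces $b_2(X)\ge 2$, a contradiction. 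Hence $a\neq 0$. (Alternatively one cites directly that rational homology spheres carry no null Sasakian structures.)

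Finally, I would read off the conclusion: the transverse K\"ahler form of the Sasakian structure is $\tfrac12 d\eta$, a positive basic $(1,1)$-form, so $c_1(\mathcal{F}_\xi)=a\,[d\eta]_B$ is represented by $2a$ times a positive definite $(1,1)$-form. By the definition recalled in the introduction, the structure is positive when $a>0$ and negative when $a<0$; since $a\neq 0$, it is one or the other.
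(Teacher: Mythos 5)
The paper does not actually prove this proposition: it is quoted verbatim from \cite[Proposition 7.5.29]{BG}, so there is no in-paper argument to compare yours against. Your first half is correct and is precisely the standard (Boyer--Galicki) argument: the short exact sequence $0\to\Omega^*_B(M)\to\Omega^*_{\mathrm{inv}}(M)\to\Omega^{*-1}_B(M)\to 0$, the identification of the connecting homomorphism with $\wedge[d\eta]_B$, and the resulting isomorphisms forced by $H^1(M;\RR)=H^2(M;\RR)=0$ give $H^2_B(M)=\RR\,[d\eta]_B$, hence $c_1(\mathcal{F}_{\xi})=a[d\eta]_B$. The final paragraph (positive/negative according to the sign of $a$) is also fine.

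The genuine gap is in the exclusion of $a=0$, which you rightly flag as the crux but do not establish. First, your argument only treats $\dim M=5$, while the statement is for all $2n+1$. Second, and more seriously, the transverse space is a K\"ahler orbifold \emph{pair}: $c_1^{\orb}(X)=-(K_X+\Delta)$ with $\Delta$ the branch divisor, so $a=0$ gives a log Calabi--Yau pair, not a surface with numerically trivial canonical class, and the Enriques--Kodaira classification you invoke is not the relevant tool; the claim that $b_2(X)\geq 2$ is false at this level of generality. Concretely, $X=\CP^2$ with $\Delta=\frac34 D$ for a smooth quartic $D$ has $b_2(X)=1$, $H_1(X,\ZZ)=0$ and $K_X^{\orb}=0$, and a Seifert bundle over it with $c_1(M)=[L]$ is a rational homology $5$-sphere (with nontrivial finite $H_1$, since $\pi_1^{\orb}(X)=\ZZ_4$) carrying a null Sasakian structure. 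This shows that any correct proof of $a\neq 0$ must use the standing convention $H_1(M,\ZZ)=0$ (as in Theorem \ref{thm:Ko}), which your argument never touches. The usual route is Hodge-theoretic rather than classificatory: if $a=0$, the transverse Calabi--Yau theorem makes the transverse canonical bundle flat; $H_1(M,\ZZ)=0$ excludes torsion, so it is trivial, whence $h^{2,0}_B=h^{0,2}_B\geq 1$ and $\dim H^2_B\geq 3$, contradicting $H^2_B=\RR[d\eta]_B$ — and this works in all dimensions. Even in the no-branch-divisor surface case your parenthetical about contracting $(-2)$-curve configurations would need the bound $\rho\leq 20$ for projective K3 surfaces and a separate exclusion of log Enriques surfaces. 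Your fallback of citing the non-existence of null structures on rational homology spheres is what the literature does (\cite[Theorem 10.3.14]{BG}), but as a proof it concedes exactly the step at issue.
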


Finally, recall that we are interested on Smale-Barden manifolds obtained as particular Seifert bundles $M\rightarrow X$. Therefore, we need to check that $\pi_1(M)=1$. We will systematically use the following. 
\begin{definition} The orbifold fundamental group $\pi_1^{\orb}(X)$ is defined as 
   $$
   \pi_1^{\orb}(X)=\pi_1(X-(\Delta\cup P))/\langle \gamma_i^{m_i}=1\rangle,
   $$
where $\langle\gamma_i^{m_i}=1\rangle$ denotes the following relation on $\pi_1(X-(\Delta\cup P))$: for any small loop $\gamma_i$ around a surface $D_i$ in the branch divisor, one has $\gamma_i^{m_i}=1$.
\end{definition}

We will use without further notice the following exact sequence
  $$
  \cdots\rightarrow \pi_1(S^1)=\ZZ\rightarrow\pi_1(M)\rightarrow\pi_1^{\orb}(X)\rightarrow 1.
  $$
It can be found in \cite[Theorem 4.3.18]{BG}. It is easy to see that if $H_1(M,\ZZ)=0$ and $\pi_1^{\orb}(X)$ is abelian, then  
$\pi_1(M)$ must be trivial.  This holds since if $H_1(M,\ZZ)=0$, then $\pi_1(M)$ has no abelian quotients. As $\pi_1^{\orb}(X)$ is assumed abelian,
we find that $\pi_1(M)$ is a quotient of $\ZZ$, hence again abelian. This implies that $\pi_1(M)=0$.

The following result will be used in the calculations of the orbifold fundamental groups.
\begin{proposition}[\cite{N}]\label{prop:nori}  If $Z$ is a smooth simply-connected projective surface with smooth complex curves $C_i$ intersecting transversally and satisfying $C_i^2>0$, then $\pi_1(Z-(C_1\cup \ldots\cup C_r))$
is abelian.
\end{proposition}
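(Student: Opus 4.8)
The statement above is Nori's theorem, and the plan is to follow Nori's strategy: reduce the whole assertion to the centrality of the meridians, and then exploit the positivity $C_i^2>0$ to force that centrality. Set $U=Z\setminus(C_1\cup\cdots\cup C_r)$. By the standard description of the fundamental group of a divisor complement (a Zariski--van Kampen argument), $\pi_1(U)\to\pi_1(Z)$ is surjective with kernel the normal closure of the meridians $\gamma_1,\dots,\gamma_r$ of the $C_i$; since $Z$ is simply connected, this means $\pi_1(U)$ \emph{is} the normal closure of $\{\gamma_1,\dots,\gamma_r\}$. Hence it suffices to show that each $\gamma_i$ is central in $\pi_1(U)$: then the subgroup generated by the $\gamma_i$ is central, in particular normal, so it equals its own normal closure, which is all of $\pi_1(U)$, and a group generated by finitely many central elements is a quotient of $\ZZ^r$, hence abelian.

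To prove centrality of a fixed $\gamma_i$, I would use $C_i^2>0$ to place $C_i$ inside a pencil. By Riemann--Roch on $Z$, $h^0(Z,\mathcal O_Z(nC_i))$ grows quadratically in $n$: the leading term of $\chi(\mathcal O_Z(nC_i))$ is $\tfrac12 n^2 C_i^2>0$, while $h^2(\mathcal O_Z(nC_i))=h^0(\mathcal O_Z(K_Z-nC_i))$ vanishes for $n\gg 0$ because $(K_Z-nC_i)\cdot C_i<0$ eventually. So $|nC_i|$ has dimension $\geq 2$ for $n$ large; taking a generic pencil through the member $nC_i$ and applying Bertini, one obtains a pencil whose base locus lies on $C_i$ and whose general member is a smooth curve meeting every $C_j$, $j\neq i$, transversally. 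Blowing up the base points gives $\rho\colon\widehat Z\to Z$ together with a morphism $f\colon\widehat Z\to\PP^1$. Since $\rho$ is a composition of point blow-ups, $\pi_1(\widehat Z)=\pi_1(Z)=1$; and since all base points lie on $C_i$, one has $\widehat Z\setminus\rho^{-1}(\bigcup_j C_j)=U$. Under this identification the strict transform of $C_i$ is (the support of) a fibre of $f$, the meridian $\gamma_i$ is a small loop around that fibre, and restricting $f$ to $U$ exhibits $U$, away from the finitely many critical fibres, as a fibration over a Zariski-open subset of $\PP^1$ whose fibre is a general member of the pencil with finitely many points deleted.

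The technical heart of the argument — and the step I expect to be the main obstacle — is then to combine the homotopy exact sequence of this fibration with the Zariski--van Kampen monodromy relations carefully enough to conclude that $\gamma_i$ is central in $\pi_1(U)$: it commutes with the image of $\pi_1$ of a fibre, via the local product structure of $f$ near a general point of $C_i$ (this is exactly where the pencil, hence $C_i^2>0$, is used, and where the multiplicity of the fibre over $C_i$ must be tracked), and it commutes with the remaining generators through the monodromy relations of the family. Making this precise requires bookkeeping of the singular and multiple fibres of $f$, of the exceptional curves of $\rho$, and of the transversality of the $C_j$ with the pencil — this last point being where the transversal-intersection hypothesis is used to keep the pencil generic — but this is precisely Nori's contribution. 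Once $\gamma_i$ is shown to be central for every $i$, the reduction of the first step shows $\pi_1(U)$ is abelian.
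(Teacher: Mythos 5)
The paper does not prove this proposition at all: it is quoted from Nori's paper \cite{N} and used as a black box, so there is no internal argument to compare yours against. Judged on its own, your proposal contains one correct step, one plausible step, and one genuine gap. The correct step is the reduction: since $\pi_1(Z)=1$, the group $\pi_1(U)$ is the normal closure of the meridians $\gamma_1,\dots,\gamma_r$, and if each $\gamma_i$ is central then the normal closure equals the subgroup they generate, which is then abelian; this is indeed the frame of Nori's argument. The plausible step is the use of $C_i^2>0$ through Riemann--Roch to put $nC_i$ into a pencil (note that $h^0(K_Z-nC_i)=0$ for large $n$ because an irreducible curve with positive self-intersection meets every effective divisor non-negatively --- a justification your sketch leaves implicit). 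The gap is the entire content of the theorem: the centrality of $\gamma_i$ in $\pi_1(U)$. You describe the shape of the argument (local product structure of the pencil near a general point of $C_i$, Zariski--van Kampen monodromy relations, bookkeeping of multiple fibres, exceptional curves, and the intersections with the other $C_j$) and then explicitly defer it to ``Nori's contribution''. Nothing in the proposal actually establishes that $\gamma_i$ commutes with a generating set of $\pi_1(U)$; the easy part (commuting with the image of $\pi_1$ of a nearby smooth fibre) is far from sufficient, and the hard part --- that these fibre classes together with the monodromy relations control all of $\pi_1(U)$ --- is exactly where the positivity is spent and is not addressed.

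So the proposal is an accurate road map to \cite{N}, but it is not a proof; in effect it reproduces the paper's citation with commentary. If the intention is a self-contained argument, the missing piece is a precise statement and proof of the centrality lemma for the meridian of a curve of positive self-intersection in the presence of the residual divisor $\bigcup_{j\neq i}C_j$. If the intention is only to locate the result in the literature, one should instead identify the exact statement in \cite{N} that yields this proposition: Nori's weak Lefschetz theorem is phrased with hypotheses on each component $C$ involving the number of distinguished points lying on $C$, and matching those hypotheses to the clean condition $C_i^2>0$ used here, with several curves intersecting transversally, is itself a step that needs to be written down rather than assumed.
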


\section{Rational homology spheres with positive and negative Sasakian structures}

The aim of this section is to prove the following result.

\begin{theorem}\label{thm:neg-pos-main} 
Any simply connected rational homology sphere admitting a positive Sasakian structure  admits also a negative Sasakian structure.
\end{theorem}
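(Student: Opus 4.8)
The plan is to exploit the classification given in Theorem~\ref{thm:pos-sas}: a simply connected rational homology sphere $M$ admitting a positive Sasakian structure has $H_2(M,\ZZ)$ equal to one of finitely many torsion groups, namely $0$, $\ZZ_m^2$, $\ZZ_5^4$, $\ZZ_4^4$, $\ZZ_3^4$, $\ZZ_3^6$, $\ZZ_3^8$, $\ZZ_2^{2n}$. By Theorem~\ref{thm:gomez} (G\'omez), all of these already admit a negative Sasakian structure \emph{except} possibly $\ZZ_m^2$ with $m<5$ (i.e.\ $\ZZ_2^2$, $\ZZ_3^2$, $\ZZ_4^2$) and $\ZZ_2^{2n}$ for $n>0$. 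So it suffices to construct, for each of these remaining torsion types, a simply connected rational homology sphere realizing it and carrying a negative quasi-regular Sasakian structure. By Proposition~\ref{prop:c1neg} this amounts to producing a cyclic K\"ahler $4$-orbifold $X$ with $K_X^{\orb}$ ample, together with a Seifert bundle $M\to X$ with $H_1(M,\ZZ)=0$ and the prescribed $H_2(M,\ZZ)_{\mathrm{tors}}$, and with $\pi_1^{\orb}(X)$ abelian (so that $\pi_1(M)=1$).

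The construction follows the standard recipe used throughout the paper. For the $\ZZ_2^{2n}$ case one wants a branch divisor consisting of a single isotropy curve $D$ of genus $g=n$ and multiplicity $m=2$ inside a base orbifold $X$; Theorem~\ref{thm:H1(M)} then gives $H_2(M,\ZZ)_{\mathrm{tors}}=\ZZ_2^{2g}$. For the $\ZZ_m^2$ cases ($m=2,3,4$) one wants a single isotropy curve of genus $1$ and multiplicity $m$. In each case I would take $X$ to be (a cyclic-orbifold modification of) a rational surface such as $\CP^2$, $\PP^1\times\PP^1$, or a Hirzebruch surface, choose the curve $D$ in a high-degree linear system (so that $D^2>0$, which by Proposition~\ref{prop:nori} forces $\pi_1(X\setminus D)$ abelian, hence $\pi_1^{\orb}(X)$ abelian), and then blow up and blow down along chains of rational curves (Propositions~\ref{prop:Hirz-Jung} and \ref{prop:constr-orbi}) to introduce cyclic quotient singularities adjusted so that $K_X^{\orb}$ becomes ample. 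Concretely, $K_X^{\orb}=K_X+\sum(1-\tfrac1{m_i})D_i$, and since the ambient rational surface has $-K_X$ effective one must choose $D$ of large enough degree (and possibly add auxiliary contracted chains contributing positively to $K_X^{\orb}$) to make the orbifold canonical class ample; the self-intersection formula \eqref{eqn:barD2} controls how contractions change $D^2$ and hence the positivity. Finally one fixes a local invariant $j$ with $\gcd(j,m)=1$ for $D$ (and compatible local invariants at the singular points via Proposition~\ref{prop:25-M}), picks the line bundle $B$ in Proposition~\ref{prop:c1} so that $c_1(M)$ lies in the K\"ahler cone and, crucially, so that $c_1(M/\mu)$ is \emph{primitive} in $H^2(X\setminus P,\ZZ)$ and the restriction map $H^2(X,\ZZ)\to H^2(D,\ZZ_m)$ is onto — these two conditions, together with $H_1(X,\ZZ)=0$, give $H_1(M,\ZZ)=0$ by Theorem~\ref{thm:H1(M)}.

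The main obstacle I expect is the simultaneous satisfaction of three pulling-in-different-directions constraints: (i) ampleness of $K_X^{\orb}$ forces $D$ to have large degree / forces the quotient singularities to be "heavy" enough; (ii) the primitivity of $c_1(M/\mu)$ in $H^2(X\setminus P,\ZZ)$ is a delicate arithmetic condition — one must compute $H^2(X\setminus P,\ZZ)$ via the Mayer--Vietoris sequence recalled before Proposition~\ref{prop:c1} and check a gcd condition on the coefficients of $c_1(M)$ — and; (iii) the surjectivity onto $H^2(D,\ZZ_m)$, i.e.\ coprimality of $m$ with the degree of $D$ in the appropriate sense (cf.\ condition (1c) of Theorem~\ref{thm:Ko}). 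Getting the small cases $\ZZ_2^2$, $\ZZ_3^2$, $\ZZ_4^2$ (genus-one curve, small multiplicity) to meet the ampleness bound is likely the tightest squeeze, and may require a careful choice of ambient surface and of the contracted chains; I would handle $\ZZ_2^{2n}$ first as a template and then adapt. Once a valid $(X,D,m,B)$ is exhibited in each case, Propositions~\ref{prop:c1}, \ref{prop:c1neg} and Theorem~\ref{thm:H1(M)} assemble into the desired negative Sasakian structure, and combined with Theorem~\ref{thm:gomez} this completes the proof.
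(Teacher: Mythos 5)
Your reduction is exactly the paper's: by Theorem \ref{thm:gomez} only the cases $H_2(M,\ZZ)=\ZZ_2^{2n}$ and $\ZZ_m^2$, $m<5$, remain, and each is to be realized by a Seifert bundle over a cyclic K\"ahler orbifold with $K_X^{\orb}$ ample, using Proposition \ref{prop:c1neg}, Theorem \ref{thm:H1(M)} and Proposition \ref{prop:nori}. The framework and the list of constraints (ampleness, primitivity of $c_1(M/\mu)$, surjectivity onto $H^2(D,\ZZ_{m})$, abelian $\pi_1^{\orb}$) are all correctly identified.

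However, there is a genuine gap at the one step that carries the content: your recipe for making $K_X^{\orb}$ ample does not work as stated. The target torsion group pins down the branch data --- a genus-$1$ curve of multiplicity $m$ for $\ZZ_m^2$, a genus-$n$ curve of multiplicity $2$ for $\ZZ_2^{2n}$ --- so you are \emph{not} free to ``choose $D$ in a high-degree linear system'': on any rational surface the genus formula ties the degree of $D$ to its genus, and the contribution $\bigl(1-\tfrac1m\bigr)D$ to $K_X^{\orb}=K_X+\bigl(1-\tfrac1m\bigr)D$ is strictly smaller than $D$, which is itself comparable to $-K_X$ (e.g.\ a genus-$1$ curve in $\CP^2$ is a cubic and $\bigl(1-\tfrac1m\bigr)\cdot 3<3$). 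Contracting auxiliary chains does not fix this either; even after the rather delicate triple blow-up/blow-down of $\CP^2$ used in the paper for the $\ZZ_m^2$ case one still has $K_{\bar X}^{\orb}=\bigl(-6+7(1-\tfrac1m)\bigr)[\bar E]<0$. The missing idea is to enlarge the branch divisor by arbitrarily many \emph{genus-zero} curves $D_i$ (sections of $\mathbb F_n$, resp.\ generic lines in $\CP^2$) with large, pairwise coprime multiplicities $m_i$: by the formula $H_2(M,\ZZ)_{\mathrm{tors}}=\oplus_i\ZZ_{m_i}^{2g_i}$ in Theorem \ref{thm:H1(M)} these contribute no torsion since $g_i=0$, while the terms $\sum_i\bigl(1-\tfrac1{m_i}\bigr)D_i$ make $K_X^{\orb}$ ample; the coprimality also feeds into solving the primitivity equation for $c_1(M/\mu)$. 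Without this device the ampleness constraint cannot be met in any of the remaining cases, so the proof as proposed does not close. (A secondary point: in the $\ZZ_2^{2n}$ construction one of the removed curves, $E_\infty$, has negative self-intersection, so Proposition \ref{prop:nori} does not apply directly and the abelianness of $\pi_1^{\orb}$ needs a separate argument.)
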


By Theorem \ref{thm:gomez}, it only remains to check that the homology groups
$H_2(M,{\mathbb Z})= {\mathbb Z}_2^{2n}$, $n>0$, and ${\mathbb Z}_m^2$, $m< 5$, 
from Theorem \ref{thm:pos-sas} can be covered.
We shall work out each of them separately.

We begin with some known facts on Hirzebruch surfaces \cite{BPV}, \cite{GS}, \cite{H}. 
Fix $n\geq 0$. By definition, the Hirzebruch surface $\mathbb{F}_n$  is the projectivization of the vector bundle
 $$
 \mathcal{O}_{\CP^1}(n)\oplus\mathcal{O}_{\CP^1}.
 $$ 
For a holomorphic section
 $\sigma: \CP^1\rightarrow\mathcal{O}_{\CP^1}(n)$, we
denote by $E_\sigma$ the image of $(\sigma,1): \CP^1\rightarrow \mathcal{O}_{\CP^1}(n)\oplus\mathcal{O}_{\CP^1}$
in $\mathbb{F}_n$. 
The curve $E_0\subset\mathbb{F}_n$ is called the zero section of $\mathbb{F}_n$. As $E_0\equiv E_\sigma$ for all sections $\sigma$,
we have that $E_0^2=n$. Let $C$ denote the fiber of the fibration $\mathbb{F}_n\rightarrow \CP^1$. Then
 $$
 C^2=0, E_0\cdot C=1.
 $$
It is known \cite{T} that for $n>0$ the surface $\mathbb{F}_n$ contains a unique irreducible curve $E_{\infty}$ of negative self-intersection:
 \begin{equation*}
 E_{\infty}\cdot E_{\infty}=-n.
 \end{equation*}
This curve is called the section at infinity since it is  given by the image of 
$(\sigma,0): \CP^1\rightarrow \mathcal{O}_{\CP^1}(n)\oplus\mathcal{O}_{\CP^1}$. One can calculate
 $$
  E_{\infty} \equiv E_0-nC, \quad K_{\mathbb{F}_n} \equiv (n-2)C-2E_0,
  $$
where $K_{\mathbb{F}_n}$ is the canonical divisor. Clearly
 $$
 H^2(\mathbb{F}_n,\ZZ)=\ZZ\langle C,E_{\infty}\rangle.
 $$
 
Now we are ready to prove our first main result.

\begin{proposition}\label{prop:1-neg-pos-main} 
The simply connected rational homology sphere with $H_2(M,\ZZ)=\ZZ_2^{2n}$, $n\geq 1$, and spin
admits a negative Sasakian structure.
\end{proposition}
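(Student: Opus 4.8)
The plan is to realise $M$ as the total space of a quasi-regular Seifert bundle $\pi\colon M\to X$ over a cyclic K\"ahler orbifold $X$ with $b_2(X)=1$ whose orbifold canonical class $K_X^{\orb}$ is ample: by Proposition~\ref{prop:c1neg} the resulting Sasakian structure is then negative, while by Theorem~\ref{thm:H1(M)} one gets $H_1(M,\ZZ)=0$ and $H_2(M,\ZZ)=\ZZ_2^{2g_1}\oplus\ZZ_{m_2}^{2g_2}$ as soon as $X$ carries an isotropy curve $D_1$ of multiplicity $2$ and genus $n$ together with an auxiliary rational ($g_2=0$) isotropy curve $D_2$ of odd multiplicity $m_2$; since $\ZZ_{m_2}^{0}=0$ this produces exactly the group $\ZZ_2^{2n}$. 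Note that $b_2(M)=0$ forces $b_2(X)=1$, so $X$ cannot be a minimal rational surface, and I will build it from a Hirzebruch surface by a single contraction.

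I would start from $\mathbb{F}_n$ and contract the section at infinity $E_\infty$ ($E_\infty^2=-n$) using Proposition~\ref{prop:Hirz-Jung}; since $\tfrac n1=[n]$, the result $\bar X$ is $\CP^2$ for $n=1$ and has a single cyclic quotient singularity $p$ of order $n$ for $n\ge 2$, always with $b_2(\bar X)=1$, $H_1(\bar X,\ZZ)=0$, $\operatorname{Weil}(\bar X)\cong\ZZ$ generated by the class $\bar C$ of the image of a ruling fibre, $\bar C^{2}=\tfrac1n$ and $K_{\bar X}=-(n+2)\bar C$. On $X=\bar X$ put $D_1=\pi(C')$ for a general smooth $C'\in|2E_0+C|$: it meets $E_\infty$ transversally in one point, so by Proposition~\ref{prop:constr-orbi} it is orbismooth, of genus equal to that of $C'$ on $\mathbb{F}_n$, namely $n$ (adjunction), with $[D_1]=(2n+1)\bar C$, hence $\gcd(m_1,\deg D_1)=\gcd(2,2n+1)=1$. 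For $D_2$ take a smooth rational curve disjoint from $p$: for $n\ge 2$ the image of a general section in $|E_0|$ (degree $n$), for $n=1$ a smooth conic; and pick $m_2$ to be an odd prime $\ge 5$ not dividing $n$, so that the coprimality conditions of Theorem~\ref{thm:Ko} and Proposition~\ref{prop:sympl-orbi} hold and $X$ is a cyclic K\"ahler orbifold with branch divisor $\tfrac12 D_1+(1-\tfrac1{m_2})D_2$. Computing degrees, $K_X^{\orb}=\bigl(n(1-\tfrac1{m_2})-\tfrac32\bigr)\bar C$ (with $n$ replaced by $2$ in the first term when $n=1$), and $m_2\ge 5$ makes this coefficient positive, so $K_X^{\orb}$ is ample.

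It remains to build the Seifert bundle. As the only singular point $p$ lies on $D_1$ alone, Proposition~\ref{prop:25-M} supplies compatible local invariants ($j_1=1$, and $j_2$ any unit mod $m_2$), and Proposition~\ref{prop:c1} then gives Seifert bundles $M\to X$ with $c_1(M)=c_1(B)+\tfrac{b_1}{2}[D_1]+\tfrac{b_2}{m_2}[D_2]$, forming a torsor under $H^2(X,\ZZ)$ in $B$. I would choose $B$ (and $j_2$) so that $c_1(M)$ is ample, hence a K\"ahler class --- possible since $b_2(X)=1$ --- whence $M$ carries a quasi-regular negative Sasakian structure by Propositions~\ref{prop:c1} and~\ref{prop:c1neg}. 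For $H_1(M,\ZZ)=0$ I check the three conditions of Theorem~\ref{thm:H1(M)}: $H_1(X,\ZZ)=0$ is clear; $H^2(X,\ZZ)\to H^2(D_1,\ZZ_2)\oplus H^2(D_2,\ZZ_{m_2})$ is onto because $\gcd(\deg D_i,m_i)=1$ and $\gcd(m_1,m_2)=1$; and primitivity of $c_1(M/\mu)\in H^2(X-P,\ZZ)$, $\mu=2m_2$, is arranged using the $H^2(X,\ZZ)$-freedom in $B$ and the freedom in $j_2$. Then Theorem~\ref{thm:H1(M)} gives $H_1(M,\ZZ)=0$ and $H_2(M,\ZZ)=\ZZ_2^{2n}\oplus\ZZ_{m_2}^{0}=\ZZ_2^{2n}$, and $\pi_1(M)=1$ follows from the exact sequence $\ZZ\to\pi_1(M)\to\pi_1^{\orb}(X)\to 1$ once $\pi_1^{\orb}(X)$ is seen to be abelian --- for $n=1$ by Proposition~\ref{prop:nori} applied to $\CP^2\setminus(D_1\cup D_2)$ (both curves of positive self-intersection), and for $n\ge 2$ by the same argument on the resolution $\mathbb{F}_n$ together with the orbifold relation at $p$, or directly from the Seifert presentation. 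Being negative Sasakian with $H_1(M,\ZZ)=0$, $M$ is spin, so by the Smale--Barden classification it is the manifold of the statement.

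The step I expect to be the crux is the simultaneous achievement of the primitivity of $c_1(M/\mu)$ in $H^2(X-P,\ZZ)\cong\ZZ$ and the ampleness of $c_1(M)$: changing $B$ shifts $c_1(M)$ by a degree-$1$ class and $c_1(M/\mu)$ by a degree-$\mu$ class, so the essentially unique primitive choice of $c_1(M/\mu)$ must be shown to lie in the ample range, which forces a careful match among the multiplicities $2,m_2$, the degrees $2n+1$ and $\deg D_2$, the local invariant $j_2$ and the bundle $B$; this arithmetic bookkeeping, rather than the geometry of $X$, is the real content. A secondary point is that the genus of $D_1$ must be read off its smooth model $C'\subset\mathbb{F}_n$ (not from orbifold adjunction on the singular $X$), and that $|2E_0+C|$ and $|E_0|$ do contain smooth irreducible members in the required position with respect to $E_\infty$.
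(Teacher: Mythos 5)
Your construction is essentially the paper's own proof: contract $E_\infty\subset\mathbb F_n$ to a Picard-rank-one cyclic orbifold, take a genus-$n$ branch curve in $|C+2E_0|$ with multiplicity $2$ together with auxiliary rational branch curves of coprime odd multiplicities to force $K_X^{\orb}>0$, and then apply Propositions \ref{prop:c1}, \ref{prop:25-M}, \ref{prop:c1neg}, Theorem \ref{thm:H1(M)} and Nori's theorem exactly as the authors do (they use several sections $D_i\equiv E_0$ where you use one auxiliary curve, a conic when $n=1$ --- a cosmetic difference). Your flagged ``crux'' about reconciling primitivity of $c_1(M/\mu)$ with ampleness is not a real obstacle: since $H^2(X-P,\ZZ)\cong\ZZ$ and the ample cone is a single ray, the primitive positive generator is automatically a K\"ahler class, which is precisely how the paper disposes of it.
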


\begin{proof} 
Consider $\mathbb{F}_n$ with the zero section $E_0$, the section at infinity $E_{\infty}$, 
and the fiber $C$ of $\mathbb{F}_n\rightarrow \CP^1$. Let $\beta\geq 1$ and  take a smooth divisor 
 $$
 D\equiv C+\beta E_0.
 $$ 
This exists because $ C+\beta E_0$ is  very ample (it can be represented by a K\"ahler form) and Bertini's theorem is applicable.
 By the genus formula $D^2+K_{\mathbb{F}_n} \cdot D =2g(D)-2$, we get
 \begin{equation}\label{eqn:(2)}
 g(D)={\frac{\beta^2-\beta}2}\cdot n. 
 \end{equation}
One also calculate
 $$
 D\cdot E_{\infty}=(C+\beta E_0)\cdot E_{\infty}= 1,
 $$
as $E_0\cdot E_\infty=0$. Consider also rational curves $D_i=E_{\sigma_i}$, $i=1,\ldots, s$, which are
sections, $D_i\equiv E_0$. These can be taken to intersect transversally (each other and also to $D$).

Let $X$ be the orbifold obtained by the blow-down of $\mathbb{F}_n$ along $E_{\infty}$. 
By Proposition \ref{prop:constr-orbi}, 
$X$ is a cyclic orbifold with a cyclic singularity $p$ of degree $d(p)=n$. 
For simplicity we denote curves on $\mathbb F_n$ and on $X$ by the same letters.
Endow $X$ with an orbifold structure with the branch divisor
 \begin{equation}\label{eqn:(3)}
 \Delta= \left(1-\frac{1}{m}\right) D+ \sum_{i=1}^s \left(1-\frac{1}{m_i}\right) D_i \, ,
  \end{equation}
where $m_1,\ldots, m_s$ are pairwise coprime, and the $m_i$ are coprime to $m$ and $n$ (but possibly
$\gcd(n,m)>1$).

One can easily calculate that
 \begin{align}
 \label{eqn:H_2}
  & H_2(X,\ZZ) = H^2(X-\{p\},\ZZ) = \ZZ\langle C\rangle, \\
  \label{eqn:dual}
  & H_2(X-\{p\},\ZZ) = H^2(X,\ZZ) = \ZZ\langle E_0\rangle.
 \end{align}
Note that $C$ is a divisor passing through the singular point $p$.
Since the (co)homology groups in \eqref{eqn:H_2}, \eqref{eqn:dual} are dual, we can calculate the intersection numbers over $\mathbb{Q}$ and obtain
 $$
 C^2={\frac 1n}
 $$
in $H_2(X,\mathbb{Q})$. 
Indeed,
 we can write $E_0\equiv a\, C$, for some $a\in\ZZ$, and substituting this to $n=E_0^2=a\,E_0\cdot C=a$, we get 
$E_0 \equiv n\,C$. Then $C^2=\frac{1}{n^2} E_0^2=\frac1n$. 
The same follows from   (\ref{eqn:barD2}).
We also have
 $$
 D \equiv C+\beta E_0\equiv C+\beta nC=(1+\beta n)C.
 $$

Considering $X$ as an algebraic variety, we calculate the canonical divisor $K_X$ from the adjunction formula (noting that $C$ is
orbismooth):
 $$
 K_X\cdot C+C^2=-\chi_{\orb}(C) = 2g(C)-2 + \left(1-\frac{1}{n}\right).
 $$
Since $g(C)=0$ we get $K_X\cdot C+{\frac 1n}=-2+(1-{\frac 1n})$.
Writing $K_X=b\, C$, $b\in\mathbb{Q}$, and substituting in the above, we get $b=-(n+2)$. So finally
  $$
  K_X=-(n+2)C.
  $$
Since all $D_i$ are homologous to $E_0$, we obtain the following formula for the orbifold canonical class:
\begin{equation}\label{eqn:(5)}
  \begin{aligned}
  K_X^{\orb}& =K_X+\left(1-{\frac 1m}\right)D+\sum_{i=1}^s\left(1-{\frac 1{m_i}}\right)D_i \\ 
  &= -(n+2)C+\left(1-{\frac 1m}\right)(1+\beta n)C+\sum_{i=1}^s\left(1-{\frac 1{m_i}}\right)nC \\
 &=\left(-(n+2)+\left(1-{\frac 1m}\right)(1+\beta n)+n\sum_{i=1}^s\left(1-{\frac1{m_i}}\right)\right)C. 
 \end{aligned}
 \end{equation}

Now consider a Seifert bundle 
 $$
  M\rightarrow X
  $$
determined by the orbifold data (\ref{eqn:(3)}). We need to choose local invariants $0<j_i<m_i$, $\gcd(j_i,m_i)=1$,
for each $D_i$, and $0<j<m$, 
$\gcd(j,m)=1$, for $D$. These will be fixed later. The local invariant $j_p$ at the singular point is given by
applying Proposition \ref{prop:25-M}.

By Proposition \ref{prop:c1neg}, we know that the existence of a negative 
Sasakian structure on $M$ is equivalent to the ampleness of the orbifold canonical bundle $K_X^{\orb}$, 
and this is equivalent to the positivity of the coefficient in (\ref{eqn:(5)}). Now we specify some coefficients
in order to realize the desired cohomology groups.
Put $m=2$ and $\beta=2$, so by (\ref{eqn:(2)}), we have $g(D)=n$. By Theorem \ref{thm:H1(M)}, this gives $H_2(M,\ZZ)=\ZZ_2^{2n}$.
Calculating the coefficient in (\ref{eqn:(5)}), 
we get
  $$
  -{\frac 32}+n\sum_{i=1}^s \left(1-{\frac 1{m_i}}\right).
  $$
Choosing $m_i$ and $s$ large, we can get positivity for any $n$. As we said, we take $m, m_i$ all of them pairwise coprime, and
also $\gcd(m_i,n)=1$ for all $i$.

Note that we have done this calculation assuming that the assumptions of Theorem \ref{thm:H1(M)} are satisfied. 
The assumption (2) holds for $D_i$ since $H^2(X,\ZZ)=\ZZ\la E_0\ra$, $D_i\equiv E_0$, and $\gcd(m_i,n)=1$.
It holds for $D$ since $D\equiv (1+n\beta)C$, so the map $H^2(X,\ZZ)\to H^2(D,\ZZ_m)$ is $[E_0]\mapsto 1+n\beta$, and
$\gcd(1+n\beta,m)=1$ for $\beta=m=2$.

Let us now check assumption (3) of Theorem  \ref{thm:H1(M)}.
By Proposition \ref{prop:c1},
  $$
  c_1(M)=c_1({B})+{\frac bm}[C+\beta E_0]+\sum_i\frac{b_i}{m_i}[E_0],
  $$
for a line bundle $B$, so $c_1(B)=q[E_0]$, for some integer $q\in \ZZ$.
As we take all $m,m_1,\ldots, m_s$ pairwise coprime, we have 
$\mu=m\cdot m_1\cdots m_s$. 
We need to check that we can arrange for $\mu\,c_1(M)$ to be integral and primitive in $H^2(X-P,\ZZ)$, 
which means that we need
 $$
  c_1(M/\mu)=\mu\,c_1(M)=[C].
 $$
Note also that as $c_1(M)>0$, we have by Proposition \ref{prop:c1} that $M$ is Sasakian.
We compute
 $$ 
 \mu\, c_1(M)=b\cdot m_1\cdots m_s[C+\beta E_0]+\sum_ib_i\cdot m\cdot m_1\cdots\hat{m_i}\cdots m_s[E_0]+\mu\,c_1({B}).
 $$
In $H^2(X,\ZZ)$ we have $[E_0]=n[C]$, so $c_1(M/\mu)=k[C]$ with 
 \begin{equation}\label{eqn:k=1}
 k=(1+{n\beta})bm_1\cdots m_s+n\left(\sum_i b_i mm_1\cdots\hat{m_i}\cdots m_s\right)+ \mu q.
   \end{equation}
Since we already fixed $m=\beta=2$,
we need to arrange $b,b_i,m_i,q$ to get $k=1$. First note that
 \begin{equation}\label{eqn:gcd}
  \gcd((1+{n\beta})m_1\cdots m_s, n mm_1\cdots\hat{m_i}\cdots m_s) =1,
   \end{equation}
by the choice of $m,m_i$. Then there are $\bar b,\bar b_i\in\ZZ$ such that
 $$
 1=(1+{n\beta})\bar b m_1\cdots m_s+n\left(\sum_i \bar b_i mm_1\cdots\hat{m_i}\cdots m_s\right).
 $$
Take $\bar b=b +m q_0$, $\bar b_i=b_i+m_i q_i$, where $0\leq b<m$ and $0\leq b_i<m_i$. Then
 $$
 1=(1+{n\beta}) b m_1\cdots m_s+n\left(\sum_i b_i mm_1\cdots\hat{m_i}\cdots m_s\right) + \left((1+n\beta)q_0+\sum n q_i\right)\mu,
 $$
as required. Finally note that $\gcd(b,m)=1$ and $\gcd(b_i,m_i)=1$, so that we can take local invariants 
$j,j_i$ with $bj\equiv 1 \pmod{m}$ and
$b_ij_i\equiv 1\pmod{m_i}$ and apply Proposition \ref{prop:25-M}.

It remains to show that $\pi_1(M)=1$.  By Proposition \ref{prop:nori}  applied to $\pi_1({\mathbb F}_n-S)$,  $S=D\cup D_1\cup \ldots \cup D_s$, we get that this group is abelian.
Now take the curve $E_\infty$, that intersects transversally $S$ at one point. Take a loop $\delta$ around $E_\infty$, then
$\pi_1({\mathbb F}_n-(S\cup E_\infty))$ is generated by $\delta$ and $\pi_1({\mathbb F}_n-S)$.
Let $\gamma,\gamma_1,\ldots, \gamma_s$ be loops around each of the divisors $D,D_1,\ldots D_s$. 
Take a general fiber $C$ of
${\mathbb F}_n \to \CP^1$, 
and intersecting with $S\cup E_\infty$, we get a homotopy 
$\delta=\gamma^\beta \prod_{i=1}^s\gamma_i$,
since $\beta =C \cdot D$ and $1=C\cdot D_i$, $i=1,\ldots, s$. 
Note also that $\delta,\gamma$ commute since the curves $E_\infty,D$ intersect transversally at one point. 
Also $\gamma_i,\gamma_j$ commute since $D_i,D_j$ intersect transversally (just take the link of the complement of
the two curves around an intersection point, which is a $2$-torus containing both loops; hence they commute in this $T^2$).
Analogously $\gamma_i,\gamma$ commute. Finally, $\delta=\gamma^\beta \prod_{j=1}^s\gamma_j$, so it also
commutes with $\gamma_i$ in $\pi_1({\mathbb F}_n-(S\cup E_\infty))$.

Now recall that $\pi_1^{\orb}(X)$ is the quotient of $\pi_1(X-S)=\pi_1({\mathbb F}_n-(S\cup E_\infty))$ with
the relations $\gamma^m=1$, $\gamma_i^{m_i}=1$. Then we have that
$\pi_1^{\orb}(X)$ is also abelian. It follows that $\pi_1(M)$ is abelian. Since $H_1(M,{\mathbb Z})=0$ we get necessarily $\pi_1(M)=1$. 
By Theorem \ref{thm:H1(M)},  $M$ is a rational homology sphere with $H_2(M,\ZZ)=\ZZ_2^{2n}$.
\end{proof}

The case ${\mathbb Z}_m^2$, $m< 5$ (that is, $m=2,3,4$), is covered by the following result.

\begin{proposition}\label{prop:2-neg-pos-main} 
The simply connected rational homology sphere with $H_2(M,\ZZ)=\ZZ_m^{2}$, $\gcd(m,7)=1$, and spin,
admits a negative Sasakian structure.
\end{proposition}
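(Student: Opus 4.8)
The plan is to adapt the proof of Proposition \ref{prop:1-neg-pos-main}: realize $M$ as a Seifert bundle over a cyclic K\"ahler orbifold $X$ with $b_2(X)=1$ (so that $M$ is a rational homology sphere) whose branch divisor consists of one genus-one isotropy curve $D$ of multiplicity $m$ together with rational isotropy curves $D_1,\dots,D_s$ of pairwise coprime multiplicities $m_i$, all coprime to $m$ and to the orders of the singular points of $X$. By Theorem \ref{thm:H1(M)} such an $M$ has $H_2(M,\ZZ)=\ZZ^{\,b_2(X)-1}\oplus\bigoplus_i\ZZ_{m_i}^{2g(D_i)}\oplus\ZZ_m^{2g(D)}=\ZZ_m^2$, so the whole problem reduces to constructing a suitable $X$. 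The delicate point is that a genus-one curve is numerically rigid: on $X=\CP^2$ a genus-one curve is a cubic, and on $X=\CP^2(1,1,2)$ it is a weighted quartic, so the relevant intersection number $\deg D$ is $3$, resp.\ $4$, and condition (2) of Theorem \ref{thm:H1(M)} then fails for $m=3$, resp.\ $m=2,4$. To get around this I would use the genus-one curve $D\equiv 3H-E_1-E_2$ on $\mathrm{Bl}_2\CP^2$ (the proper transform of a plane cubic through the two blown-up points), which has $D^2=7$; blowing up further to create a chain of smooth rational curves of self-intersection $\le -2$ disjoint from $D$ (or meeting a tail of it transversally in one point) and contracting that chain by Proposition \ref{prop:constr-orbi} should produce a cyclic orbifold $X$ with $b_2(X)=1$ on which $D$ is orbismooth of genus $1$ and of degree $\pm 7$ against the generator of $H^2(X,\ZZ)$ --- whence the hypothesis $\gcd(m,7)=1$. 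One then adds the rational curves $D_i$ as pushdowns of general members of positive linear systems avoiding the singular points, intersecting $D$ and each other transversally.

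Granting $X$, equip it with the branch divisor $\Delta=(1-\tfrac1m)D+\sum_{i=1}^s(1-\tfrac1{m_i})D_i$. Computing $K_X$ by the orbifold adjunction formula on the orbismooth curves (as in the computation of $K_X$ in Proposition \ref{prop:1-neg-pos-main}, with the self-intersections obtained from \eqref{eqn:barD2}) gives
\[
K_X^{\orb}=K_X+\Big(1-\tfrac1m\Big)D+\sum_{i=1}^s\Big(1-\tfrac1{m_i}\Big)D_i .
\]
Since $b_2(X)=1$, the effective classes $D,D_i$ are positive multiples of the ample generator, so by choosing $s$ and the $m_i$ large the coefficient of $K_X^{\orb}$ becomes positive; thus $K_X^{\orb}$ is ample and, by Proposition \ref{prop:c1neg}, the Seifert bundle $M\to X$ carries a negative Sasakian structure.

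It remains to choose the Seifert data so that $M$ is a simply connected rational homology sphere with $H_2(M,\ZZ)=\ZZ_m^2$, and this is a verbatim repetition of the corresponding part of Proposition \ref{prop:1-neg-pos-main}. Pick local invariants $j,j_i$ with $\gcd(j,m)=\gcd(j_i,m_i)=1$ and, via Proposition \ref{prop:25-M}, compatible local invariants at the singular points; Proposition \ref{prop:c1} then yields Seifert bundles $M\to X$ with $c_1(M)=c_1(B)+\tfrac{b}{m}[D]+\sum_i\tfrac{b_i}{m_i}[D_i]$. A B\'ezout-coefficient argument --- using that $\deg D=\pm7$ is coprime to $m$ and that the $\deg D_i$ can be chosen coprime to the $m_i$, so that the relevant gcd (as in \eqref{eqn:gcd}) equals $1$ --- lets one choose the line bundle $B$ and the coefficients $b,b_i$ so that $c_1(M)>0$ (hence $M$ is Sasakian, by Proposition \ref{prop:c1}) and $c_1(M/\mu)$ is the primitive generator of $H^2(X-P,\ZZ)$, which is condition (3) of Theorem \ref{thm:H1(M)}. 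Condition (1) holds since $X$ is a contraction of a simply connected rational surface, and condition (2), surjectivity of $H^2(X,\ZZ)\to H^2(D,\ZZ_m)\oplus\bigoplus_iH^2(D_i,\ZZ_{m_i})$, follows from $\gcd(\deg D,m)=1$ and $\gcd(\deg D_i,m_i)=1$. So Theorem \ref{thm:H1(M)} gives $H_1(M,\ZZ)=0$ and $H_2(M,\ZZ)=\ZZ_m^2$. Finally, by Proposition \ref{prop:nori} applied to the simply connected rational surface underlying $X$ minus the positive-self-intersection branch curves (and the contracted chain), together with the relations $\gamma^m=\gamma_i^{m_i}=1$ and the commutation of meridians around transversally intersecting curves, $\pi_1^{\orb}(X)$ is abelian; since $H_1(M,\ZZ)=0$, this forces $\pi_1(M)=1$, exactly as at the end of Proposition \ref{prop:1-neg-pos-main}.

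I expect the main obstacle to be the first paragraph: producing a $b_2=1$ cyclic orbifold carrying a genus-one orbismooth curve whose degree is coprime to $m$. One has essentially no numerical room for a genus-one curve, and one must simultaneously keep $D$ smooth and in controlled position relative to the contracted chain, make the contraction land at $b_2(X)=1$, and keep the orders of the resulting cyclic singularities coprime to the $m_i$ (and compatible with the local-invariant bookkeeping of Proposition \ref{prop:c1}). Once $X$ is correctly set up, the rest is a routine adaptation of Proposition \ref{prop:1-neg-pos-main}.
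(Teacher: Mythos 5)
Your overall strategy is the same as the paper's: find a genus-one curve of self-intersection $7$ on a blow-up of $\CP^2$, contract chains of rational curves of self-intersection $\le -2$ to reach a cyclic orbifold with $b_2=1$, add auxiliary rational branch curves of large coprime multiplicities to force $K_X^{\orb}>0$, and then run the standard Seifert-bundle machinery (Propositions \ref{prop:c1}, \ref{prop:c1neg}, \ref{prop:25-M}, Theorem \ref{thm:H1(M)}, Nori's theorem). You also correctly identify why the hypothesis $\gcd(m,7)=1$ appears: the genus-one branch curve ends up of degree $7$ against the generator of $H^2(X,\ZZ)$, and condition (2) of Theorem \ref{thm:H1(M)} is exactly this coprimality. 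The second and third paragraphs of your argument are sound and match the paper essentially verbatim.

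The genuine gap is the one you flag yourself: the existence of the orbifold $X$ is asserted, not constructed, and the construction is not as routine as ``blow up further and contract a disjoint chain.'' Starting from $\mathrm{Bl}_2\CP^2$ with $b_2=3$, every additional blow-up raises $b_2$ by one, so after $t$ further blow-ups you must contract chains with a total of $2+t$ components, all of self-intersection $\le -2$; the obvious candidates ($E_1$, $E_2$, the line through the two points) are $(-1)$-curves, and naive attempts to fix this (e.g.\ blowing up the intersections of that line with $E_1,E_2$) produce chains that are one component short, landing at $b_2=2$ rather than $1$. The paper resolves this with a specific trick: take the cubic together with a \emph{tangent} line, blow up three times at the tangency (which manufactures a chain $A\cup B$ of two $(-2)$-curves plus a $(-2)$-proper transform $\check L$ of the line, i.e.\ three contractible components in $\CP^2\#3\overline{\CP}^2$), and contract $A\cup B$ and $\check L$ to two cyclic points of orders $3$ and $2$. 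The resulting $\bar C$ has $\bar C \equiv 7\bar E$ and the generator $x$ of $H^2(\bar X,\ZZ)$ satisfies $x\cdot\bar C=7$, which is the computation your sketch still owes. Note also that in the paper's configuration $\bar C$ passes through both singular points (it is orbismooth there but not disjoint from them), and the orders $2,3$ of those points force the extra constraint that the auxiliary multiplicities $m_i$ be coprime to $3$ (and the $L_i$ have degree $3$ against $\bar E$, not degree $1$); your bookkeeping of the local invariants and of condition (2) for the $D_i$ would have to be adjusted to whatever singular points your contraction actually produces. Until a concrete configuration with all these properties is exhibited, the proof is incomplete at its central step.
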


\begin{proof}
Consider a smooth cubic $C\subset\CP^2$ and a line $L\subset\CP^2$, 
tangent to the cubic at some point $p_1\in L\cap C$
and intersecting transversally at another point $p_2\in L\cap C$. We blow-up at $p_1$, and let $\tilde L$, $\tilde C$
denote the proper transforms. 
Since the initial curves were tangent (intersecting with multiplicity two), 
$\tilde L$ and $\tilde C$ intersect transversally at a point
$\tilde p$. Blowing up again at $\tilde p$, we get two curves $\hat L$ and $\hat C$ in 
$\CP^2\#2\overline{\CP}^2$ with $\hat L\cap \hat C=\{p_2\}$. 
The second blow-up yields a $(-2)$-curve $B$ (obtained by a blow-up of the exceptional divisor) and a new exceptional divisor $E_1$. Since $E_1$ and $\tilde L$ intersect, one can blow up a third time and get new curves $\check{L}, \check{E}{}_1=A, B,E$, 
in 
 $$X=\CP^2\#3\overline{\CP}^2,$$
 where $E$ is the last exceptional divisor. Note that now $\check{L}{}^2=-2$,
and $A\cup B$ is a chain of two $(-2)$-curves. 
Contracting $A\cup B$ and $\check{L}$ and applying Proposition \ref{prop:constr-orbi} we get a cyclic orbifold $\bar X$. Let
 $$\varpi:X\to \bar X$$ 
denote the blow-down map. Let $q_1=\varpi(A\cup B)$ and $q_2=\varpi(\check{L})$ be the two
singular points $P=\{q_1,q_2\}$ of orders $d(q_1)=3$ and $d(q_2)=2$. 

We have that $\bar C=\varpi(\hat C)$ is a genus $1$ orbismooth curve with two singular points
$q_1,q_2$. Let also $\bar E=\varpi(E)$, which is a genus $0$ orbismooth
curve through $q_1,q_2$, although the intersection $\bar C\cap \bar E$ is not nice at $q_1$.

The cohomology of the blow-up is 
$H^2(X,\ZZ)=\ZZ\langle L, A,B,E\rangle$. As we contract $3$ curves, we have
$H^2(\bar X,\QQ)=\QQ\la \bar E \ra$. Moreover $\pi_1(X)=1$, hence $H^2(\bar X,\ZZ)$ is torsion-free.
We compute the intersection numbers using (\ref{eqn:barD2}) to get
 \begin{align*}
  \bar C^2 &=  \hat{C}^2+\frac12+\frac23=7 + \frac76=\frac{49}{6} ,\\
  \bar E^2 &= E^2 + \frac12+\frac23= -1+\frac76=\frac16,\\
  \bar E\cdot \bar C &= E\cdot C+\frac12+\frac23= 0+\frac76 =\frac76,
  \end{align*}
hence $\bar C=7\bar E$. 
It is clear that the cycle $x=\bar C-\bar E$ 
can be pushed-off $P$ (since $x\cdot A=x\cdot B= x\cdot \check{L}=0$), so $x\in H_2(\bar X-P,\ZZ)$. Since $x=6\bar E$ 
and $\bar E\cdot x=1$, both $x,\bar E$ are generators of their respective groups:
 \begin{align*}
 H_2(\bar X,\ZZ) &= H^2(\bar X-P,\ZZ)= \ZZ\la \bar E\ra,\\
 H^2(\bar X,\ZZ) &= H_2(\bar X-P,\ZZ)= \ZZ\la x\ra.
 \end{align*}
For a generic line $L'$ of $\CP^2$, we have 
$L'=\check{L}+B+2A+3E$,
so $\bar L'=\varpi(L')\equiv 3\bar E$.

Take a collection of generic lines $L_1,\ldots, L_s$ in $\CP^2$ that we transfer to $\bar X$.
Endow $\bar X$ with the structure of a cyclic K\"ahler orbifold determined by the set of singular points 
$P=\{p_1,p_2\}$ and a branch divisor
  $$
  \Delta=\left(1-{\frac 1m}\right)\bar C + \sum_{i=1}^s \left(1-\frac{1}{m_i}\right) L_i,
  $$
where $m_i$ are integers pairwise coprime and coprime to $m$.
We construct a Seifert bundle $M\rightarrow \bar X$ determined by local data associated to this
branch divisor. We start by arranging the Chern class of the Seifert bundle
 $c_1(M)=c_1(B)+{\frac bm}[\bar C] +\sum \frac{b_i}{m_i} L_i$, 
 where $c_1(B)=q[ \bar E]$ for some integer $q$. 
Then $\mu=mm_1\cdots m_s$ and
\begin{equation}\label{eqn:c1Mm}
  \begin{aligned}
  c_1(M/\mu) &= m m_1\cdots m_s c_1(B)+bm_1\cdots m_s [\bar C] + \sum
  b_i m m_1\cdots \hat{m}_i\cdots m_s [L'] \\ 
   &= \Big(mm_1\cdots m_s q+7bm_1\cdots m_s +\sum 3 b_i m m_1\cdots \hat{m}_i\cdots m_s \Big) [\bar E].
  \end{aligned}
  \end{equation}
As we are assuming $\gcd(m,7)=1$, there is
 $$
 \gcd(mm_1\cdots m_s, 7 m_1\cdots m_s , 3m m_1\cdots \hat{m}_i\cdots m_s )=1,
 $$
where we also choose $m_i$ coprime to $7$ and to $3$.
So we can solve (\ref{eqn:c1Mm}) to have $c_1(M/\mu)=[\bar E]$. We can arrange that $0<b<m$, $0<b_i<m_i$
and $\gcd(b,m)=1$, $\gcd(b_i,m_i)=1$.
This implies first that $c_1(M)>0$ hence $M$ is Sasakian,
and second that $c_1(M/m) \in H^2(\bar X-P,\ZZ)$ is primitive.
To check the assumptions of Theorem \ref{thm:H1(M)}, we need to
 show the surjectivity of the natural map $H^2(\bar X,\ZZ)\rightarrow H^2(\bar C,\ZZ_m)$. But
 this sends $x\mapsto x\cdot \bar C=7$, so the map is surjective because $\gcd(m,7)=1$.
Also the maps $H^2(\bar X,\ZZ)\rightarrow H^2(\bar L_i,\ZZ_{m_i})$ are surjective
(since $\gcd(m_i,3)=1$),
and so is the map to the direct sum of these.

Now let us compute $K_{\bar X}^{\orb}$. We start by writing $K_{\bar X}=b [\bar E]$ and computing 
$K_{\bar X}\cdot \bar C+\bar C^2=-\chi_{\orb}(\bar C)$. This gives
 $$
  b \frac76 +\frac{49}6 = 1-\frac12 + 1-\frac13  \implies b= -6.
  $$
Therefore 
 $$
K_{\bar X}^{\orb}= \left(-6 + 7 \Big(1-\frac1m\Big) + \sum 3 \Big(1-\frac{1}{m_i}\Big) \right) [\bar E]>0,
 $$
for $s$ and $m_i$ large enough (e.g.\ $s\geq 2$). So $M$ has a negative Sasakian structure.

Finally, we have to calculate $\pi_1^{\orb}(\bar X)$. First we compute $\pi_1(\bar X-P)$, which is generated
by loops $\alpha_1,\alpha_2$ around the points $q_1,q_2$. Note that $\alpha_1^3=1,\alpha_2^2=1$. The
sphere $E$ going through $q_1,q_2$ gives a homotopy $\alpha_1=\alpha_2$. This implies that both
$\alpha_1=\alpha_2=1$, and hence $\pi_1(\bar X-P)=\pi_1(X-(A\cup B\cup \check{L}))=1$. We
remove $\check{C}\cup L_1\cup \ldots\cup L_s$, hence 
 \begin{equation}\label{eqn:pi1XX}
 \pi_1(X-(A\cup B\cup \check{L}\cup \check{C}\cup L_1\cup \ldots\cup L_s)) =\pi_1(\bar X-(\bar{C} \cup L_1\cup
 \ldots \cup L_s))
 \end{equation}
is generated by loops $\gamma,\gamma_1,\ldots, \gamma_s$, where $\gamma$ surrounds $\check{C}$
 and $\gamma_i$ surrounds $L_i$. As all these curves intersect transversally, we have that all these loops
 commute. Therefore (\ref{eqn:pi1XX}) is abelian. This implies that $\pi_1^{\orb}(\bar X)$ is abelian, and
 hence $M$ is simply-connected. Again, by Theorem \ref{thm:H1(M)}, $M$ is a rational homology sphere with $H_2(M,\ZZ)=\ZZ_m^{2}$.
\end{proof}

\begin{remark} In recent work \cite{PW} a complete classification of Sasaki-Einstein rational homology spheres was obtained. This result is a positive type counterpart to the present work which deals with the negative case. One can find more results on the positive case in \cite{BN}.
\end{remark}

\section{Quasi-regular negative Sasakian structures on $\#_k(S^2\times S^3)$}\label{sect:quasi-regular-torsionfree}

In this section we give a complete answer to  Question \ref{quest:connected-sum}:

\begin{theorem} \label{thm:all_k}
Any $\#_k(S^2\times S^3)$
admits a quasi-regular negative Sasakian structure.
\end{theorem}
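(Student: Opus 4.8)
The plan is to realize each $\#_k(S^2\times S^3)$ as the total space of a Seifert bundle $M\to X$ over a cyclic Kähler orbifold $X$ with $b_2(X)=2$ (so that, by Theorem \ref{thm:H1(M)}, the torsion-free part of $H_2(M,\ZZ)$ has rank $k$), with no torsion in $H_2(M,\ZZ)$, with $\pi_1(M)=1$, and with $K_X^{\orb}$ ample so that Proposition \ref{prop:c1neg} delivers the negative Sasakian structure. To get $H_2(M,\ZZ)=\ZZ^k$ with no torsion I would avoid isotropy \emph{surfaces} altogether: take $X$ to be a cyclic Kähler orbifold with only isolated cyclic singularities (obtained by contracting chains of $(-b_i)$-curves, $b_i\ge 2$, on a rational surface via Proposition \ref{prop:constr-orbi}), and branch divisor $\Delta=0$. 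Then Theorem \ref{thm:H1(M)} gives $H_2(M,\ZZ)=\ZZ^k$ with $k+1=b_2(X)$ as soon as conditions (1)--(3) hold; condition (2) is vacuous (no $D_i$), condition (1) holds if the underlying surface is simply connected, and condition (3) is the primitivity of $c_1(M/\mu)$, which one arranges by a suitable choice of line bundle $B$ as in Proposition \ref{prop:c1}.

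Concretely I would start from $\mathbb{F}_n$ (or an iterated blow-up of $\CP^2$) and contract one or more chains of rational curves of self-intersection $\le -2$. To make $b_2(X)=2$, begin with a surface $\hat X$ of Picard number $k+1+(\text{number of curves contracted})$ and contract exactly the right number of curves. The canonical class computation is the crucial point: after contracting a chain, $K_{\bar X}=\varpi_*K_{\hat X}$ plus fractional corrections supported at the singular points (computed by the same local formula as in the displayed computation of $K_{\bar X}^{\orb}$ in Proposition \ref{prop:2-neg-pos-main}, via $K_{\bar X}\cdot \bar E+\bar E^2=-\chi_{\orb}(\bar E)$ on exceptional curves). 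I would choose the configuration so that $K_{\bar X}$ — with no branch divisor to help, so $K^{\orb}_{\bar X}=K_{\bar X}$ — is already ample. The natural source of ampleness: the surfaces $\mathbb{F}_n$ with large $n$ have $K_{\mathbb{F}_n}\equiv (n-2)C-2E_0$ with negative intersection with $E_\infty$, and contracting $E_\infty$ (self-intersection $-n$) flips the sign, as in Proposition \ref{prop:1-neg-pos-main}; iterating this kind of move over blow-ups of ruled surfaces of high genus base should give, for every $k$, a cyclic orbifold $X$ with $b_2(X)=k+1$, isolated quotient singularities, and $K_X$ ample. Simple-connectivity of $M$ follows from Proposition \ref{prop:nori} (or directly, since there are no branch surfaces, $\pi_1^{\orb}(X)=\pi_1(X)=1$ once the underlying surface is simply connected, using that contracting rational curves does not change $\pi_1$), combined with $H_1(M,\ZZ)=0$.

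The main obstacle I expect is the bookkeeping that makes \emph{all three} of the following hold simultaneously for every $k\ge 0$: (i) $b_2(X)=k+1$ exactly, (ii) $X$ has only isolated cyclic singularities so that $H_2(M,\ZZ)$ is torsion-free, and (iii) $K_X$ ample (not merely nef). Ampleness is delicate because contracting negative curves on a rational surface whose canonical class is "mostly negative" typically leaves $K_X$ indefinite; one needs enough blow-ups along a high-genus ruled surface, or enough independent contractions, to push $K_X$ into the ample cone, and one must check the Nakai--Moishezon criterion on $X$ (equivalently, positivity against the finitely many curve classes, using the $\QQ$-intersection form on $H_2(X,\QQ)$ as computed by \eqref{eqn:barD2}). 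A secondary nuisance is the primitivity condition (3) of Theorem \ref{thm:H1(M)}: with several singular points one must verify that $\mu\,c_1(M)$ generates $H^2(X-P,\ZZ)$, which is a gcd computation over the orders $d(q_j)$ of the singular points, solvable by choosing the $d(q_j)$ to be pairwise coprime (achievable by choosing the lengths and self-intersections of the contracted chains appropriately, via Proposition \ref{prop:Hirz-Jung}) together with a suitable twist by $B$. Once these are in place, Proposition \ref{prop:c1neg} finishes the proof.
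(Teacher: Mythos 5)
Your general framework (a Seifert bundle over a cyclic K\"ahler orbifold with $K_X^{\orb}$ ample, the conditions of Theorem \ref{thm:H1(M)}, and Proposition \ref{prop:c1neg}) is the right one, and for large $k$ your idea of taking an empty branch divisor and contracting a chain of negative curves is exactly what the paper does. But the proposal has a genuine gap precisely at the step you flag as ``the main obstacle'': you never produce, for a given $k$, a simply connected surface on which contracting a chain makes the canonical class ample, and your two candidate sources of ampleness both fail. First, contracting $E_\infty\subset\mathbb{F}_n$ does \emph{not} flip the sign of $K$: the paper's own computation in Proposition \ref{prop:1-neg-pos-main} gives $K_X=-(n+2)C$, still anti-ample --- the fractional correction of \eqref{eqn:barD2} is far too small, which is exactly why that proposition needs a large branch divisor. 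Blow-ups of rational surfaces in general have $K$ so negative that reaching the ample cone by contractions alone requires an elaborate Lee--Park-type configuration, which you do not supply. Second, ruled surfaces over a high-genus base are not simply connected, so $H_1(X,\ZZ)\neq 0$, condition (1) of Theorem \ref{thm:H1(M)} fails, and $M$ would not be $\#_k(S^2\times S^3)$. The paper's fix is to start instead from simply connected elliptic surfaces $Y\to\CP^1$ with section and $\chi(Y)=12N$, $N\geq 3$: there $K_Y=(N-2)F$ is already nef and effective, and contracting the chain formed by the zero section and $n-1$ components of an $I_n$ fiber yields an orbifold with $\rho(X)=1$, $\mathrm{NS}(X)=\ZZ\la F\ra$ and $K_X^{\orb}\cdot F=\frac{(N-2)n}{(N-1)n+1}>0$, so ampleness reduces to a single inequality; varying $N$ and $n$ realizes every $b_2(X)=k+1$ with $k\geq 4$. (Arranging $\rho(X)=1$ needs a nontrivial input, Proposition \ref{prop:ell_surf}, on Noether--Lefschetz loci of elliptic surfaces.)

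The second gap is small $k$. The contraction construction only reaches $k\geq 4$, and your ``no branch divisor'' plan is hardest exactly when $b_2(X)$ is small: a simply connected $\QQ$-homology $\CP^2$ or del Pezzo-like surface with cyclic singularities and ample $K$ is not something you can write down by contracting one or two chains on a Hirzebruch surface. The paper instead handles $k=1,2,3$ with \emph{smooth} orbifold structures on $\CP^2\#\overline{\CP}^2$ and $\CP^2\#3\overline{\CP}^2$ whose branch divisors are proper transforms of conics: since these branch curves have genus $0$, Theorem \ref{thm:H1(M)} still gives torsion-free $H_2(M,\ZZ)=\ZZ^k$, while the weights $\bigl(1-\frac{1}{m_i}\bigr)$ push $K^{\orb}$ into the ample cone. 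So insisting on an empty branch divisor is an unnecessary constraint that blocks the small-$k$ cases. (Also a slip: you twice write $b_2(X)=2$ where you mean $b_2(X)=k+1$.)
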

The proof of this result will follow from Propositions \ref{prop:large_k}, \ref{prop:k=1} and \ref{prop:k=2,3}.

In the proof of Proposition \ref{prop:large_k}
(for most part of Theorem \ref{thm:all_k}) we will use elliptic surfaces with section.
For background, the reader is referred to  \cite{S-Sh}. 
Let 
$$f: Y\rightarrow \CC P^1
$$
 be an elliptic surface with section ${O}$. 
Denote the general fiber by $F$.
The classification of singular fibers goes back to Kodaira,
but we will only need the semi-stable fibers, denoted by $I_n$, $n\in\NN$.
These are either a nodal cubic ($n=1$)
or cycles of $n\geq 2$ smooth rational curves intersecting transversally.
We will number the fiber components cyclically $\Theta_0,\Theta_1,\hdots,\Theta_{n-1}$
such that $\Theta_0$ meets $O$,
and among fiber components, $\Theta_i$ intersects exactly $\Theta_{i-1}$ and $\Theta_{i+1}$,
indices taken modulo $n$.

The invariants of $Y$ are determined by the  Euler characteristic $\chi(Y)=12N$.
Namely we have
\[
q=0,\;\;\; p_g=N-1, \;\;\; b_2 = 12N-2, \;\;\; \text{ and } \;\; h^{1,1}=10N.
\]
Note that the components of a fiber of type $I_n$ together with the zero section
generate a hyperbolic sublattice $L$ of the N\'eron-Severi group NS$(Y)$, of rank $n+1$.
Hence Lefschetz' theorem on $(1,1)$-classes predicts that $n\leq h^{1,1}-1=10N-1$. 
Let $\rho$ denote the Picard number of $Y$.

\begin{proposition} \label{prop:ell_surf}
Let $N>1$. For any $n\in\NN$ such that $n\leq 10N-1$,
there is an elliptic surface $Y\to\CC P^1$ 
with section and $\chi(Y)=12N$ such that there is a singular fiber of type $I_n$
and $\rho=n+1$.
\end{proposition}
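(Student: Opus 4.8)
The plan is to realise $Y$ explicitly as a Weierstrass elliptic surface whose only reducible singular fibre is an $I_n$ and whose Mordell--Weil rank is zero, and then to read off $\rho(Y)=n+1$ from the Shioda--Tate formula. Take $Y$ attached to a Weierstrass equation $y^{2}=x^{3}+a(t)\,x+b(t)$ over $\CP^{1}$, with $a$ a section of $\mathcal{O}(4N)$ and $b$ a section of $\mathcal{O}(6N)$; then $Y$ is relatively minimal with $\chi(Y)=12N$ and discriminant $\Delta=4a^{3}+27b^{2}\in H^{0}(\mathcal{O}(12N))$. An $I_{n}$ fibre at $t=0$ means $a(0)\neq0$ and $\operatorname{ord}_{0}\Delta=n$, and taking the remaining $12N-n$ zeros of $\Delta$ simple produces $I_{1}$ fibres. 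The congruence $4a^{3}+27b^{2}\equiv0\pmod{t^{n}}$ forces the first $n$ Taylor coefficients of $b$ at $t=0$ to agree with those of a square root $c$ of $-\tfrac{4}{27}a^{3}$; since $b$ has only $6N+1$ coefficients, for $n>6N+1$ one must in addition choose $a$ so that the Taylor coefficients of $c$ in degrees $6N+1,\dots,n-1$ vanish. This imposes $n-1-6N$ polynomial conditions on the $4N+1$ coefficients of $a$, which is solvable with room to spare exactly when $n\le10N-1$; a generic $a$ on the resulting locus then yields $\operatorname{ord}_{0}\Delta=n$ (not larger) and $\Delta\not\equiv0$. (That an elliptic surface with $\chi=12N$ and a fibre of type $I_n$ exists for every $n\le 10N-1$ is also part of the classical realisation of Kodaira fibre configurations.)

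Now $\rho(Y)\ge n+1$ is automatic: by Shioda--Tate the classes of the zero section $O$, of a general fibre $F$, and of $n-1$ of the $n$ components of the $I_{n}$ fibre span a non-degenerate rank-$(n+1)$ sublattice of $\mathrm{NS}(Y)$, and $\rho(Y)=n+1+\rk\mathrm{MW}(Y)$; so equality is the same as $\rk\mathrm{MW}(Y)=0$. For the generic member of the Weierstrass family above, the remaining $12N-n$ fibres are the simple $I_{1}$'s; and since $N>1$ the transcendental lattice of the generic member is non-zero, so the locus in parameter space on which $\rho$ exceeds its generic value $\rho_{\mathrm{gen}}\ge n+1$ is a countable union of proper closed analytic subsets, whence the very general $Y$ has $\rho(Y)=\rho_{\mathrm{gen}}$. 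To pin down $\rho_{\mathrm{gen}}=n+1$ one can either exhibit a single elliptic surface with $\chi=12N$, exactly one $I_{n}$ fibre, otherwise only $I_{1}$'s, and $\rk\mathrm{MW}=0$ --- the extremal case $n=10N-1$, where $\rho=10N=h^{1,1}$, being classical --- and conclude by semicontinuity of the Picard number; or invoke infinitesimal Torelli for elliptic surfaces together with the fact that the family of such surfaces (the $I_{n}$ fibre allowed to move) has dimension $10N-n-1$, equal to the number of moduli of the associated weight-two Hodge structures whose transcendental lattice has rank $12N-n-3$.

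The main obstacle is precisely this last point: that colliding $n$ of the $12N$ baseline $I_{1}$ fibres into one $I_{n}$ fibre raises $\rho$ by exactly $n-1$ and no more, i.e.\ that the very general surface of the $I_{n}$-family carries no algebraic class beyond those forced by Shioda--Tate. It is intertwined with the verification in the construction that forcing $\operatorname{ord}_{0}\Delta=n$ for $n$ close to $10N-1$ does not degenerate $\Delta$ identically to zero --- this is exactly where the hypothesis $n\le10N-1$, i.e.\ $n+1\le h^{1,1}(Y)$, is used. Once an elliptic surface with the prescribed $I_{n}$ fibre and $\rho=n+1$ is produced, the assertion is immediate.
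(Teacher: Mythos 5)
Your proposal has the right skeleton --- produce a surface with a single $I_n$ fibre and only $I_1$'s otherwise, note $\rho\ge n+1$ from Shioda--Tate, and reduce to showing that the very general such surface carries no further algebraic classes --- and you correctly flag the last step as the crux. But neither of the two essential inputs is actually established. For existence, the coefficient count is not a proof: imposing $\operatorname{ord}_0(4a^3+27b^2)\ge n$ gives polynomial (not linear) conditions on the coefficients of $a$, and counting equations against unknowns shows neither that the solution set is non-empty nor that a solution has $\operatorname{ord}_0\Delta$ exactly $n$ with $\Delta\not\equiv 0$. In fact your count ($n-1-6N$ conditions on $4N+1$ coefficients) would predict solvability up to $n\le 10N+1$, beyond the true bound $n\le 10N-1$ forced by Davenport's theorem ($\deg(f^3-g^2)\ge \tfrac12\deg f+1$ whenever $f^3\ne g^2$); so the count cannot by itself explain why the construction works exactly in the stated range. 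The paper instead obtains the extremal surface with an $I_{10N-1}$ fibre from Davenport--Stothers and reaches all smaller $n$ by a degeneration argument.

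For the Picard number, the reduction to ``the very general member of the $I_n$-stratum has $\rho=\rho_{\mathrm{gen}}$'' is correct but vacuous until $\rho_{\mathrm{gen}}=n+1$ is proved, and both of your suggested routes fail as stated. Semicontinuity from the extremal case does not transfer: the $I_{10N-1}$ surface is a \emph{specialization} of the $I_n$-stratum, and specialization only bounds $\rho_{\mathrm{gen}}$ from above by $10N$; to conclude you would need a member of the $I_n$-stratum itself with $\rho=n+1$, which is exactly what is to be shown. The Torelli/IVHS route is the right idea but is precisely Kloosterman's theorem on higher Noether--Lefschetz loci, which has to be invoked rather than gestured at (and your dimension claim --- that the relevant period space has dimension $10N-n-1$ --- is off for $N>2$, since the horizontal tangent space has dimension $(N-1)(10N-n-1)$). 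The paper's argument is: by Kloosterman every non-isotrivial component of $\mathrm{NL}_{n+2}$ has dimension exactly $10N-n-2$, while the non-isotrivial $I_n$-stratum has dimension $10N-n-1$, so the stratum cannot lie in $\mathrm{NL}_{n+2}$ and its very general member has $\rho=n+1$. You should either cite these two external inputs (Kloosterman; Davenport--Stothers) explicitly or supply proofs; as written the proposal leaves both load-bearing steps open.
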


\begin{remark}
By the theory of Mordell-Weil lattices \cite{MWL}, the assumption on the Picard number implies 
that there are no other reducible fibers and only sections of finite order.
\end{remark}

\begin{proof}[Proof of Proposition \ref{prop:ell_surf}]
By \cite{Miranda}, the elliptic surfaces with $\chi(Y)=12N$ lie
inside a $(10N-2)$-dimensional moduli space $\mathcal M_N$.
For $N=1$, any such $Y$ is rational with $\rho=10$.
For $N>1$, however, a very general $Y$ has $\rho=2$.
This follows from \cite{Kloosterman}
where all irreducible components of the higher Noether-Lefschetz loci
\[
\mathrm{NL}_r =
\{Y\in\mathcal M_N\; | \; \rho(Y)\geq r\}, \;\;\; 2\leq r\leq 10N,
\]
are shown to have dimension $\geq 10N-r$,
with equality attained outside the isotrivial locus.
But an isotrivial fibration admits only additive singular fibers
while a general surface in $\mathcal M_N$ has only fibers of type $I_1$,
so the claim follows.

Along the same lines, any elliptic surface $Y\in\mathcal M_N$
with a singular fiber of type $I_n$ lies in the non-isotrivial locus
of an irreducible component 
\[
Z_{n+1}\subset \mathrm{NL}_{n+1}.
\]
As soon as this component is shown to be non-empty,
it follows from the above discussion that a very general member will have $\rho=n+1$.
The non-emptiness can be proved in a way similar to the argument in \cite[Thm.\ 8.39]{MWL}:
the elliptic surface $Y$ with $I_n$ fiber can be degenerated to one with $I_{n+1}$
and so on all the way to a terminal object with $I_{10N-1}$
(the maximum possible by Lefschetz as discussed before).
The existence of this terminal object (for any $N\in\NN$) follows from work of Davenport \cite{Dav}
and  Stothers \cite{St}.
In turn, this implies that all the intermediate strata of elliptic surfaces with an $I_n$ fiber,
$n<10N-1$, are non-empty as well.
(Compare the deformation style argument in \cite[Lemma 2.4]{MP}).
\end{proof}

We are now in the position to return to Theorem \ref{thm:all_k}.
We start by the case of large $k$.

\begin{proposition} \label{prop:large_k}
Any $\#_k(S^2\times S^3)$, $k\geq 4$,
admits a quasi-regular negative Sasakian structure.
\end{proposition}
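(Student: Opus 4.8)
The strategy is to realize $\#_k(S^2\times S^3)$, for $k\geq 4$, as a Seifert bundle $M\to X$ over a cyclic K\"ahler orbifold $X$ built from an elliptic surface $Y\to\CP^1$ with a fiber of type $I_n$, where $n$ is chosen so that $k=n+1$ works out (recall from Theorem~\ref{thm:H1(M)} that $b_2(M)=k$ forces $b_2(X)=k+1$, and we want no torsion, so all isotropy surfaces should be rational). First I would invoke Proposition~\ref{prop:ell_surf} to pick an elliptic surface $Y$ with $\chi(Y)=12N$ (for a suitable $N$ with $n\leq 10N-1$), a singular fiber $I_n=\Theta_0\cup\cdots\cup\Theta_{n-1}$, section $O$, and Picard number $\rho=n+1$, so that $\mathrm{NS}(Y)$ is generated by $O,F$ and the fiber components, with no other reducible fibers. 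The orbifold $X$ is then obtained by contracting a chain of $(-2)$-curves sitting inside the $I_n$ fiber (or a blown-up configuration thereof), using Proposition~\ref{prop:constr-orbi}, so that $X$ acquires one cyclic quotient singularity and $b_2(X)=n+1=k+1$ as needed; the isotropy divisors $D_i$ will be rational curves (components of fibers, the section, and auxiliary rational curves such as fiber components of other $I_1$ fibers or generic members of a pencil), each with a multiplicity $m_i$, all pairwise coprime.

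**Key steps in order.** (1) Fix $k\geq 4$, set $n$ and $N$, and produce $Y$ via Proposition~\ref{prop:ell_surf}; contract an appropriate chain of rational $(-2)$-curves (possibly after a few blow-ups to create such a chain, as in Proposition~\ref{prop:constr-orbi}) to get the cyclic orbifold $\bar X=X$ with singular set $P$. (2) Compute $H_2(X,\ZZ)$, $H^2(X-P,\ZZ)$, and the relevant intersection form over $\QQ$ using \eqref{eqn:barD2}; verify $H_2(X,\ZZ)\cong\ZZ^{k+1}$ is torsion-free (using $\pi_1(Y)=1$) so that the resulting $M$ has $H_2(M,\ZZ)=\ZZ^k$ with no torsion — here the $D_i$ must be rational, $g_i=0$. (3) Choose rational isotropy curves $D_i$ with pairwise coprime multiplicities $m_i$, and using Proposition~\ref{prop:c1} build a Seifert bundle whose orbifold Chern class $c_1(M)$ is a positive class; arrange, exactly as in the proofs of Propositions~\ref{prop:1-neg-pos-main} and~\ref{prop:2-neg-pos-main} (a B\'ezout/coprimality argument), that $c_1(M/\mu)$ is integral and primitive in $H^2(X-P,\ZZ)$ and that assumption~(2) of Theorem~\ref{thm:H1(M)} (surjectivity onto $\oplus H^2(D_i,\ZZ_{m_i})$) holds, forcing $H_1(M,\ZZ)=0$. (4) Compute $K_X^{\orb}=K_X+\sum(1-\tfrac1{m_i})D_i$ via adjunction; since $Y$ is an elliptic surface with $N>1$ it is of non-negative Kodaira dimension (in fact $K_Y$ is effective, being a sum of $(N-1)$ fibers), so after contracting $(-2)$-curves $K_X$ is still nef-ish, and adding the boundary terms with $m_i,s$ large makes $K_X^{\orb}$ ample; by Proposition~\ref{prop:c1neg} this gives the negative Sasakian structure. (5) Check $\pi_1(M)=1$: show $\pi_1^{\orb}(X)$ is abelian by removing the isotropy curves plus the contracted curves from $Y$ and applying a Nori-type argument (Proposition~\ref{prop:nori}) together with the fact that the contracted $(-2)$-chain kills the local orbifold loop; combined with $H_1(M,\ZZ)=0$, conclude $\pi_1(M)=1$, so $M\cong\#_k(S^2\times S^3)$ by Smale-Barden classification.

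**Main obstacle.** The delicate point is engineering the orbifold $X$ so that \emph{simultaneously} $b_2(X)=k+1$, all isotropy surfaces are \emph{rational} (to kill torsion in $H_2(M)$), $K_X^{\orb}$ can be made ample, and the Chern-class primitivity/surjectivity conditions of Theorem~\ref{thm:H1(M)} can be met with pairwise coprime $m_i$. The tension is between wanting a large $I_n$ fiber (forcing large $N$, hence large $K_Y$, which helps ampleness) and controlling which curves become isotropy surfaces and keeping the intersection-theoretic bookkeeping tractable; choosing to contract a single $(-2)$-chain inside the $I_n$ fiber and using the remaining fiber components plus the section as rational isotropy curves should balance these, but verifying that the map $H^2(X,\ZZ)\to\oplus H^2(D_i,\ZZ_{m_i})$ is onto — i.e.\ that the rational curves $D_i$ have the right "degrees" against the generator of $H^2(X,\ZZ)$, all coprime to the $m_i$ — will require care. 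The small cases $k=1,2,3$ fall outside this elliptic-surface framework and are handled separately in Propositions~\ref{prop:k=1} and~\ref{prop:k=2,3}, presumably via rational or $K3$/Enriques-type surfaces.
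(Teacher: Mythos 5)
Your overall architecture (elliptic surface with an $I_n$ fiber from Proposition~\ref{prop:ell_surf}, contraction of a chain of rational curves via Proposition~\ref{prop:constr-orbi}, Seifert bundle checked against Theorem~\ref{thm:H1(M)}, Nori-type argument for $\pi_1$) matches the paper's, but there is a genuine error in the central piece of numerology. You assert $b_2(X)=n+1=k+1$. This conflates the Picard number with the second Betti number: Proposition~\ref{prop:ell_surf} gives $\rho(Y)=n+1$, but $b_2(Y)=12N-2$, and it is the full Betti number that enters Theorem~\ref{thm:H1(M)} via $k+1=b_2(X)$. The paper contracts the chain $O\cup\Theta_0\cup\cdots\cup\Theta_{n-2}$ of $n$ curves, so $b_2(X)=12N-2-n$ and hence $k=12N-3-n$ (larger $n$ gives \emph{smaller} $k$). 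Realizing every $k\geq 4$ then requires the interval argument: for $N\geq 3$ the values $k+1=12N-2-n$ with $1\leq n\leq 10N-1$ sweep out $[2N-1,12N-3]$, and these intervals cover all integers $\geq 5$. With your identification $k=n$ the construction would actually produce $\#_{k'}(S^2\times S^3)$ for $k'=12N-3-n$, so your scheme as written does not hit the intended values of $k$.

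A secondary difference: you introduce a branch divisor of rational isotropy curves with coprime multiplicities to force $K_X^{\orb}$ ample, which then obliges you to verify orbismoothness, the surjectivity condition~(2) of Theorem~\ref{thm:H1(M)}, and the B\'ezout-type primitivity computation. The paper avoids all of this: there is \emph{no} branch divisor, so $K_X^{\orb}=K_X$, and since $\rho(X)=1$ with $\mathrm{NS}(X)=\ZZ\langle F\rangle$, ampleness reduces to the single adjunction computation $K_X\cdot F=(N-2)n/((N-1)n+1)>0$, which holds precisely for $N>2$ (note: $N>1$ is not enough, contrary to your ``$K_Y$ effective'' heuristic --- for $N=2$ one gets $K_X\cdot F=0$). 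The only condition from Theorem~\ref{thm:H1(M)} that then remains is primitivity of $c_1(M)$ in $H^2(X-P,\ZZ)$, achieved by extending a primitive ample line bundle on $X-P$. Your branch-divisor route could in principle be completed, but it is substantially more work and is not needed once the Betti-number bookkeeping is done correctly.
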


\begin{proof}
Consider first the case $k>1$.
Fix $N>1$ and $n\leq 10N-1$ and 
let $Y$ be an elliptic surface as in Proposition \ref{prop:ell_surf}.
Then $Y$ is simply connected and we have $O^2=-\chi(\mathcal O_Y) = -N$.
Thus $O$ together with the fibre components $\Theta_0,\hdots,\Theta_{n-2}$
form a chain $D$ of $n$ smooth rational curves where all the $\Theta_i$ are $(-2)$-curves.

By Proposition \ref{prop:constr-orbi}, we can contract this chain $D$ to a  point $p$ of order  $d=(N-1)n+1$
on a singular surface $X$. Note that
 $$
 [N,\underbrace{2,\ldots,2}_{(n-1) \text{ times}}]= \frac dn = \frac{(N-1)n+1}{n}\, .
 $$
By construction, we have 
\[
b_2(X) = 12N-2-n, \;\;\; \text{ and } \;\;\;
\rho(X)=1, \;\;\; \text{ where } \;\;\; \mathrm{NS}(X) = \mathbb Z \la F\ra.
\]
Since the fiber $F$ contains $p$, we have by (\ref{eqn:barD2}) that  $F^2=\frac{n}{d}$. 
We continue to prove that $K_X^{\orb}>0$.
To this end, it suffices to prove that $K_X^{\orb}\cdot F>0$, since $\rho(X)=1$.
But the latter can be computed using adjunction through
\[
K_X^{\orb}\cdot F + F^2 = -\chi_{\orb}(F) = 1-\frac 1d,
\]
since $F$ is orbismooth. Therefore
 $$
 K_X^{\orb}\cdot F= \frac{d-n-1}{d}=\frac{(N-2)n}{(N-1)n+1}\, ,
 $$
so the claim follows as soon as $N>2$.
For such $N$, and $1\leq n\leq 10N-1$, we have
$b_2(X)=k+1=12N-2-n \in [2N-1 ,12N-3]$. For $N\geq 3$ this covers
all $k+1\geq 5$, that is $k\geq 4$.

We construct a Seifert bundle $M\rightarrow X$ according to the general procedure as in Proposition \ref{prop:c1}. Note that there is no branch divisor and a singular point of order $d=(N-1)n+1$. We need to ensure that the assumptions of Theorem \ref{thm:H1(M)} are satisfied. 
In this case, we only need to check that $c_1(M)$ is primitive in $H^2(X-P,\mathbb{Z})$. We just take a line bundle $M\rightarrow X-P$, whose first Chern class is ample and primitive, and extend it over $X$ as a Seifert bundle.

Finally we claim that $\pi_1^{\orb}(X)=1$.
To see this, note that $\pi_1^{\orb}(X)$ is a quotient of $\pi_1(X- \{p\}) = \pi_1(Y-D)$.
Here the last group is generated by a small loop $\beta$ around $D$,
so $\pi_1^{\orb}(X)$ is
cyclic generated by $\beta$, and hence abelian, so the claim follows as before. 
Hence we have a quasi-regular negative Sasakian structures on $M$, which is diffeomorphic to  $\#_k(S^2\times S^3)$, by 
the classification of Smale-Barden manifolds and Theorem \ref{thm:w2}. By construction, this can be done  
for all $k\geq 4$.
\end{proof}

In order to complete Theorem \ref{thm:all_k}, we are left with the cases $k=1,2,3$. We start with $k=1$.

\begin{proposition}\label{prop:k=1} 
The $5$-manifold $S^2\times S^3$ admits a negative Sasakian structure.
\end{proposition}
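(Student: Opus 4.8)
The plan is to realise $S^2\times S^3=\#_1(S^2\times S^3)$ as the total space of a Seifert bundle $\pi\colon M\to X$ over a \emph{smooth} cyclic K\"ahler orbifold $X$ whose underlying surface has $b_2(X)=2$ and whose orbifold canonical class $K_X^{\orb}$ is ample. By Theorem~\ref{thm:H1(M)}, once the three numerical conditions there hold one has $H_1(M,\ZZ)=0$ and $H_2(M,\ZZ)=\ZZ^k\oplus\bigoplus_i\ZZ_{m_i}^{2g_i}$ with $k+1=b_2(X)$; so in order to land on $H_2(M,\ZZ)=\ZZ$ I must keep $b_2(X)=2$ and use only genus-$0$ isotropy curves. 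Ampleness of $K_X^{\orb}$ then yields a negative Sasakian structure by Proposition~\ref{prop:c1neg} (with $c_1(M)$ ample, so that $M$ is Sasakian at all by Proposition~\ref{prop:c1}).

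Concretely I would take $X=\CP^1\times\CP^1$, with the two ruling classes $f_1,f_2\in H^2(X,\ZZ)$ ($f_i^2=0$, $f_1\cdot f_2=1$), and branch divisor $\Delta=\sum_{i=1}^s(1-\frac1{m_i})D_i$, where the $D_i$ are general members of the very ample system $|f_1+f_2|$ and $m_1,\dots,m_s$ are pairwise coprime (e.g.\ $s=4$ and $(m_i)=(2,3,5,7)$). Each $D_i$ is a smooth conic on the quadric, hence $D_i\cong\CP^1$ with $g(D_i)=0$ and $D_i^2=2$; general members meet pairwise transversally and nicely, and since they do intersect, pairwise coprimality of the $m_i$ is exactly the hypothesis of Proposition~\ref{prop:sympl-orbi}. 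Here $X$ is smooth, so there are no isolated orbifold points, $P=\emptyset$, and $H^2(X-P,\ZZ)=H^2(X,\ZZ)=\ZZ\la f_1,f_2\ra$. From $K_X=-2f_1-2f_2$ one gets
\[
K_X^{\orb}=\Big(\sum_{i=1}^s\big(1-\tfrac1{m_i}\big)-2\Big)(f_1+f_2),
\]
which is ample precisely when $\sum_i(1-\frac1{m_i})>2$; this already holds for $(m_i)=(2,3,5,7)$, and for every larger such system.

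It remains to build the Seifert bundle and check the three conditions of Theorem~\ref{thm:H1(M)}. Condition~(1) is clear since $X$ is simply connected. For~(2), the map $H^2(X,\ZZ)\to\bigoplus_iH^2(D_i,\ZZ_{m_i})=\bigoplus_i\ZZ_{m_i}$ is $af_1+bf_2\mapsto(a+b\bmod m_i)_i$, which is onto by the Chinese Remainder Theorem as the $m_i$ are pairwise coprime. For~(3), apply Proposition~\ref{prop:c1}: for local invariants $j_i$ (equivalently integers $0<b_i<m_i$ with $\gcd(b_i,m_i)=1$ and $b_ij_i\equiv1\pmod{m_i}$) and a line bundle $\mathcal B$ on $X$, one has $c_1(M)=c_1(\mathcal B)+\sum_i\frac{b_i}{m_i}(f_1+f_2)$, hence with $\mu=\prod_i m_i$
\[
c_1(M/\mu)=\mu\,c_1(M)=\mu\,c_1(\mathcal B)+S\,(f_1+f_2),\qquad S:=\sum_i b_i\,\tfrac{\mu}{m_i}.
\]
As in the proof of Proposition~\ref{prop:1-neg-pos-main}, one first picks each $b_i$ with $b_i\frac{\mu}{m_i}\equiv1\pmod{m_i}$ (which automatically gives $\gcd(b_i,m_i)=1$), so that $S\equiv1\pmod\mu$, and then takes $c_1(\mathcal B)=f_2$, giving $c_1(M/\mu)=Sf_1+(S+\mu)f_2$. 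This is primitive in $\ZZ\la f_1,f_2\ra$ since $\gcd(S,S+\mu)=\gcd(S,\mu)=1$, and both of its coefficients are positive, so $c_1(M)$ is represented by a K\"ahler form and $M$ is Sasakian. By Theorem~\ref{thm:H1(M)} we then obtain $H_1(M,\ZZ)=0$ and $H_2(M,\ZZ)=\ZZ$ (all $g_i=0$, $b_2(X)=2$), and by Proposition~\ref{prop:c1neg} this Sasakian structure is negative.

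Finally, $\pi_1^{\orb}(X)$ is a quotient of $\pi_1(\CP^1\times\CP^1-\bigcup_iD_i)$, which is abelian by Nori's theorem (Proposition~\ref{prop:nori}), applicable precisely because $D_i^2=2>0$; combined with $H_1(M,\ZZ)=0$ this forces $\pi_1(M)=1$. Deforming the negative Sasakian structure to an $\eta$-Einstein one and invoking Theorem~\ref{thm:w2} shows $M$ is spin, so the Smale--Barden classification identifies the simply connected spin $5$-manifold $M$ with $H_2(M,\ZZ)=\ZZ$ as $S^2\times S^3$. I expect the only genuine work, and the main place one could slip, to be the simultaneous arithmetic in conditions~(2) and~(3) together with ampleness of $K_X^{\orb}$; the one conceptual point is that one must \emph{not} contract any curve (that would drop $b_2(X)$ to $1$) and must use branch curves of strictly positive self-intersection so that Nori's theorem applies, which is why the rulings themselves cannot be put in the branch divisor.
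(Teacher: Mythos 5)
Your proof is correct and follows essentially the same route as the paper: a smooth orbifold structure on a rational surface with $b_2=2$, branch divisor of transversal rational curves of positive self-intersection with pairwise coprime large multiplicities, ampleness of $K_X^{\orb}$, the conditions of Theorem~\ref{thm:H1(M)}, and Nori's theorem for $\pi_1^{\orb}$. The only difference is the choice of base, $\CP^1\times\CP^1$ with conics in $|f_1+f_2|$ instead of the paper's $\CP^2\#\overline{\CP}^2$ with proper transforms of conics through the blown-up point; your arithmetic for primitivity and ampleness checks out.
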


\begin{proof} 
Let $X=\CP^2\#{\overline{\CP}^2}$ be the blow-up of $\CP^2$ at a point $p$. 
The cohomology $H^2(X,\ZZ)=\ZZ\langle L, E\rangle$ is generated by a line $L\subset \CP^2$ and an exceptional divisor $E$. A standard calculation yields $K_X=-3L+E$. The ample cone is generated by $L-E,L$, and the effective cone is generated by $L-E,E$. 
Take a collection of general conics in $\CP^2$ through $p$, and denote $C_1, \ldots ,C_s \subset X$ the proper transforms. 
In homology, $[C_i]=2L-E$ and $C_i,C_j$ intersect transversally at $3$ points (outside the exceptional divisor).
Introduce a structure of a smooth orbifold on $X$ by choosing a branch divisor
  $$
  \Delta=\sum_{i=1}^s \left(1-\frac{1}{m_i}\right) C_i,
  $$
where $m_i$ are pairwise coprime integer numbers.
By the general formula for the orbifold canonical class we get
  \begin{equation} \label{eqn:can-class} 
  K_X^{\orb}=-3L+E+\sum_{i=1}^s \left(1-\frac{1}{m_i}\right)(2L-E).
 \end{equation}

As in all previous considerations we use  Proposition \ref{prop:c1} and Theorem \ref{thm:H1(M)}, 
and construct a Seifert bundle determined by the orbifold structure on $X$ and an orbifold Chern class $c_1(M)=[\omega]$.  This will prove the Proposition provided that $K_X^{\orb}>0$. This condition is checked by a direct calculation of the inner product of $K_X$ with elements of the effective cone:
  $$
  K_X^{\orb}\cdot (L-E)>0, \;\;\; K_X^{\orb}\cdot E>0.
  $$
Substituting (\ref{eqn:can-class}) to these inequalities, we see that they are satisfied for sufficiently large $s$ and $m_i$. 
For example, the first inequality yields $-2+\mathop{\sum}\limits_{i=1}^s (1-\frac{1}{m_i})>0$, so $s\geq 3$ and large $m_i$ will do. 
The second equality says $-1+\mathop{\sum}\limits_{i=1}^s (1-\frac{1}{m_i})>0$.

Consider the Seifert bundle $M\rightarrow X$ determined by the local invariants $(m_i,b_i)$ and the orbifold first Chern class  
  \begin{equation*} 
  c_1(M)=c_1(B)+\sum_{i=1}^s\frac{b_i}{m_i}[C_i].
  \end{equation*}
We need to check assumptions (2) and (3) of Theorem \ref{thm:H1(M)}. Since $C_i\cdot E=1$, the canonical map $H^2(X,\ZZ)
\rightarrow H^2(C_i,\ZZ)$ sends $[E]\rightarrow E\cdot C_i=1$, therefore the induced map
$H^2(X,\ZZ)\rightarrow H^2(C_i,\ZZ_{m_i})$ is onto. As $m_i$ are coprime, we see that (2) is satisfied. 
For checking assumption (3), 
note that the K\"ahler forms are $[\omega]=a_1(L-E)+a_2L$, 
$a_1,a_2>0$. Take a line bundle $B$ and write $c_1(B)=\beta_1(L-E)+\beta_2L$, $\beta_1,\beta_2\in \ZZ$, and calculate
  \begin{equation}\label{eqn:chern=1} 
   c_1(M)= 
 \left(\beta_1+ \sum \frac{b_i}{m_i}\right)[L-E ] + \left(\beta_2+ \sum \frac{b_i}{m_i}\right)[L ].
  \end{equation}
This is a K\"ahler form if we arrange both coefficients positive. For the first one we take $\beta_1=0$.
Now $\mu=m_1\cdots m_s$ and so the second coefficient of $c_1(M/\mu)=\mu c_1(M)$ is adjusted to be
 \begin{equation}\label{eqn:prim} 
 \beta_2 m_1\cdots m_s + \sum_{i=1}^s b_i m_1\cdots\hat m_i\cdots m_s =1.
 \end{equation}
This can be solved since the $m_i$ are pairwise coprime. Note that we can take $0<b_i<m_i$, $\gcd(b_i,m_i)=1$.
Hence $c_1(M)$ can be represented by a K\"ahler form and $c_1(M/\mu)$ is primitive.

Finally, we need to show that $\pi_1^{\orb}(X)$ is trivial, which is proved applying Proposition \ref{prop:nori}, because $C_i^2>0$. 
We complete the proof noting that $M=S^2\times S^3$ by Theorem \ref{thm:H1(M)}, the classification of Smale-Barden manifolds and Theorem \ref{thm:w2}.
 \end{proof}

\begin{proposition}\label{prop:k=2,3} 
The Smale-Barden manifolds $\#_2(S^2\times S^3)$ and $\#_3(S^2\times S^3)$ admit negative Sasakian structures.
\end{proposition}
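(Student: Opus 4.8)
The plan is to follow closely the construction in the proof of Proposition \ref{prop:k=1}, now taking the base to be $X=\CP^2\#k\overline{\CP}^2$ for $k\in\{2,3\}$, i.e.\ the blow-up of $\CP^2$ at $k$ points $p_1,\dots,p_k$ in general position, so that $X$ is a del Pezzo surface with $H^2(X,\ZZ)=\ZZ\langle L,E_1,\dots,E_k\rangle$, $K_X=-3L+\sum_j E_j$, and Mori cone generated by the finitely many $(-1)$-curves, namely the $E_j$ and the classes $L-E_i-E_j$. For each $j$ I would fix $s_j\ge 2$ general smooth conics through $p_j$ and let the $D_i$ range over the proper transforms of all of them; each such $D_i$ is a smooth rational curve of class $2L-E_j$ with $D_i^2=3$, and for generic choices the $D_i$ meet each other transversally and away from the $p_\ell$. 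Assigning pairwise coprime multiplicities $m_i\ge 3$ and setting $\Delta=\sum_i(1-\tfrac1{m_i})D_i$, Proposition \ref{prop:sympl-orbi} endows $X$ with the structure of a smooth cyclic K\"ahler orbifold.

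Writing $I_j$ for the set of indices of conics through $p_j$ and $t_j=\sum_{i\in I_j}(1-\tfrac1{m_i})$, the general formula for the orbifold canonical class gives $K_X^{\orb}=K_X+\Delta=\bigl(-3+2\sum_j t_j\bigr)L+\sum_j(1-t_j)E_j$, whence $K_X^{\orb}\cdot E_j=t_j-1$ and $K_X^{\orb}\cdot(L-E_i-E_j)=-1+t_i+t_j+2t_\ell$ (the last summand being the remaining index when $k=3$, and absent when $k=2$). Since $X$ is del Pezzo, ampleness of $K_X^{\orb}$ is equivalent to positivity against every $(-1)$-curve, and the displayed quantities are all positive once each $s_j\ge 2$ and the $m_i$ are large enough. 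By Proposition \ref{prop:c1neg} any Seifert bundle $M\to X$ built from this orbifold will then carry a negative Sasakian structure.

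I would then construct $M\to X$ as in Proposition \ref{prop:c1}, with $c_1(M)=c_1(B)+\sum_i\tfrac{b_i}{m_i}[D_i]$ and $c_1(B)=c_0L+\sum_j c_jE_j\in H^2(X,\ZZ)$, and verify the three conditions of Theorem \ref{thm:H1(M)}. Condition (1) is immediate since $X$ is simply connected. For (2), each $D_i$ through $p_j$ satisfies $E_j\cdot D_i=1$, so $H^2(X,\ZZ)\to H^2(D_i,\ZZ_{m_i})$ is onto; as the $m_i$ are pairwise coprime, the image of the total map exhausts the cyclic group $\bigoplus_i\ZZ_{m_i}$. The only delicate point is (3). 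Grouping the conics through each $p_j$ and writing $A_j=\sum_{i\in I_j}b_i(\mu/m_i)\in\ZZ$ with $\mu=\prod_i m_i$, one has $\mu\,c_1(M)=\bigl(\mu c_0+2\sum_j A_j\bigr)L+\sum_j(\mu c_j-A_j)E_j$. I would first choose the local invariants $b_i$, $\gcd(b_i,m_i)=1$, so that $A_j\equiv g_j\pmod\mu$ for every $j$, where $g_j:=\prod_{i\notin I_j}m_i$; this is possible because $g_j\mid A_j$ automatically, while modulo $\prod_{i\in I_j}m_i$ the quotient $A_j/g_j$ can be made any prescribed unit by adjusting the $b_i$ with $i\in I_j$. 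Then I would take $c_j=(A_j-g_j)/\mu\in\ZZ$ and $c_0$ so large that $\lambda:=\mu c_0+2\sum_j A_j$ is a large positive integer, obtaining $c_1(M/\mu)=\mu\,c_1(M)=\lambda L-\sum_j g_jE_j$. Since the $I_j$ are pairwise disjoint, each prime dividing some $m_i$ fails to divide exactly one of the $g_j$, so $\gcd(g_1,\dots,g_k)=1$ and $c_1(M/\mu)$ is primitive in $H^2(X,\ZZ)$; moreover $(\lambda L-\sum_j g_jE_j)\cdot E_j=g_j>0$ and $(\lambda L-\sum_j g_jE_j)\cdot(L-E_i-E_j)=\lambda-g_i-g_j>0$ for $\lambda$ large, so $c_1(M/\mu)$, hence $c_1(M)$, is ample. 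Thus $c_1(M)$ is represented by an orbifold K\"ahler form and $M$ is Sasakian by Proposition \ref{prop:c1}, while Theorem \ref{thm:H1(M)} yields $H_1(M,\ZZ)=0$ and, since all $D_i$ are rational, $H_2(M,\ZZ)=\ZZ^k$.

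Finally $\pi_1^{\orb}(X)$ is abelian by Proposition \ref{prop:nori} applied to $X\setminus\bigcup_i D_i$ (the $D_i$ satisfy $D_i^2=3>0$), so $\pi_1(M)=1$ because $H_1(M,\ZZ)=0$. Hence $M$ is a simply connected $5$-manifold with torsion-free $H_2(M,\ZZ)=\ZZ^k$, and $M=\#_k(S^2\times S^3)$ by Theorem \ref{thm:H1(M)}, the classification of Smale-Barden manifolds and Theorem \ref{thm:w2}. I expect the genuine obstacle to be the number-theoretic bookkeeping in condition (3) --- arranging $c_1(M/\mu)$ to be simultaneously primitive and ample when $H^2(X,\ZZ)$ has rank $k+1>2$, so that the branch divisor classes no longer span everything "coordinate by coordinate" as in the $k=1$ case; the remaining steps are routine adaptations of Proposition \ref{prop:k=1}.
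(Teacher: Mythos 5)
Your proof is correct, and while it follows the same overall template as the paper (orbifold branch divisor on a del Pezzo blow-up of $\CP^2$, ampleness of $K_X^{\orb}$ against the generators of the effective cone, the three conditions of Theorem \ref{thm:H1(M)}, Nori's theorem for $\pi_1^{\orb}$), your implementation differs in two substantive ways. First, the paper uses exactly three conics, each passing through \emph{two} of the three blown-up points (classes $2L-E-E'$, etc.), whereas you use at least two conics through each \emph{single} point (classes $2L-E_j$); both configurations make $K_X^{\orb}$ ample for large $m_i$, and yours has the advantage of treating $k=2$ and $k=3$ uniformly rather than leaving $k=2$ to the reader. Second, and more interestingly, your handling of condition (3) is different from (and more robust than) the paper's: the paper normalizes the $L$-coefficient of $c_1(M/\mu)$ to $1$ "in a similar vein to (\ref{eqn:prim})", which secures primitivity but sits uneasily with the ampleness inequalities (\ref{eqn:ampleness}) when the $E$-coefficients are nonzero; you instead decouple the two requirements by steering the $E_j$-coefficients to $-g_j$ with $g_j=\prod_{i\notin I_j}m_i$, getting primitivity for free from $\gcd(g_1,\dots,g_k)=1$, and then making the $L$-coefficient arbitrarily large to guarantee ampleness. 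The arithmetic you flag as the "genuine obstacle" is handled correctly: $g_j\mid A_j$ automatically, $A_j/g_j$ can be set to $1$ modulo $\prod_{i\in I_j}m_i$ by CRT while keeping $\gcd(b_i,m_i)=1$, and the freedom to choose $c_0,c_j$ is exactly the $H^2(X,\ZZ)$-torsor structure of Proposition \ref{prop:c1}. So your argument is a valid, self-contained alternative proof.
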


\begin{proof} 
The steps of the proof are similar to those in Proposition \ref{prop:k=1}, 
so we only explain the necessary modifications. For the case $\#_3(S^2\times S^3)$, we 
take $X=\CP^2\#3\overline{\CP}^2$, by blowing-up $\CP^2$ at three different non-collinear points. 
We write $H^2(X,\ZZ)=\ZZ\langle L, E,E',E''\rangle$, where $E,E',E''$ denote the exceptional divisors.
Now the effective cone is generated by
   $$
   E,E',E'' \,\,\text{and}\,\,L-E-E', L-E-E'', L-E'-E''
   $$
 (these are classes of the proper transforms of the lines through each two of the chosen blow-up points). Thus, the conditions ensuring ampleness of a cohomology class $aL-bE-cE'-dE''$ can be calculated by taking the inner product with generators of the effective cone. 
 We get the ampleness conditions as the inequalities:
\begin{equation}\label{eqn:ampleness}
b>0,\;\; c>0,\;\; d>0, \;\; a>b+c,\;\; a>c+d, \;\; a>d+b.
\end{equation}

Take the strict transforms  $C_1,C_2,C_3$ of $3$ conics forming a configuration such that each conic passes through two of the three chosen points, but no two conics pass through exactly the same points
(and the other 3 intersections are transverse and generic). This yields
 \begin{equation} \label{eqn:ample3} 
 [C_1]=2L-E-E', \;\;\; [C_2]=2L-E' - E'', \;\;\; [C_3]=2L-E-E''.
 \end{equation}
Endow $X$ with a structure of a smooth orbifold with branch divisor
 $$
  \Delta=\left(1-\frac{1}{m_1}\right)C_1+\left(1-\frac{1}{m_2}\right)C_2+\left(1-\frac{1}{m_3}\right)C_3.
  $$
As $K_X^{\orb} =-3L+E+E'+E''+ \Delta$, a direct calculation using (\ref{eqn:ample3}) yields
 $$
 K_X^{\orb}=\Big(3-2\sum_{i=1}^3\frac{1}{m_i}\Big)L-
 \Big(1-\frac{1}{m_2}-\frac{1}{m_3}\Big)E-\Big(1-\frac{1}{m_1}-\frac{1}{m_3}\Big)E'-\Big(1-\frac{1}{m_1}-\frac{1}{m_2}\Big)E''.
 $$
Comparing this with (\ref{eqn:ampleness}) we see that the ampleness condition is satisfied for sufficiently large $m_i$ so that
 $$
 \frac{1}{m_1}+\frac{1}{m_2}<1,\;\;\;  \frac{1}{m_1}+\frac{1}{m_3}<1,\;\;\; \frac{1}{m_2}+\frac{1}{m_3}<1.
 $$
The coprimality of $m_i$ ensures that $H^2(X,\ZZ)\rightarrow H^2(C_i,\ZZ_{m_i})$ is surjective,  in the same way as in the proof of Proposition \ref{prop:k=1}.
 
Next we need to arrange that $\mu c_1(M)$ is primitive and ample (represented by a 
K\"ahler  form). For this take $c_1(B)=\beta_1 [L]$, and solve so that
the coefficient of $L$ in $\mu\, c_1(M)$ is $1$ in a similar vein to (\ref{eqn:prim}). 
Finally, the orbifold fundamental group is calculated as before, using $ C_i^2>0$. As before, we know that $M=\#_3(S^2\times S^3)$.

The case $\#_2(S^2\times S^3)$ is similar but easier, and it is left to the reader.
\end{proof}

Finally, we just recall that Propositions \ref{prop:large_k}, \ref{prop:k=1} and \ref{prop:k=2,3} complete the proof of Theorem \ref{thm:all_k}.

The constructions of
Propositions \ref{prop:k=1} and \ref{prop:k=2,3} yield semi-regular Sasakian structures. It is likely that an 
extension of this construction may cover also the cases $k\geq 4$, although it can be more involved.
On the other hand, the construction in 
Proposition \ref{prop:large_k} yields quasi-regular Sasakian structures with no branch divisor.

It is natural to ask the following:
\begin{question}
 Do the manifolds $\#_k (S^2\times S^3)$ admit {\it regular} negative Sasakian structures?
 \end{question}

As mentioned in the introduction, the values $k=(d-2)(d^2-2d+2)+1$, $d\geq 5$, $k=7,12,20$ are known.
We have some more cases.

\begin{theorem}\label{thm:low-betti} 
The $5$-manifold $\#_k(S^2\times S^3)$ admits a regular negative Sasakian structures
for $k=5,6,7, 8$ and $k=13$, $15\leq k\leq 20$.
\end{theorem}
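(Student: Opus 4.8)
The plan is to realize each such $\#_k(S^2\times S^3)$ as the total space of a Boothby-Wang circle bundle $\pi\colon M\to X$ over a smooth minimal surface of general type $X$ whose canonical bundle $K_X$ is ample (equivalently, $X$ has no $(-2)$-curves, i.e.\ it coincides with its smooth canonical model). Since there is no branch divisor one has $K_X^{\orb}=K_X$, so by Theorem~\ref{thm:sas-orb} together with Proposition~\ref{prop:c1neg} the circle bundle $M\to X$ with Euler class an integral K\"ahler class $e=[\omega]\in H^2(X,\ZZ)$ carries a regular negative Sasakian structure as soon as $K_X$ is ample. Thus the problem reduces entirely to finding appropriate surfaces $X$.

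First I would record the topological dictionary. For $X$ simply connected (hence $q(X)=0$), the Gysin sequence of $\pi$ gives $H^1(M,\ZZ)=0$ and $H^2(M,\ZZ)=\coker\!\big(\ZZ\xrightarrow{\,\cdot e\,}H^2(X,\ZZ)\big)$, which equals $\ZZ^{\,b_2(X)-1}$ and is torsion free once $e$ is chosen \emph{primitive}; and the exact sequence $\ZZ=\pi_1(S^1)\to\pi_1(M)\to\pi_1(X)=1$ forces $\pi_1(M)=1$. So $M$ is a simply connected $5$-manifold with $H_2(M,\ZZ)=\ZZ^{\,k}$, $k=b_2(X)-1$. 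As $M$ admits a negative Sasakian structure it is spin by Theorem~\ref{thm:w2} (alternatively, spinness is immediate from $w_2(M)=\pi^*w_2(X)$ once $e$ is taken in a suitable class modulo $2$, which is possible because $K_X$ is ample), and therefore by the classification of Smale-Barden manifolds $M\cong\#_k(S^2\times S^3)$.

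It then remains to exhibit, for each target value of $k$, a simply connected minimal surface of general type with ample $K_X$ and $b_2(X)=k+1$. By Noether's formula with $q(X)=0$ one has $b_2(X)=10+12\,p_g(X)-K_X^2$, so it suffices to produce such $X$ with $q=0$ and either $p_g=0$ and $K_X^2\in\{4,3,2,1\}$ (yielding $k=5,6,7,8$), or $p_g=1$ and $K_X^2\in\{8,6,5,4,3,2,1\}$ (yielding $k=13$ and $15\le k\le 20$). Simply connected surfaces of general type with $p_g=q=0$ and $K^2\le 4$, and with $p_g=1$, $q=0$ and the listed values of $K^2$, are provided by the literature on the geography of surfaces of general type; in each of the relevant families a general member carries no $(-2)$-curve, hence has ample canonical class, which is the only additional feature we need. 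Taking $e$ to be a primitive ample class on such an $X$ and forming the associated circle bundle finishes the proof.

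I expect the main obstacle to lie precisely in this last step: pinning down references for \emph{simply connected} surfaces of general type with the prescribed small invariants and with \emph{ample} (not merely nef and big) canonical bundle, which is a delicate chapter of surface geography. It is also the reason the statement omits $k=14$ and $k=9,10,11$: the surfaces these would require ($p_g=1$, $q=0$, $K^2=7$, respectively $p_g=1$, $q=0$, $K^2\ge 10$) are not known to exist in the simply connected case, whereas $k=12$ is already covered by the link construction recalled in the introduction. Everything else — the Gysin computation, the bookkeeping for torsion and for $\pi_1$, and the identification of $M$ via Theorem~\ref{thm:w2} and the Smale-Barden classification — is routine given the results assembled above.
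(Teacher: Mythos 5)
Your proposal is correct and follows essentially the same route as the paper: a Boothby--Wang circle bundle over a simply connected surface of general type with ample canonical class, with Noether's formula translating each target $k$ into exactly the pairs $(p_g,K^2)$ you list, and Theorem~\ref{thm:w2} plus the Smale--Barden classification identifying $M$ with $\#_k(S^2\times S^3)$. The paper fills in the references you leave open (the Barlow surface for $k=8$, the constructions of \cite{LP} and \cite{PPS} for $p_g=0$, $K^2\le 4$, and \cite{PPS1} for $p_g=1$, $q=0$, $K^2=1,\dots,6,8$), and your insistence on ampleness of $K_X$ rather than mere minimality is a legitimate point that the paper's write-up passes over rather quickly.
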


\begin{proof}
By \cite[Lemma 9.10]{Ham} and \cite[Theorem 9.12]{Ham}, if $X$ is a simply connected $4$-manifold and 
$M\rightarrow X$ be a circle bundle over $X$ with primitive Euler class $e$ and
$w_2(X)\equiv 0,e \pmod2$,  then $M=\#_k(S^2\times S^3)$, where $k=b_2(X)-1$.
Therefore it is sufficient  to construct a simply connected 
smooth complex surface of general type
 and with Betti numbers $b_2(X)=k+1$ (compare the discussion in \cite[Example 10.4.6]{BG}). 
 This is because $K_X>0$, and Theorem \ref{thm:w2} applies. In particular, there is no need to check the condition $w_2(X)\equiv 0,e \pmod 2$. The case $k=8$ is already known: $X$ is the Barlow surface (see \cite[Example 10.4.6]{BG}).
The surfaces with $b_2(X)=6,7,8$ can be found in \cite{LP} and \cite{PPS}. This can be seen directly from the construction of these surfaces, 
because they are constructed using rational blow-down theory and $\mathbb{Q}$-Gorenstein smoothing theory. 
In more detail: in \cite{LP} and \cite{PPS} the authors construct simply connected smooth complex surfaces of general type with $p_g=0$ and $K_X^2=1,2,3,4$. Since $X$ is simply connected, the Noether formula  takes the form
 $$
 \frac{K_X^2+\chi(X)}{12}=1,
 $$
and one obtains $b_2(X)=\chi(X)-2=6,7,8,9$.
 
The cases $k=13$ and $15\leq k\leq 20$ follow from \cite{PPS1} in the same way. In greater detail, \cite{PPS1} yields examples of smooth
simply connected surfaces of general type $X$ with invariants $K_X^2=1,2,\ldots ,6,8$ and $p_g=1,q=0$. 
Again, the Noether formula yields the desired values of $b_2(X)=k+1$ and $b_2(M)=k$, 
where $M$ is the total space of the circle bundle whose first Chern class is the cohomology class of the K\"ahler form $[\omega]$. 
\end{proof}

\end{document}